\numberwithin{equation}{section}
\newtheoremstyle{plainsc}
{\topsep}
{\topsep}
{\itshape}
{}
{\large\scshape}
{}
{5pt plus 1pt minus 1pt}
{\thmname{#1}\thmnumber{ #2}\thmnote{ (#3)}.}
\newtheoremstyle{definitionsc}
{}
{}
{\normalfont}
{}
{\large\scshape}
{ }
{ }
{\thmname{#1}\thmnumber{ #2}\thmnote{ (#3)}.}
\newtheoremstyle{remarksc}
{0.5\topsep}
{0.5\topsep}
{\normalfont}
{}
{\large\itshape}
{}
{ }
{\thmname{#1}\thmnumber{ #2}\thmnote{ (#3)}.}
\theoremstyle{plainsc}
\newtheorem{theorem}{Theorem}[section]
\newtheorem{lemma}[theorem]{Lemma}
\theoremstyle{definitionsc}
\newtheorem{example}[theorem]{Example}
\newtheorem{definition}[theorem]{Definition}
\theoremstyle{remarksc}
\newtheorem{remark}[theorem]{Remark}
\newcommand{\dbar}{{\mkern3mu\mathchar'26\mkern-12mu \mathrm{d}}}
\newcommand{\ir}{\mathrm{i}}
\newcommand{\er}{\mathrm{e}}
\newcommand{\dr}{\mathrm{d}}
\begin{document}

\title{Geometric wave propagator on Riemannian manifolds}
\author{Matteo Capoferri\thanks{MC:
Department of Mathematics,
University College London,
Gower Street,
London WC1E~6BT,
UK;
(current address) 
Maxwell Institute for Mathematical Sciences, Edinburgh \emph{and}
Department of Mathematics, Heriot-Watt University,
Edinburgh EH14 4AS, UK;
m.capoferri@hw.ac.uk,
\url{https://mcapoferri.com}
}
\and
Michael Levitin\thanks{ML:
Department of Mathematics and Statistics,
University of Reading,
Pepper Lane,
Whiteknights,
Reading RG6~6AX,
UK;
M.Levitin@reading.ac.uk,
\url{http://www.michaellevitin.net}
}
\and
Dmitri Vassiliev\thanks{DV:
Department of Mathematics,
University College London,
Gower Street,
London WC1E~6BT,
UK;
D.Vassiliev@ucl.ac.uk,
\url{http://www.ucl.ac.uk/\~ucahdva/}
}}

\renewcommand\footnotemark{}

\date{17 July 2023}

\maketitle
\begin{abstract}
We study the propagator of the wave equation on a closed Riemannian manifold $M$. We propose a geometric approach to the construction of the propagator as a single oscillatory integral global both in space and in time with a distinguished complex-valued phase function.
This enables us to provide a global invariant definition of the full symbol of the propagator --- a scalar function on the cotangent bundle --- and an algorithm for the explicit calculation of its homogeneous components.
The central part of the paper is devoted to the detailed analysis of the subprincipal symbol; in particular, we derive its explicit small time asymptotic expansion. We present a general geometric construction that allows one to visua\-lise obstructions due to caustics and describe their circumvention with the use of a complex-valued phase function.
We illustrate the general framework with explicit examples in dimension two.

\

{\bf Keywords:} wave propagator, global Fourier integral operators, Weyl coefficients.

\

{\bf MSC classes: }
primary 35L05; secondary 58J40, 58J45, 35P20.

\end{abstract}

\tableofcontents

\section{Statement of the problem}

Let $(M,g)$ be a connected closed Riemannian manifold of dimension $d\ge2$.
We denote local coordinates on $M$ by $x^\alpha$, $\alpha=1,\ldots,d$.
The $L^2$ inner product on complex-valued functions is defined as
\begin{equation*}
\label{inner product}
(u,v):=\int_M\overline{u(x)}\,v(x)\,\rho(x)\,\dr x\,,
\end{equation*}
where
\begin{equation}\label{riemannian density}
\rho(x):=\sqrt{\det g_{\mu \nu}(x)}
\end{equation}
and $\dr x=\dr x^1\dots \dr x^d\,$.
The Laplace--Beltrami operator on scalar functions is
\begin{equation}
\label{laplacian on functions}
\Delta= \rho(x)^{-1} \dfrac{\partial}{\partial x^\mu} \,\rho(x)\,g^{\mu\nu}(x)\,\dfrac{\partial}{\partial x^\nu}\,.
\end{equation}
Here and further on we adopt Einstein's summation convention over repeated indices.
 
It is well known that the operator \eqref{laplacian on functions} is non-positive
and has discrete spectrum accumulating to~$-\infty$. We adopt the following notation
for the eigenvalues and normalised eigenfunctions of $-\Delta$,
\begin{equation*}
-\Delta v_k=\lambda_k^2 v_k\,,
\end{equation*}
where eigenvalues are enumerated with account of their multiplicity as
\[
0=\lambda_0<\lambda_1\le\lambda_2\le\ldots\le\lambda_k\le\ldots\to+\infty.
\]

Consider the Cauchy problem for the wave equation
\begin{subequations}
\begin{equation}
\label{wave equation}
\left(
\frac{
\partial^2}{\partial t^2}
-
\Delta
\right)
f(t,x)=0\,,
\end{equation}
\begin{equation}
\label{wave equation initial condition}
f(0,x)=f_0(x),
\qquad
\frac{
\partial f}{\partial t}(0,x)=f_1(x).
\end{equation}
\end{subequations}
Functional calculus allows one to write the solution of
\eqref{wave equation},
\eqref{wave equation initial condition}
as
\begin{equation}
\label{wave equation solution of Cauchy problem}
f=
\cos\bigl(t\,\sqrt{-\Delta}\,\bigr)
\,f_0
\,+\,
\sin\bigl(t\,\sqrt{-\Delta}\,\bigr)
\,(-\Delta)^{-1/2}
\,f_1
\,+\,
t\,(v_0\,,f_1)\,,
\end{equation}
where
\begin{equation*}
\label{pseudoinverse}
(-\Delta)^{-1/2}
:=
\sum_{k=1}^\infty
\frac{1}{\lambda_k}\,(v_k\,,\,\cdot\,)\,v_k
\end{equation*}
is the pseudoinverse of the operator $\sqrt{-\Delta}\,$
\cite[Chapter 2 Section 2]{rellich}.

The RHS of
\eqref{wave equation solution of Cauchy problem}
contains three operators:
$\cos\bigl(t\,\sqrt{-\Delta}\,\bigr)$,
$\sin\bigl(t\,\sqrt{-\Delta}\,\bigr)$
and
$(-\Delta)^{-1/2}$.
The first two are Fourier Integral Operators (FIOs), whereas the third one is a
pseudodifferential operator.
Assuming one has a good description of the operator $(-\Delta)^{-1/2}\,$
--- for which there is a well developed theory, see e.g.~\cite{Hor} ---
solving the Cauchy problem
\eqref{wave equation},
\eqref{wave equation initial condition}
reduces to constructing the FIO
\begin{equation}
\label{definition of propagator}
U(t)
:=
\er^{-\ir t\sqrt{-\Delta}}
=
\int u(t,x,y)\,(\,\cdot\,)\,\rho(y)\,\dr y\,,
\end{equation}
whose Schwartz kernel reads
\begin{equation}
\label{propagator}
u(t,x,y):=\sum_{k=0}^\infty \er^{-\ir t\lambda_k}\,v_k(x)\,\overline{v_k(y)}\,.
\end{equation}
The operator $U(t)$ is called the \emph{wave propagator} (of the Laplacian) and is the
(distributional) solution of
\begin{subequations}
\begin{equation}
\label{main PDE}
\left(
-\ir\,\frac{\partial}{\partial t}
+
\sqrt{-\Delta}
\right)
U(t)=0\,,
\end{equation}
\begin{equation}
\label{half wave equation initial condition}
U(0)=\operatorname{Id}.
\end{equation}
\end{subequations}

The goal of this paper is to provide
an explicit formula for the operator $U(t)$ modulo an integral operator with infinitely smooth integral kernel,
written as a single invariantly defined oscillatory integral global in space and in time.

The study of solutions of hyperbolic partial differential equations on manifolds --- and of the wave propagator in particular --- is a well established subject, both within and outside microlocal analysis. As far as microlocal methods are concerned, rigorous descriptions of the singular structure of the propagator, as well as   the construction of parametrices, can be found, for example, in \cite{Had}, \cite{Ri49,Ri60}, \cite{DuHo}, \cite[Vol.~3 \& 4]{Hor}, \cite{Tr}, \cite{Shu}. These publications rely on spectral-theoretic techniques, often combined with tools from the theory of local oscillatory integrals.

In this paper, we adopt a somewhat different \emph{global} approach, which originates from the works of Laptev, Safarov and Vassiliev \cite{LSV} and Safarov and Vassiliev \cite{SaVa}. They showed that it is possible to write the propagator for a fairly wide class of hyperbolic equations as \emph{one} single Fourier integral operator, global both in space and in time, provided one uses a \emph{complex-valued} phase function. This idea is not entirely new.
For instance, constructions which look very similar at a formal level, albeit lacking mathematical rigour, have been for a long time appearing in solid state physics papers on electromagnetic wave propagation, obviously inspired by geometric optics. The mathematical formalisation of these ideas often appears under the name of `Gaussian beams', see, e.g., \cite{ralston}.
In the realm of pure mathematics, FIOs with complex phase functions were considered, for example, by Melin and Sj\"ostrand \cite{MeSj}. The fundamental difference between their approach and the one presented here lies in the fact that not only they have complex-valued phase functions, but, unlike \cite{LSV}, \cite{SaVa}, they also work in a complexified phase space, which makes the analysis quite dissimilar.

Melin and Sj\"ostrand's techniques were later adopted by Zelditch in the construction of the wave group on real analytic manifolds, see, e.g., \cite{Zel1} and \cite{Zel2}. In his works, focussed on the study of nodal domains and nodal lines of complex eigenfunctions, the wave group appears as the composition of three Fourier integral operators. The general idea of his construction --- up to technical details --- goes as follows. Consider the complexification $M_\mathbb{C}$ of $M$ and let
\[
M_\tau:=\{\zeta\in M_\mathbb{C} \,|\, \sqrt{\mathfrak{r}}(\zeta)\le \tau \}
\]
be the Grauert tube of radius $\tau$ of $M$ within $M_\mathbb{C}$, $\sqrt{\mathfrak{r}}$ denoting the Grauert tube function. Furthermore, let 
\[
\partial M_\tau:=\{\zeta\in M_\mathbb{C}\,|\,\sqrt{\mathfrak{r}}(\zeta)=\tau\}.
\]
Then the wave propagator $e^{-\ir t \sqrt{-\Delta}}:L^2(M)\to L^2(M)$ is given by the composition of
\begin{itemize}
\item[(i)] an operator $P^\tau: L^2(M) \to \mathcal{O}^{\frac{d-1}{4}}(\partial M_\tau)\subset L^2(\partial M_\tau)$, the analytic extension of the Poisson semigroup $\er^{\tau \sqrt{-\Delta}}$;
\item[(ii)] an operator $T_{\Phi^t}$ on $\mathcal{O}^{\frac{d-1}{4}}(\partial M_\tau)$,
\[
T_{\Phi^t} f:= f\circ \Phi^t,
\]
 realising the translation along the geodesic flow $\Phi^t$; 
\item[(iii)] the adjoint of $P^\tau$, $(P^\tau)^*:\mathcal{O}^{\frac{d-1}{4}}(\partial M_\tau)\to L^2(M)$. 
\end{itemize}
One needs, additionally, to incorporate a pseudodifferential operator $S_t$ (multiplication by a symbol) in order to obtain, in the end, a unitary operator
\[
(P^\tau)^* \circ S_t \circ T_{\Phi^t} \circ P^\tau: L^2(M) \to L^2(M).
\]
Zelditch's approach consists, effectively, in writing the wave group $U(t)$ as the conjugation of the translation operator $T_{\Phi^t}$
by the (analytic extension of the) Poisson semigroup $P^\tau$. For further details on the operator $P^\tau$ we refer the reader to \cite{boutet}, \cite{Zel3}, \cite{lebeau} and \cite{stenzel}.
Despite some similarities in the idea of adopting a complex phase to achieve a representation global in time, our construction is overall quite different from Zelditch's one, as it will be clear later on.

The techniques from \cite{LSV}, \cite{SaVa} are rather abstract and do not account for any underlying geometry. This may be a reason why they have not been picked up by the wider mathematical community. 
There are only few subsequent publications using these methods as a fundamental tool. Laptev and Sigal \cite{LaSi} constructed the propagator for the magnetic Schr\"{o}\-din\-ger operator in flat Euclidean space for phase functions with purely quadratic imaginary part. Jakobson, Safarov and Strohmaier, when studying branching billiards on Riemannian manifolds with discontinuous metric in \cite{jakobson}, rely in their proofs on boundary layer oscillatory integrals with complex-valued phase function, in the spirit of \cite{SaVa}. Furthermore, Safarov set his programme on global calculi on manifolds \cite{Sa14, McSa} in the framework of \cite{LSV}.  An extension of results from \cite{SaVa} to first order systems of PDEs has been carried out by Chervova, Downes and Vassiliev \cite{CDV} in the process of computing two-term spectral asymptotics.

Laptev and Sigal's results mentioned above were improved and extended by Robert in \cite{robert}, where he constructs explicitly the Schwartz kernel of the quantum propagator for the Schr\"odinger operator on $\mathbb{R}^d$ as a Fourier integral operator with quadratic complex-valued phase function and semiclassical subquadratic symbol. Robert adopts a distinguished phase function adapted to the Hamiltonian dynamics, which, however, does not coincide with a specialisation to the flat case of the Levi-Civita phase function used in the current paper.

The construction of \cite{LSV,SaVa} works, strictly speaking, for closed manifolds or compact manifolds with boundary. The compactness assumption, however, is not essential and can be removed with some effort. Results in this direction, although in a different setting and without the use of complex-valued phase functions, have been recently obtained by Coriasco and collaborators \cite{CoSh, CDS}. In the current paper, we will refrain from carrying out such an extension and we will stick to the case of closed manifolds.

The general properties and the singular structure of the integral kernel $u$ of the wave propagator, see \eqref{propagator}, are well understood. At the same time very little is known when it comes to explicit formulae. In particular, almost no information on the symbol of $U(t)$ can be found in the literature. With the exception of those cases where all eigenvalues and eigenfunctions are known, the only general result available to date is that the principal symbol is $1$. In fact, we are unaware of any invariant definition of full symbol  --- or subprincipal symbol ---  for Fourier integral operators of the form \eqref{definition of propagator}. The goal of the current paper is to build upon \cite{LSV}, developing their construction further for the case of Riemannian manifolds. The geometric nature of our construction will allow us to provide invariant definitions of full and $g$-subprincipal symbol of the wave propagator, analyse them, and give explicit formulae. Here `$g$' is a reference to the Riemannian metric used in the construction of the phase function $\varphi$, leading up to the definition of the full symbol.

Our construction, although non-trivial, is quite natural and fully geometric in its building blocks. Among other things, we aim to show the potential of the method, which, due to the fact of being fully explicit, may find applications in pure and applied mathematics, as well as in other applied sciences. With this in mind, we will not pursue the standard microlocal approach involving half-densities, but, rather, we will adjust our theory to the case of operators acting on scalar functions.

One of the applications of our construction of the wave propagator is the
calculation of higher Weyl coefficients,
see Appendix~\ref{Weyl coefficients}.
For the Laplacian this can be done using a variety of alternative methods,
the simplest being the heat kernel and the resolvent approaches.
However, if one replaces the Laplace--Beltrami operator by a first order system,
whose spectrum is, in general, not semi-bounded, the heat kernel method can no longer
be applied, at least in its original form. Furthermore, even resolvent techniques
require major modification \cite{ASV}.
In the future we plan to apply our approach to first order systems of partial differential equations on Riemannian manifolds for which we expect to compute additional (compared to what is known in the current literature) Weyl coefficients.

The paper is structured as follows.

In Section~\ref{Lagrangian manifolds and Hamiltonian flows} we present a brief overview of the theory of global Lagrangian distributions and their relation to hyperbolic problems, as developed in \cite{LSV}. Section~\ref{Main results} contains a concise summary of the main results of the paper. In Section~\ref{The Levi-Civita phase function} we introduce a special phase function, the \emph{Levi-Civita phase function}, which will later act as the key ingredient of our geometric analysis, and analyse its properties in detail. A global invariant definition of the full symbol of the wave propagator is formulated in Section~\ref{The global invariant symbol of the propagator}, and an algorithm for the calculation of all its homogeneous components is provided. Some of the more technical material used in  Section~\ref{The global invariant symbol of the propagator} has been moved to a separate Appendix~\ref{The amplitude-to-symbol operator}.
In order to implement the algorithm presented in Section~\ref{The global invariant symbol of the propagator} one also needs to study invariant representations of the identity operator in the form of an oscillatory integral: this is the subject of Section~\ref{Invariant representation of the identity operator}. Section~\ref{The subprincipal symbol of the propagator} is devoted to a detailed study of the $g$-subprincipal symbol of the wave propagator, culminating with Theorem~\ref{theorem ODE subprincipal symbol} which gives an explicit formula for it. In Section~\ref{Small time expansion for the subprincipal symbol} we provide an explicit small time asymptotic expansion for the $g$-subprincipal symbol. This allows us to recover, as a by-product, the third Weyl coefficient, see Appendix~\ref{Weyl coefficients}. In Section~\ref{Explicit examples} we apply our construction to two explicit examples in 2D: the sphere and the hyperbolic plane. Finally, in Section~\ref{Circumventing topological obstructions: a geometric picture} we discuss in detail the issue of circumventing  obstructions due to caustics.

\section{Lagrangian manifolds and Hamiltonian flows}
\label{Lagrangian manifolds and Hamiltonian flows}

The theory of Fourier integral operators, beautifully set out in the seminal papers by H\"ormander and Duistermaat \cite{Ho71, DuHo}, proved to be an extremely powerful tool in the analysis of partial differential equations and gave rise to several flourishing lines of research still active nowadays. As it is unrealistic to give a concise account of such a vast field of mathematical analysis, we refer the interested reader to the aforementioned papers and to the monographs by Duistermaat \cite{Dui}, Tr\`eves \cite[Vol.~2]{Tr} and H\"ormander \cite[Vol.~4]{Hor} for a detailed exposition.

In this section we will briefly summarise the theory of global Fourier integral operators with complex-valued phase function as developed by Laptev, Safarov and Vassiliev \cite{LSV}, in a formulation adapted to the current paper. Here and further on we adopt the notation $T'M:=T^*M\setminus \{0\}$. 

We call \emph{Hamiltonian} any smooth function $h:T'M \to \mathbb{R}$ positively homogeneous in momentum of degree one, i.e.\ such that $h(x,\lambda \,\xi)=\lambda\, h(x,\xi)$ for every $\lambda>0$. For any such Hamiltonian, we denote by $(x^*(t;y,\eta), \xi^*(t;y,\eta))$ the Hamiltonian flow, namely the (global) solution of Hamilton's equations
\begin{equation}\label{Hamiltonian flow}
\begin{split}
\dot{x}^*(t;y,\eta)&=h_\xi(x^*(t;y,\eta),\xi^*(t;y,\eta)), \\ 
\dot{\xi}^*(t;y,\eta)&=-h_x(x^*(t;y,\eta),\xi^*(t;y,\eta))\,,
\end{split}
\end{equation}
with initial condition $(x^*(0;y,\eta), \xi^*(0;y,\eta))=(y,\eta)$. Observe that, as a consequence of \eqref{Hamiltonian flow}, $x^*$ and $\xi^*$ are positively homogeneous in momentum of degree zero and one respectively. Further on, whenever $x^*$ and $\xi^*$ come without argument, $(t;y,\eta)$ is to be understood. This will be done for the sake of readability when there is no risk of confusion.

The Hamiltonian flow, in turn, defines a Lagrangian submanifold $\Lambda_h$ of $T^*\mathbb{R}\times T'M\times T'M$ given by
\begin{equation}\label{Lambda h}
\Lambda_h:=\{   (t,-h(y,\eta)),(x^*(t;y,\eta),\xi^*(t;y,\eta)),(y,-\eta)\,\,|\,\, t\in \mathbb{R},\,(y,\eta)\in T'M \}.
\end{equation}
Indeed, a straightforward calculation shows that the canonical symplectic form $\omega$ on $T^*\mathbb{R}\times T'M\times T'M$ satisfies $\omega|_{\Lambda_h}=0$.

We call \emph{phase function} an infinitely smooth function 
\[
\varphi:\mathbb{R}\times M \times T'M \to \mathbb{C}
\]
which is non-degenerate, positively homogeneous in momentum of degree one and such that $\operatorname{Im}\varphi \ge 0$. We say that a phase function $\varphi$ \emph{locally parame\-terises} the submanifold $\Lambda_h$ if, in local coordinates $x$ and $y$ and in a neighbourhood of a given point of $\Lambda_h$, we have
\begin{equation*}
\Lambda_h=\{ (t,\varphi_t(t,x;y,\eta)),(x,\varphi_x(t,x;y,\eta)),(y,\varphi_y(t,x;y,\eta))\,\,|\,\,(t,x;y,\eta)\in \mathfrak{C}_\varphi \},
\end{equation*}
where $\mathfrak{C}_\varphi:=\{(t,x;y,\eta)\,\,|\,\,\varphi_\eta(t,x;y,\eta)=0 \}$. 

The above definitions allow us to say what it means for a distribution (in the sense of distribution theory, see \cite[Vol.~1]{Hor}) to be associated with $\Lambda_h$. A distribution $u$ is called a \emph{Lagrangian distribution of order m associated with $\Lambda_h$} if $u$ can be represented locally, modulo $C^\infty$, as the sum of oscillatory integrals of the form
\begin{equation*}
\mathcal{I}_\varphi(a)=\int \er^{\ir \,\varphi(t,x;y,\eta)}\, a(t,x;y,\eta)\,\dbar \eta
\end{equation*}
where $\varphi$ is a phase function locally parameterising $\Lambda_h$ and
$a\in S^m_{\mathrm{ph}}(\mathbb{R}\times M \times T'M)$ is a poly\-homogeneous function of order $m$.
Here and further on
\begin{equation}
\label{definition of dbar}
\dbar\eta=(2\pi)^{-d}\dr\eta.
\end{equation}
We recall that a poly\-homogeneous function of order $m$ is an infinitely smooth function
\[
a: \mathbb{R}\times M \times T'M \to \mathbb{C}
\]
admitting an asymptotic expansion in positively homogeneous components, i.e.
\begin{equation}
\label{asymp exp ampl}
a(t,x;y,\eta)\sim \sum_{k=0}^\infty a_{m-k}(t,x;y,\eta),
\end{equation}
where $a_{m-k}$ is positively homogeneous in $\eta$ of degree $m-k$. Here and in the following it is understood that whenever we write $S^m_{\mathrm{ph}}(E\times T'M)$ we mean polyhomogeneous functions of order $m$ on $T'M$ depending smoothly on the variables in $E$.

In the theory of Fourier integral operators the function $a$ is usually referred to as amplitude of the oscillatory integral. In the current paper we will call it \emph{amplitude} and denote it by a Roman letter, e.g.\ $a(t,x;y,\eta)$, when it depends on the variable $x\in M$, whereas we will call it \emph{symbol} and denote it by a fraktur letter, e.g.\ $\mathfrak{a}(t;y,\eta)$, when it is independent of the variable $x\in M$. In fact, as it will be explained in the following, one can always assume to be in the latter situation, modulo an infinitely smooth error in an appropriate sense.

It is a well known fact that with a real-valued phase function one can achieve the above mentioned parameterisation for a generic Lagrangian manifold only locally. Indeed, classical constructions involving global Fourier integral operators, see, for instance, \cite{Ho71}, \cite[Vol.~2]{Tr}, always resort to (the sum of) local oscillatory integrals. This is due to obstructions of topological nature represented on the one hand by the non-triviality of a certain cohomology class in $H^1(\Lambda_h,\mathbb{Z})$  \cite{Lees}, known as the \emph{Maslov class}, and on the other hand by the presence of caustics. In the case of a Lagrangian manifold generated by a homogeneous Hamiltonian flow the former obstruction is not present. The adoption of a complex-valued phase functions allows one to circumvent the latter and perform a construction which is inherently global.

To explain why this is the case, we first need to impose a restriction on the class of admissible phase functions. In particular, since our goal is to parameterise Lagrangian manifolds generated by a Hamiltonian, we need to impose compatibility conditions between our phase function and the Hamiltonian flow.

\begin{definition}[Phase function of class $\mathcal{L}_h$]\label{phase function of class Fh}
We say that a phase function $\varphi=\varphi(t,x;y,\eta)$ defined on $\mathbb{R}\times M\times T'M$ is \emph{of class} $\mathcal{L}_h$ if it satisfies the conditions
\begin{enumerate}[(i)]
\item $\left.\varphi\right|_{x=x^*}=0$,
\item $\left.\varphi_{x^\alpha}\right|_{x=x^*}=\xi_\alpha^*$,
\item $\left.\det\varphi_{x^\alpha\eta_\beta}\right|_{x=x^*}\ne0$,
\item $\operatorname{Im}\varphi\ge0$.
\end{enumerate}
\end{definition}
\noindent The space of phase functions of class $\mathcal{L}_h$ is non-empty and path-connected \cite[Lemma~1.7]{LSV}.  

We are now able to state the main result contained in \cite{LSV}.

\begin{theorem} \label{main theorem LSV}
The Lagrangian submanifold $\Lambda_h$ can be globally parameterised by a single phase function of class $\mathcal{L}_h$.
\end{theorem}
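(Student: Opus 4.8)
The plan is to construct the required phase function explicitly, exploiting the fact that the obstruction coming from the Maslov class is absent for Lagrangian manifolds generated by a homogeneous Hamiltonian flow. Concretely, I would first fix a point $(y,\eta)\in T'M$ and the corresponding bicharacteristic $(x^*(t;y,\eta),\xi^*(t;y,\eta))$, and seek $\varphi$ in the form of a Taylor expansion in $x$ about $x=x^*$: conditions (i) and (ii) of Definition~\ref{phase function of class Fh} prescribe the zeroth- and first-order terms, and the freedom sits in the second- and higher-order coefficients. The key point is that we are allowed to take $\operatorname{Im}\varphi$ to be a positive-definite quadratic form in $x-x^*$ near the diagonal; this is precisely what buys us globality, since a Gaussian-type decay in the transverse directions means the stationary phase set $\mathfrak{C}_\varphi$ stays a graph over $T'M$ for all $t$, with no caustics appearing.

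The main steps I would carry out are: (1) reduce the problem to a local-in-$x$, global-in-$t$ statement by noting that conditions (i)--(iii) are conditions only along $x=x^*$, so it suffices to prescribe $\varphi$ and its $x$-derivatives up to order one on the submanifold $\{x=x^*\}$ and then extend; (2) verify that a phase function with these prescribed 1-jet along $x=x^*$ automatically parameterises $\Lambda_h$ in a neighbourhood of $\{x=x^*\}$ — this is essentially the content of the defining relations, since $\varphi_x|_{x=x^*}=\xi^*$, $\varphi_t|_{x=x^*}=-h(x,\varphi_x)|_{x=x^*}=-h(y,\eta)$ by homogeneity and the eikonal-type compatibility, and the non-degeneracy condition (iii) guarantees the implicit function theorem applies to solve $\varphi_\eta=0$; (3) globalise in $x$: away from a neighbourhood of the diagonal the oscillatory integral is smoothing because $\operatorname{Im}\varphi>0$ there, so only the diagonal neighbourhood matters and the construction glues; (4) invoke \cite[Lemma~1.7]{LSV} (quoted in the excerpt) for non-emptiness and path-connectedness of $\mathcal{L}_h$ to know such $\varphi$ exist and the choice is, up to homotopy, immaterial.

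The hard part, and the place where the real work lies, is step (2) combined with checking that $\operatorname{Im}\varphi\ge 0$ is consistent with the parameterisation being exact (not just modulo lower-order errors) on all of $\Lambda_h$ for all $t\in\mathbb{R}$ — i.e.\ that the transverse Hessian $\operatorname{Im}\varphi_{xx}|_{x=x^*}$ can be propagated in $t$ so as to remain positive-definite without ever degenerating. This is a matrix Riccati equation along the Hamiltonian flow, and its solvability for all time with the sign condition preserved is the genuinely nontrivial input; it is exactly where the complex-valued nature of the phase is indispensable, since a real phase would force this Hessian through singularities at focal points. I would handle it by writing the evolution of the $x$-Hessian of $\varphi$ induced by the transport along bicharacteristics, observe it is a Riccati-type ODE whose numerator/denominator decomposition is linear (governed by the linearised Hamiltonian flow, i.e.\ the Jacobi equation), and argue that choosing the initial Hessian with strictly positive imaginary part keeps the ``denominator'' matrix invertible for all $t$ — a standard complex WKB argument.

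Finally I would assemble these pieces: define $\varphi$ globally on $\mathbb{R}\times M\times T'M$ by the above 1-jet prescription along $\{x=x^*\}$ together with a globally chosen positive-definite transverse imaginary Hessian, check homogeneity degree one in $\eta$ is respected by the construction (using homogeneity of $x^*,\xi^*$ noted after \eqref{Hamiltonian flow}), and conclude that this single $\varphi$ of class $\mathcal{L}_h$ parameterises $\Lambda_h$ everywhere, which is the assertion of Theorem~\ref{main theorem LSV}.
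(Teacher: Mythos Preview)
The paper does not prove Theorem~\ref{main theorem LSV}. It is introduced with the words ``We are now able to state the main result contained in \cite{LSV}'' and is simply quoted from that reference; no argument is supplied here. So there is no ``paper's own proof'' against which to compare your proposal.

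That said, your sketch is broadly in the spirit of \cite{LSV}, but it imports one unnecessary difficulty. You propose to propagate the transverse Hessian $\varphi_{xx}|_{x=x^*}$ along the flow via a matrix Riccati equation and then argue that a positive imaginary initial Hessian keeps the solution nonsingular for all time. That would be the correct manoeuvre if $\varphi$ were required to satisfy the eikonal equation $\varphi_t+h(x,\varphi_x)=0$ --- but Definition~\ref{phase function of class Fh} imposes no such PDE. Conditions (i)--(iv) are pointwise constraints along $x=x^*$; differentiating (i) and (ii) in $\eta$ gives $\varphi_{x\eta}|_{x=x^*}=\xi^*_\eta-\varphi_{xx}|_{x=x^*}\,x^*_\eta$, so the Hessian $\varphi_{xx}|_{x=x^*}$ is a \emph{free} smooth choice at every $(t,y,\eta)$, not something determined by transport. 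One may therefore simply \emph{prescribe} it with a suitable positive-definite imaginary part (as the Levi-Civita phase function of Section~\ref{The Levi-Civita phase function} does explicitly) and verify (iii) directly, with no ODE to solve. Your Riccati route would still succeed, but it solves a harder problem than the one actually posed.
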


Theorem \ref{main theorem LSV} is crucial for the problem we want to study. In fact, take $h$ to be the principal symbol of the pseudodifferential operator $\sqrt{-\Delta}$, namely
\begin{equation}\label{hamiltonian laplacian}
h(x,\xi):=\left( g^{\alpha\beta}(x)\,\xi_\alpha\,\xi_\beta \right)^{1/2}.
\end{equation}
Then the flow \eqref{Hamiltonian flow} is (co)geodesic and the propagator for our hyperbolic PDE \eqref{main PDE} is a Fourier integral operator whose Schwartz kernel \eqref{propagator} is a Lagrangian distribution of order zero associated with the Lagrangian manifold $\Lambda_h$. As already noticed by Laptev, Safarov and Vassiliev in \cite{LSV}, being able to globally parameterise $\Lambda_h$ by a phase function of class $\mathcal{L}_h$ amounts to being able to write $u(t,x,y)$ as a single oscillatory integral, global both in space and in time.

This is not the only simplification brought about by this framework. Since the Maslov class of $\Lambda_h$ is trivial, and so is the reduced Maslov class, one can canonically identify sections of the Keller--Maslov bundle with smooth functions on $T'M$. In particular, the principal symbol of the Fourier integral operator defined by our Lagrangian distribution is simply the component of the highest degree of homogeneity $\mathfrak{a}_m$ in the asymptotic expansion of the symbol. We stress the fact that $\mathfrak{a}_m$ is a smooth scalar function on $T'M$ --- possibly depending on additional parameters --- which is independent of the choice of the phase function $\varphi$. Components of lower degree of homogeneity will generally depend on the choice of the phase function.

The crucial condition that allows us to pass through caustics is (iii) in Definition \ref{phase function of class Fh}. The degene\-racy of 
\begin{equation}\label{phixeta}
\left. \varphi_{x^\alpha\eta_\beta}\right|_{x=x^*}
\end{equation} for real-valued phase functions in the presence of conjugate points is what causes the analytic machinery to break down. The introduction of an imaginary part in $\varphi$ serves the purpose of ensuring that $\left.\det \varphi_{x^\alpha\eta_\beta}\right|_{x=x^*}\neq 0$ for all times. This is more than just a technical requirement, though; the object \eqref{phixeta} is actually capable of detecting information of topological nature about paths in $\Lambda_h$. This is reflected in the fact that, as it was firstly observed by Safarov and later formalised in \cite{LSV, SaVa}, \eqref{phixeta} is the core of a purely analytic definition of the Maslov index.

Consider the differential 1-form
\begin{equation}\label{Maslov form}
\vartheta_\varphi=- \dfrac{1}{2\pi} \dr \left[  \arg \left(\left. \det \varphi_{x^\alpha\eta_\beta}\right|_{x=x^*}  \right)^2\right]\,.
\end{equation}
Let $\gamma:=\{(x^*(t;y,\eta),\xi^*(t;y,\eta))\,\,|\,\, 0\le t \le T \}$ be a $T$-periodic Hamiltonian trajectory such that $x^*_\eta(T;y,\eta)=0$. Then the Maslov index of $\gamma$ is defined by
\begin{equation}\label{Maslov index}
\operatorname{ind} (\gamma):= \int_\gamma \, \vartheta_\varphi\,.
\end{equation}
It is easy to see that $\operatorname{ind} (\gamma)$ does not depend on the choice of the phase function $\varphi$. In fact, the index $\operatorname{ind}(\gamma)$ is determined by the de Rham cohomology class of $\vartheta_\varphi$ and \eqref{Maslov index} is the differential counterpart under the standard isomorphism between \v Cech and de Rham cohomologies of the approach in terms of cocycles adopted in \cite{Ho71}. See \cite[Section 1.5]{SaVa} for additional details.

\section{Main results}
\label{Main results}

We seek the Schwartz kernel \eqref{propagator}
of the propagator
\eqref{definition of propagator}
in the form
\begin{equation}\label{wave kernel = lagrangian + smoothing}
u(t,x,y)=\mathcal{I}_\varphi(\mathfrak{a})+\mathcal{K}(t,x,y),
\end{equation}
where $\mathcal{K}$ is an infinitely smooth kernel and
\begin{equation}
\label{main oscillatory integral}
\mathcal{I}_\varphi(\mathfrak{a})=
\int_{T^*_yM}\er^{\ir \varphi(t,x;y,\eta;\epsilon)}
\,\mathfrak{a}(t;y,\eta;\epsilon)\,\chi(t,x;y,\eta)
\,w(t,x;y,\eta;\epsilon)\,\dbar\eta
\end{equation}
is a global oscillatory integral.
Here $\varphi$ is a particular phase function of class $\mathcal{L}_h$,
with $h$ given by \eqref{hamiltonian laplacian},
which will be introduced in Section~\ref{The Levi-Civita phase function}.
This phase function is completely determined by the metric
and a positive parameter~$\epsilon$ and will be called the
{Levi-Civita phase function}.
Rigorous definitions of the symbol
$\mathfrak{a}$, cut-off $\chi$ and weight~$w$ appearing
on the RHS of \eqref{main oscillatory integral}
will be provided in Section~\ref{The global invariant symbol of the propagator}.
Let us emphasise that the representation \eqref{main oscillatory integral}
will be global in time $t\in\mathbb{R}$ and in space $x,y\in M$.

\

Our main results are as follows.

\begin{enumerate}[1.]
\item
We provide an invariant definition of the full symbol of the wave propagator
as a scalar function $\mathfrak{a}(t;y,\eta;\epsilon)$,
\[
\mathfrak{a}:
\mathbb{R}\times T'M\times \mathbb{R}_+ \to \mathbb{C},
\]
along with an explicit algorithm for the calculation
of all its homogeneous components,
see Section~\ref{The global invariant symbol of the propagator}.
Throughout the paper we use the notation $\mathbb{R}_+:=(0,+\infty)$.

\item
We determine the symbol of the identity operator
written as an invariant oscillatory integral,
see Section~\ref{Invariant representation of the identity operator}.

\item
We perform a detailed study of the
$g$-subprincipal symbol of the propagator
and provide a simplified algorithm for its calculation,
see Section~\ref{The subprincipal symbol of the propagator}.

\item
We write down a
small time asymptotic formula for the $g$-subprincipal symbol of the propagator,
see Theorem~\ref{theorem small time}.
\item
We apply our construction to
maximally symmetric
spaces of constant curvature in 2D,
the standard 2-sphere and the hyperbolic plane, see Section~\ref{Explicit examples}.
\item
Using our complex-valued phase function,
we provide a geometric construction which allows us
to visualise the analytical circumvention of
obstructions due to caustics, see Theorem~\ref{theorem geometric characterisation phixeta}.
\end{enumerate}

\section{The Levi-Civita phase function}
\label{The Levi-Civita phase function}

In this section we will introduce a distinguished phase function, the Levi-Civita phase function, providing motivation and basic properties.

\begin{definition}[Levi-Civita phase function]
We call the \emph{Levi-Civita phase function} the infinitely smooth function
\[
\varphi: \mathbb{R}\times M \times T'M \times \mathbb{R}_+ \to \mathbb{C}
\]
defined by 
\begin{equation}\label{phase function with parallel transport}
\varphi(t,x;y,\eta;\epsilon):= \int_\gamma \zeta \, \dr z + \dfrac{\ir\,\epsilon}{2}\, h(y,\eta) \, \operatorname{dist}^2(x,x^*(t;y,\eta))
\end{equation}
when $x$ lies in a geodesic neighbourhood\footnote{
Here and further on by `geodesic neighbourhood of $z$' we mean the image under the exponential map $\exp_{z}:T_zM \to M$ of a star-shaped neighbourhood $\mathcal{V}$ of $0\in T_{z}M$ such that $\exp_z|_{\mathcal{V}}$ is a diffeomorphism.}
of $x^*(t;y,\eta)$
and continued smoothly elsewhere in such a way that
$\operatorname{Im}\varphi\ge0$. 
The function $\operatorname{dist}$ is the Riemannian geodesic distance, the path of integration $\gamma$ is the (unique) shortest geodesic connecting $x^*(t;y,\eta)$ to $x$, and $\zeta$ is the result of the parallel transport of $\xi^*(t;y,\eta)$ along~$\gamma$.
\end{definition}

\begin{figure}[!h]
\begin{center}
\begin{tikzpicture}[scale=0.6]

\path[name path=border1] (0,0) to[out=-10,in=150] (6,-2);
\path[name path=border2] (12,1) to[out=150,in=-10] (5.5,3.2);
\path[name path=redline] (0,-0.4) -- (12,1.5);
\path[name intersections={of=border1 and redline,by={a}}];
\path[name intersections={of=border2 and redline,by={b}}];

\draw[black] 
  (0,0) to[out=-10,in=150] (6,-2) -- (12,1) to[out=150,in=-10] (5.5,3.7) -- cycle;
  
\draw[black]
(2.5,0.4) to[out=-1] (8,1);

\draw[black,dashed] (8,1) circle (1.3);

\path (2.5,0.4) node[label= left :\(y\)] (q1) {$\cdot$};
\path (8,1) node[label= right :$x^\ast$] (q1) {$\cdot$};

\node[rotate=0] at (6.15,-1.35) {$M$};

\end{tikzpicture}
\end{center}
\end{figure}
The imaginary part of $\varphi$ is pre-multiplied by a positive parameter $\epsilon$ in order to keep track of the effects of making $\varphi$ complex-valued. The real-valued case can be recovered by setting $\epsilon=0$.

It is straightforward to check that the Levi-Civita phase function $\varphi$ is of class $\mathcal{L}_h$. Note that in geodesic normal coordinates $x$ centred at
$x^*(t;y,\eta)$ the function $\varphi$ reads locally
\begin{equation}\label{phase function in normal coordinates}
\begin{split}
\varphi(t,x;y,\eta;\epsilon)&=(x-x^*(t;y,\eta))^\alpha\,\xi^*_\alpha(t;y,\eta) \\
&+ \dfrac{\ir\,\epsilon}{2}\, h(y,\eta) \, 
\delta_{\mu\nu}\,(x-x^*(t;y,\eta))^\mu(x-x^*(t;y,\eta))^\nu.
\end{split}
\end{equation}
Our phase function is invariantly defined
and naturally dictated by the geometry of $(M,g)$.
Its construction relies on the use of the Levi-Civita connection associated with the Riemannian metric $g$, which justifies its name.
From the analytic point of view, the adoption of the Levi-Civita phase function is particularly convenient in that it turns the Laplace-Beltrami operator into a partial differential operator with almost constant coefficients, up to curvature terms. In a sense, $\varphi$ `straightens out' the geometry of $(M,g)$, thus bringing about considerable simplifications in the analysis.
More precisely, the Levi-Civita phase function with $\epsilon=0$
has the following
properties which a general phase function associated with
the geodesic flow
does not possess:
\begin{enumerate}[(i)]
\item
$\left.(\Delta\varphi)\right|_{x=x^*}=0$;
\item
$\left.(\varphi_{tt})\right|_{x=x^*}=0$;
\item
the full symbol of the identity operator is $1$,
see Lemma~\ref{symbol identity Levi-Civita epsilon=0}.
\end{enumerate}

\begin{remark}
The real-valued Levi--Civita phase function appears, in various forms, in \cite{LSV}, \cite{SaVa} and \cite{McSa}. Note, however, that the geometric phase function used in the parametrix construction in \cite{Zel4} and \cite{canzani} is not the same as \eqref{phase function with parallel transport} for $\epsilon=0$: the phase function appearing in \cite{Zel4} and \cite{canzani} is \emph{linear} in $t$, whereas ours is not. This is essentially due to the fact that the Levi-Civita  phase function is constructed out of the cogeodesic flow.
\end{remark}

\begin{lemma}\label{lemma about versions of Re PF}
We have
\begin{equation*}\label{real part of PF with velocity}
\int_\gamma\zeta\,\dr z=\langle\xi^*(t;y,\eta),\exp^{-1}_{x^*}(x)\rangle,
\end{equation*}
where $\exp$ denotes the exponential map and
$\langle\,\cdot\,,\,\cdot\,\rangle$
is the (pointwise) canonical pairing between cotangent and tangent bundles.
\end{lemma}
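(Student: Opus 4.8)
The claim is that the first term of the Levi--Civita phase function, namely $\int_\gamma \zeta\,\dr z$, where $\gamma$ is the shortest geodesic from $x^*$ to $x$ and $\zeta$ is the parallel transport of $\xi^*$ along $\gamma$, equals the canonical pairing $\langle \xi^*(t;y,\eta), \exp^{-1}_{x^*}(x)\rangle$. The plan is to parametrise $\gamma$ explicitly by the exponential map and compute the line integral directly. Write $v := \exp^{-1}_{x^*}(x) \in T_{x^*}M$, so that $\gamma(s) = \exp_{x^*}(sv)$ for $s\in[0,1]$ is the shortest geodesic from $x^*$ to $x$. Then $\dot\gamma(s)$ is the parallel transport of $v = \dot\gamma(0)$ along $\gamma$, since a geodesic has parallel velocity field. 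Likewise let $\zeta(s)\in T^*_{\gamma(s)}M$ be the parallel transport of $\xi^*$ along $\gamma$.

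The key step is to recognise that $\langle \zeta(s), \dot\gamma(s)\rangle$ is constant in $s$. Indeed, parallel transport along $\gamma$ preserves the canonical pairing between a parallel covector field and a parallel vector field — this is the compatibility of the Levi--Civita connection with the duality $T^*M \leftrightarrow TM$ (equivalently, $\tfrac{\mathrm D}{\mathrm ds}\langle\zeta(s),\dot\gamma(s)\rangle = \langle \tfrac{\mathrm D\zeta}{\mathrm ds},\dot\gamma\rangle + \langle\zeta,\tfrac{\mathrm D\dot\gamma}{\mathrm ds}\rangle = 0$ since both covariant derivatives vanish). Hence
\[
\int_\gamma \zeta\,\dr z = \int_0^1 \langle \zeta(s),\dot\gamma(s)\rangle\,\dr s = \langle \zeta(0),\dot\gamma(0)\rangle = \langle \xi^*(t;y,\eta), v\rangle = \langle \xi^*(t;y,\eta),\exp^{-1}_{x^*}(x)\rangle,
\]
which is exactly the asserted identity. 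It remains to note that this computation is valid precisely when $x$ lies in a geodesic neighbourhood of $x^*$, so that $\exp^{-1}_{x^*}(x)$ is well defined and $\gamma$ is the unique shortest geodesic — which is the regime in which the Levi--Civita phase function is defined by \eqref{phase function with parallel transport}.

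I do not expect any serious obstacle here; the statement is essentially a reformulation of the definition of $\int_\gamma\zeta\,\dr z$ once one unpacks what parallel transport does. The only point requiring a modicum of care is the constancy of the pairing along the geodesic, which follows from metric compatibility of the connection; one could alternatively verify it in geodesic normal coordinates centred at $x^*$, where $\gamma$ is a straight ray $s\mapsto sv$, the Christoffel symbols vanish at the origin but not along the ray, yet the parallel-transport ODEs still conspire to keep $\langle\zeta(s),\dot\gamma(s)\rangle$ constant — so the invariant argument via $\tfrac{\mathrm D}{\mathrm ds}$ is cleaner and is the one I would present.
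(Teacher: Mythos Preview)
Your proof is correct and follows essentially the same approach as the paper: both arguments parametrise $\gamma$ over $[0,1]$, write the line integral as $\int_0^1\langle\zeta(s),\dot\gamma(s)\rangle\,\dr s$, and observe that the pairing is constant because both $\zeta$ and $\dot\gamma$ are parallel along the geodesic, yielding $\langle\xi^*,\dot\gamma(0)\rangle=\langle\xi^*,\exp^{-1}_{x^*}(x)\rangle$. The paper phrases the constancy step via the Leibniz rule for $\nabla_{\dot\gamma}$ exactly as you do.
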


\begin{proof}
Denoting by $P_{\gamma(s)}:T^*_{x^*(t;y,\eta)}M\to T^*_{\gamma(s)}M$ the one-parameter family of operators realising the parallel transport of covectors from $x^*(t;y,\eta)$ to $\gamma(s)$ along $\gamma:[0,1]\to M$, we have
\begin{equation*}
\begin{split}
\int_\gamma \zeta\,\dr z &= \int_0^1 \langle P_{\gamma(s)}(\xi^*(t;y,\eta)),\dot\gamma(s) \rangle\, \dr s\\ 
&=\int_0^1 \langle \xi^*(t;y,\eta),\dot\gamma(0) \rangle\, \dr s=\langle\xi^*(t;y,\eta),\exp_{x^*}^{-1}(x)\rangle,
\end{split}
\end{equation*}
where the dot stands for the derivative with respect to the parameter $s$.
At the second step we used the fact that
\begin{equation*}
\begin{split}
&\dfrac{d}{ds}\langle P_{\gamma(s)}(\xi^*(t;y,\eta)),\dot\gamma(s) \rangle \\&\quad = \langle \nabla_{\dot \gamma(s)}\,P_{\gamma(s)}(\xi^*(t;y,\eta)),\dot\gamma(s) \rangle +\langle P_{\gamma(s)}(\xi^*(t;y,\eta)),\nabla_{\dot \gamma(s)}\dot\gamma(s) \rangle
=0.
\end{split}
\end{equation*}
\end{proof}
In view of Lemma \ref{lemma about versions of Re PF}, we can recast the Levi-Civita phase function \eqref{phase function with parallel transport} in the more explicit form
\begin{equation}\label{phase function with distance}
\varphi(t,x;y,\eta;\epsilon):= -\frac{1}{2} \langle{\xi^*},{\left. \operatorname{grad}_z [\operatorname{dist}^2(x,z)]\right|_{z=x^*}}\rangle
+\dfrac{\ir \, \epsilon }{2} \,h(y,\eta) \,
\operatorname{dist}^2(x^\ast,x)\,,
\end{equation}
where the initial velocity $\exp_{x^*}^{-1}(x)$ is expressed in terms of the geodesic distance squared.

As briefly discussed in Section~\ref{Lagrangian manifolds and Hamiltonian flows}, the phase function is capable of detecting information of topological nature. In particular, a crucial role is played by the two-point tensor
$\varphi_{x^\alpha \eta_\beta}$ and its determinant.

\begin{theorem}\label{theorem phixeta at x=xstar}
In any coordinate systems $x$ and $y$,
$\,\varphi_{x^\alpha\eta_\beta}$ along the flow is given by
\begin{equation}\label{phixeta at x=xstar}
\left.\varphi_{x^\alpha\eta_\beta}\right|_{x=x^*}= 
\dfrac{\partial \xi^\ast_\alpha}{\partial \eta_\beta} 
- \Gamma^\mu{}_{\alpha\nu}(x^*)\,\xi^*_\mu\,\dfrac{\partial x^*{}^\nu}{\partial \eta_\beta} 
- \ir \,\epsilon\, h(y,\eta) \,g_{\alpha\nu}(x^*)\,\dfrac{\partial x^*{}^\nu}{\partial \eta_\beta}\,,
\end{equation}
where $\Gamma^\mu{}_{\alpha\nu}$ are the Christoffel symbols.
\end{theorem}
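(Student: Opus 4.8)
The plan is to compute $\varphi_{x^\alpha\eta_\beta}$ by differentiating the defining relations of the class $\mathcal{L}_h$, rather than by differentiating the explicit formula \eqref{phase function with distance} directly. The starting point is condition (ii) of Definition~\ref{phase function of class Fh}, namely $\varphi_{x^\alpha}(t,x^*(t;y,\eta);y,\eta;\epsilon)=\xi^*_\alpha(t;y,\eta)$, which holds identically in $(t;y,\eta)$. Differentiating this identity with respect to $\eta_\beta$ by the chain rule gives
\[
\left.\varphi_{x^\alpha x^\nu}\right|_{x=x^*}\frac{\partial x^{*\nu}}{\partial\eta_\beta}+\left.\varphi_{x^\alpha\eta_\beta}\right|_{x=x^*}=\frac{\partial\xi^*_\alpha}{\partial\eta_\beta},
\]
so that $\left.\varphi_{x^\alpha\eta_\beta}\right|_{x=x^*}=\dfrac{\partial\xi^*_\alpha}{\partial\eta_\beta}-\left.\varphi_{x^\alpha x^\nu}\right|_{x=x^*}\dfrac{\partial x^{*\nu}}{\partial\eta_\beta}$. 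Thus the whole problem reduces to identifying the Hessian $\left.\varphi_{x^\alpha x^\nu}\right|_{x=x^*}$ in an arbitrary coordinate system, which I expect to be the main obstacle, since the real part of $\varphi$ vanishes to second order along $x=x^*$ only in geodesic normal coordinates and picks up Christoffel-symbol corrections in general coordinates.

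To handle the Hessian I would split $\varphi=\varphi_{\mathrm{Re}}+\tfrac{\ir\epsilon}{2}h(y,\eta)\operatorname{dist}^2(x,x^*)$ according to \eqref{phase function with parallel transport}. For the imaginary part, the Hessian in $x$ of $\operatorname{dist}^2(x,x^*)$ at $x=x^*$ is the standard fact that $\partial_{x^\alpha}\partial_{x^\nu}\big|_{x=x^*}\operatorname{dist}^2(x,x^*)=2g_{\alpha\nu}(x^*)$ in any coordinate system (the first derivative vanishes at the diagonal, so no Christoffel terms survive), contributing $\ir\epsilon h(y,\eta)g_{\alpha\nu}(x^*)$ to $\left.\varphi_{x^\alpha x^\nu}\right|_{x=x^*}$; multiplying by $-\partial x^{*\nu}/\partial\eta_\beta$ produces exactly the last term of \eqref{phixeta at x=xstar}. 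For the real part, I would use Lemma~\ref{lemma about versions of Re PF}, writing $\varphi_{\mathrm{Re}}(t,x;y,\eta)=\langle\xi^*,\exp_{x^*}^{-1}(x)\rangle$, and observe that in geodesic normal coordinates centred at $x^*$ this is exactly linear, $(x-x^*)^\alpha\xi^*_\alpha$, by \eqref{phase function in normal coordinates}; hence its $x$-Hessian vanishes in normal coordinates at $x=x^*$. Transforming the $(0,2)$-tensor-like object $\partial_{x^\alpha}\partial_{x^\nu}\varphi_{\mathrm{Re}}$ back to a general coordinate system introduces the usual non-tensorial correction: since $\partial_{x^\nu}\varphi_{\mathrm{Re}}|_{x=x^*}=\xi^*_\nu\neq0$, the second derivative in general coordinates equals the connection term $-\Gamma^\mu{}_{\alpha\nu}(x^*)\,\xi^*_\mu$ (the transformation law of a covector field's derivative), so $\left.\partial_{x^\alpha}\partial_{x^\nu}\varphi_{\mathrm{Re}}\right|_{x=x^*}=-\Gamma^\mu{}_{\alpha\nu}(x^*)\xi^*_\mu$.

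Assembling the pieces, $\left.\varphi_{x^\alpha x^\nu}\right|_{x=x^*}=-\Gamma^\mu{}_{\alpha\nu}(x^*)\xi^*_\mu+\ir\epsilon h(y,\eta)g_{\alpha\nu}(x^*)$, and substituting into the reduction formula from the first paragraph yields \eqref{phixeta at x=xstar} term by term. The step I would flag as requiring the most care is the coordinate-change argument for the real part: one must be careful that $\varphi_{\mathrm{Re}}$ is genuinely a scalar function of $x$ for fixed $(t;y,\eta)$, so that its first derivative transforms as a covector and its second derivative transforms with precisely one Christoffel correction — no curvature terms appear because we only go to second order and evaluate on the diagonal, where the normal-coordinate Hessian is zero. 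A cleaner alternative to the coordinate-change bookkeeping is to note that $\left.\partial_{x^\alpha}\partial_{x^\nu}\varphi_{\mathrm{Re}}\right|_{x=x^*}=\nabla_\alpha(\partial_\nu\varphi_{\mathrm{Re}})\big|_{x=x^*}+\Gamma^\mu{}_{\alpha\nu}\partial_\mu\varphi_{\mathrm{Re}}\big|_{x=x^*}$, and the covariant Hessian $\nabla_\alpha\nabla_\nu\varphi_{\mathrm{Re}}$ vanishes at $x=x^*$ because it does so in normal coordinates and is a tensor; this gives the Christoffel term directly and transparently.
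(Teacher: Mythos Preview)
Your overall strategy is correct and in fact somewhat different from the paper's. The paper obtains the second-order Taylor expansion of $\varphi$ in powers of $(x-x^*)$ directly in arbitrary coordinates: it writes the geodesic $\gamma$ from $x^*$ to $x$ as $\gamma(s)=x^*+(x-x^*)s+z(s)$, plugs into the geodesic equation to find $z(s)=\tfrac{s(1-s)}2\Gamma^\alpha{}_{\mu\nu}(x^*)(x-x^*)^\mu(x-x^*)^\nu+O(\|x-x^*\|^3)$, reads off $\dot\gamma(0)$, and hence the quadratic term of $\varphi$. Your route --- differentiate condition~(ii) in $\eta_\beta$ to reduce to the $x$-Hessian, then compute that Hessian by a tensoriality argument in normal coordinates --- is more conceptual and equally valid. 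Both land on the same expansion
\[
\varphi=(x-x^*)^\alpha\xi^*_\alpha+\tfrac12\,\Gamma^\alpha{}_{\mu\nu}(x^*)\,\xi^*_\alpha\,(x-x^*)^\mu(x-x^*)^\nu+\tfrac{\ir\epsilon h}{2}\,g_{\mu\nu}(x^*)(x-x^*)^\mu(x-x^*)^\nu+O(\|x-x^*\|^3),
\]
after which the result is immediate.

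There is, however, a sign slip in your computation of the Hessian of the real part. The correct value is
\[
\left.\partial_{x^\alpha}\partial_{x^\nu}\varphi_{\mathrm{Re}}\right|_{x=x^*}=+\,\Gamma^\mu{}_{\alpha\nu}(x^*)\,\xi^*_\mu,
\]
not $-\Gamma^\mu{}_{\alpha\nu}(x^*)\xi^*_\mu$. Your own ``cleaner alternative'' already gives the right sign: from $\partial_\alpha\partial_\nu\varphi_{\mathrm{Re}}=\nabla_\alpha\nabla_\nu\varphi_{\mathrm{Re}}+\Gamma^\mu{}_{\alpha\nu}\,\partial_\mu\varphi_{\mathrm{Re}}$ and the vanishing of the covariant Hessian at $x=x^*$ one gets $+\Gamma^\mu{}_{\alpha\nu}\xi^*_\mu$. (Equivalently, the Christoffel transformation law at the centre of normal coordinates reads $\Gamma^\mu{}_{\alpha\nu}(x^*)=\tfrac{\partial x^\mu}{\partial\tilde x^\rho}\,\tfrac{\partial^2\tilde x^\rho}{\partial x^\alpha\partial x^\nu}$, so the non-tensorial correction to the ordinary Hessian comes in with a $+$ sign.) With the corrected Hessian $\left.\varphi_{x^\alpha x^\nu}\right|_{x=x^*}=\Gamma^\mu{}_{\alpha\nu}(x^*)\xi^*_\mu+\ir\epsilon\,h(y,\eta)\,g_{\alpha\nu}(x^*)$, your reduction formula indeed yields \eqref{phixeta at x=xstar} term by term; as written, your assembly with $-\Gamma$ would produce the wrong sign on the Christoffel term in the final answer.
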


\begin{proof}
Let us seek an expansion for the phase function $\varphi$ in powers of $(x-x^*)$ up to second order. To this end, we need to obtain an analogous expansion for $\dot\gamma(0)$ first. Recall that $\gamma:[0,1]\to M$ is the shortest geodesic connecting $x^*$ to $x$, hence satisfying 
\[
\gamma(0)=x^*, \qquad \gamma(1)=x.
\]
Put
\[
\gamma(s)=x^*+(x-x^*)\,s+ z(s;x,x^*),
\]
where $z$ is a correction of order $O(\|x-x^*\|^2)$ such that $z(0)=0$ and $z(1)=0$. By requiring $\gamma$ to satisfy the geodesic equation, we obtain
\[
\ddot z(s)+ \Gamma^{\alpha}{}_{\mu\nu}(\gamma(s))\,(x-x^*)^\mu\,(x-x^*)^\nu=0+
O(\|x-x^*\|^3),
\]
from which we get
\[
z(s)=\frac{s(1-s)}{2}\,\Gamma^\alpha{}_{\mu\nu}(x^*)\,(x-x^*)^\mu\,(x-x^*)^\nu+
O(\|x-x^*\|^3)
\]
and, in turn,
\[
\dot \gamma^\alpha(0)=(x-x^*)^\alpha +\frac12\,\Gamma^\alpha{}_{\mu\nu}(x^*)\,(x-x^*)^\mu\,(x-x^*)^\nu+
O(\|x-x^*\|^3).
\]
It ensues that the Levi-Civita phase function admits the expansion
\begin{equation*}\label{expansion PF second order in x}
\begin{split}
\varphi(t,x;y,\eta;\epsilon)=& (x-x^*)^\alpha\,\xi^*_\alpha +\frac12 \,\Gamma^\alpha{}_{\mu\nu}(x^*)\,\xi^*_\alpha\,(x-x^*)^\mu\,(x-x^*)^\nu \\
&+ \dfrac{\ir\,\epsilon\,h(y,\eta)}{2} g_{\mu\nu}(x^*)\,(x-x^*)^\mu\,(x-x^*)^\nu +
O(\|x-x^*\|^3).
\end{split}
\end{equation*}
Formula \eqref{phixeta at x=xstar} now follows by direct differentiation.
\end{proof}

The explicit formula established in Theorem~\ref{theorem phixeta at x=xstar} is quite useful. In fact, it offers a direct way of investigating the properties of $\Lambda_h$ and computing the Maslov index. We will come back to this later on.

\section{The global invariant symbol of the propagator}
\label{The global invariant symbol of the propagator}

In this section we will present an algorithm for the construction of a
global invariant full symbol $\mathfrak{a}$ for the wave propagator. 

In view of formulae
\eqref{wave kernel = lagrangian + smoothing}
and
\eqref{main oscillatory integral},
let us consider the Lagrangian distribution
\begin{equation}\label{Lagrangian distribution for wave}
\mathcal{I}_\varphi(\mathfrak{a})=\int_{T^*_yM}\er^{\ir \varphi(t,x;y,\eta;\epsilon)}
\,\mathfrak{a}(t;y,\eta;\epsilon)\,\chi(t,x;y,\eta)
\,w(t,x;y,\eta;\epsilon)\,\dbar\eta\,,
\end{equation}
where the quantities on the RHS are defined as follows.
\begin{itemize}
\item $\varphi$ is the Levi-Civita phase function \eqref{phase function with distance}.
\item
$\mathfrak{a}\in S^0_{\mathrm{ph}}(\mathbb{R}\times T'M\times\mathbb{R}_+)$
is a polyhomogeneous symbol with asymptotic expansion
\begin{equation}\label{homogeneous asymptotic expansion of symbol}
\mathfrak{a}(t;y,\eta;\epsilon)\sim \sum_{k=0}^\infty \mathfrak{a}_{-k}(t;y,\eta;\epsilon),
\end{equation}
where the $\mathfrak{a}_{-k}\in S^{-k}(\mathbb{R}\times T'M\times\mathbb{R}_+)$
are positively homogeneous in momentum of degree $-k$. They represent the unknowns of our construction.
\item $\chi\in C^\infty(\mathbb{R}\times M\times T'M)$ is a cut-off satisfying the requirements
\begin{enumerate}[(i)]
\item $\chi(t,x;y,\eta)=0$ on $\{(t,x;y,\eta) \,|\, |h(y,\eta)|\leq 1/2\}$;
\item $\chi(t,x;y,\eta)=1$ on the intersection of $\{(t,x;y,\eta) \,|\, |h(y,\eta)| \geq 1\}$ with some conical neighbourhood of $\{(t,x^\ast(t;y,\eta);y,\eta) \}$;
\item $\chi(t,x;y,\alpha\, \eta)=\chi(t,x;y,\eta)$ for $\alpha\geq 1$ on $\{ (t,x;y,\eta) \, | \, |h(y,\eta)|\geq 1   \}$.
\end{enumerate}
The function $\chi$ serves the purpose of localising the domain of integration to a neighbourhood of orbits with initial conditions $(y,\eta)\in T'M$ and away from the zero section. Recall that the Hamiltonian $h$ is positively homogeneous in $\eta$ of degree $1$. Further on, we will set $\chi\equiv 1$ while carrying out calculations. This will not affect the final result, as stationary phase arguments show that contributions to the oscillatory integral \eqref{Lagrangian distribution for wave} only come from
a neighbourhood of the set
\[
\{(t,x;y,\eta)\,\,|\,\, x=x^*(t;y,\eta) \}
\]
on which $\varphi_\eta=0$.
Different choices of $\chi$ result in oscillatory integrals differing by infinitely smooth contributions.
\item $w(t,x;y,\eta;\epsilon)$ is defined by
\begin{equation}\label{weight w}
w(t,x;y,\eta;\epsilon):=
[\rho(x)]^{-1/2}\,[\rho(y)]^{-1/2}
\left[
{\det}^2\!
\left(
\varphi_{x^\alpha \eta_\beta}(t,x;y,\eta;\epsilon)
\right)
\right]^{1/4}
\end{equation}
with $\rho=\sqrt{\det g_{\mu\nu}}$  as in \eqref{riemannian density}.
The branch of the complex root is chosen in such a way that 
 \[
 \left.
\arg \left[
{\det}^2\!
\left(
\varphi_{x^\alpha \eta_\beta}(t,x;y,\eta;\epsilon)
\right)
\right]^{1/4} \right|_{t=0} =0.
\]
The existence of a smooth
global branch whose argument turns to zero at $t=0$ was established by \cite[Lemma 3.2]{LSV}. 
The weight $w$ is a $(-1)$-density in $y$ and a scalar function in all other arguments. It ensures that the oscillatory integral \eqref{Lagrangian distribution for wave} is a scalar and that the principal symbol $\mathfrak{a}_0$ of the wave propagator does not depend on the choice of the phase function \cite[Theorem 2.7.11]{SaVa}.
Thanks to condition (iii) in Definition \ref{phase function of class Fh} we can assume, without loss of generality, that $w$ is non-zero whenever $\chi$ is non-zero.
\end{itemize}

\begin{remark}
\label{remark on fourth root}
The reason we write
$\left[
{\det}^2\!
\left(
\varphi_{x^\alpha \eta_\beta}
\right)
\right]^{1/4}$
in formula \eqref{weight w}
rather than
$\sqrt{\det\varphi_{x^\alpha \eta_\beta}}\,$
is that the coordinate systems $x$ and $y$ may be different: inversion of a single coordinate $x^\alpha$
changes the sign of $\,\det\varphi_{x^\alpha \eta_\beta}\,$ and so does inversion of a single coordinate $y^\beta$.
\end{remark}

The general idea is to choose the phase function to be the Levi-Civita phase function, fixing it once and for all, and to seek a formula for the corresponding scalar symbol
$\mathfrak{a}$. This is achieved by means of the following algorithm, which reduces the problem of solving partial differential equations to the much simpler problem of solving ordinary differential equations.

\

{\bf Step one}.
Set $\chi(t,x;y,\eta;\epsilon)=1$ and apply the wave operator
\begin{equation}
\label{definition of the wave operator}
\mathcal{P}:=\partial_t^2-\Delta_x
\end{equation}
to \eqref{Lagrangian distribution for wave}. 
The result is an oscillatory integral 
\begin{equation}\label{unreduced oscillatory integral with a}
\mathcal{I}_\varphi(a)=\mathcal{P}\,\mathcal{I}_\varphi(\mathfrak{a})
\end{equation}
of the same form but with a different amplitude
\[
\begin{split}
&a(t,x;y,\eta;\epsilon)\\
&\quad=\er^{-\ir \varphi(t,x;y,\eta;\epsilon)}\,
[w(t,x;y,\eta;\epsilon)]^{-1}\,
\mathcal{P} 
\left(  \er^{\ir \varphi(t,x;y,\eta;\epsilon)} \,\mathfrak{a}(t;y,\eta;\epsilon)\,w(t,x;y,\eta;\epsilon)
 \right).
\end{split}
\]
Observe that $a\in S^2_{\mathrm{ph}}(\mathbb{R}\times M\times T'M\times \mathbb{R}_+)$. The use of the full wave operator $\mathcal{P}$ as opposed to the half-wave operator $(-\ir\,\partial_t+\sqrt{-\Delta})$ is justified by \cite[Theorem 3.2.1]{SaVa}.

\

{\bf Step two}. Construct a new oscillatory integral with $x$-independent amplitude $\mathfrak{b}=\mathfrak{b}(t;y,\eta;\epsilon)$, coinciding with \eqref{unreduced oscillatory integral with a} up to an infinitely smooth term:
\begin{equation}\label{identity of amplitude reduction}
\mathcal{I}_\varphi(\mathfrak{b}) \overset{\mod C^\infty}{=} \mathcal{I}_\varphi(a).
\end{equation}
Such a procedure is called \emph{reduction of the amplitude}.
This can be done by means of special operators, as described below.

Put
\begin{equation}\label{operator L}
L_\alpha:=\left[(\varphi_{x\eta})^{-1}\right]_\alpha{}^\beta\,\dfrac{\partial}{\partial x^\beta}
\end{equation}
and define
\begin{subequations}\label{operators mathfrak S}
\begin{gather}
\mathfrak{S}_0:=\left.\left( \,\cdot\, \right)\right|_{x=x^*}\,, \label{mathfrak S0}\\
\mathfrak{S}_{-k}:=\mathfrak{S}_0 \left[ \ir \, w^{-1} \frac{\partial}{\partial \eta_\beta}\, w \left( 1+ \sum_{1\leq |\boldsymbol{\alpha}|\leq 2k-1} \dfrac{(-\varphi_\eta)^{\boldsymbol{\alpha}}}{\boldsymbol{\alpha}!\,(|\boldsymbol{\alpha}|+1)}\,L_{\boldsymbol{\alpha}} \right) L_\beta  \right]^k\,.\label{mathfrak Sk}
\end{gather}
\end{subequations}
Bold Greek letters in \eqref{mathfrak Sk} denote multi-indices in $\mathbb{N}^d_0$, $\boldsymbol{\alpha}=(\alpha_1, \ldots, \alpha_d)$, $|\boldsymbol{\alpha}|=\sum_{j=1}^d \alpha_j$ and $(-\varphi_\eta)^{\boldsymbol{\alpha}}:=(-1)^{|\boldsymbol{\alpha}|}\, (\varphi_{\eta_1})^{\alpha_1}\dots ( \varphi_{\eta_d})^{\alpha_d}$. All differentiations are applied to the whole expression to the right of them. The operator
\eqref{mathfrak Sk} is well defined
because the differential operators $L_\alpha$ commute, see Lemma~\ref{lem:commutativity_Ls} in Appendix~\ref{The amplitude-to-symbol operator}.

When applied to a homogeneous function, the operator $\mathfrak{S}_{-k}$ decreases the degree of homogeneity in $\eta$ by $k$. Hence, denoting by
$
a \sim \sum_{j=0}^\infty a_{2-j}
$
the asymptotic polyhomogeneous expansion of $a$, the homogeneous components of the symbol $\mathfrak{b}$ are
\begin{equation}\label{construction homogeneous components of the reduced amplitude}
\mathfrak{b}_l:=\sum_{2-j-k=l} \mathfrak{S}_{-k}\,a_{2-j},\qquad
l=2,1,0,-1,\ldots.
\end{equation}
We call the operator $\mathfrak{S} \sim \sum_{k=0}^{\infty} \mathfrak{S}_{-k}\,$ the \emph{amplitude-to-symbol operator}. It maps the $x$-dependent amplitude $a$ to the $x$-independent symbol $\mathfrak{b}$. The construction of $\mathfrak{S}$ and the proof of the equality \eqref{identity of amplitude reduction} are presented in Appendix \ref{The amplitude-to-symbol operator}.\\

{\bf Step three}. Impose the condition that our oscillatory integral
\eqref{Lagrangian distribution for wave}
satisfies the wave equation, namely
\[
\mathcal{P}\mathcal{I}_\varphi(\mathfrak{a})\overset{\mod C^\infty}{=} \mathcal{I}_\varphi(\mathfrak{b}) = 0.
\]
This is achieved by solving \emph{transport equations} obtained by equating to zero the homogeneous components of the reduced amplitude $\mathfrak{b}$:
\begin{equation}\label{transport equations}
\mathfrak{b}_l=0,\qquad
l=2,1,0,-1,\ldots.
\end{equation}
Note that the equation $\mathfrak{b}_{2}=0$ may be referred to as \emph{eikonal equation}, see \cite[subsection~2.4.2]{SaVa} for details.

Formula \eqref{transport equations} describes a hierarchy of ordinary differential equations in the variable $t$ whose unknowns are the homogeneous components of the original amplitude $\mathfrak{a}$. Solving such equations iteratively produces an explicit formula for the symbol of the wave kernel. Initial conditions
$\mathfrak{a}_{-k}(0;y,\eta;\epsilon)$ are established in such a way that
at $t=0$ our oscillatory integral
\eqref{Lagrangian distribution for wave}
is, modulo $C^\infty$, the integral kernel of the identity operator ---
see Section~\ref{Invariant representation of the identity operator}
for details. 

\begin{remark}
One knows \emph{a priori} that the leading homogeneous term in the expansion \eqref{homogeneous asymptotic expansion of symbol} is
\begin{equation}\label{principal symbol is 1}
\mathfrak{a}_0(t;y,\eta;\epsilon)=1.
\end{equation}
This is a consequence of the fact that the subprincipal
symbol of the Laplace--Beltrami operator is zero,
see \cite[Theorem~4.1]{LSV} or \cite[Theorem~3.3.2]{SaVa}.
Formula \eqref{principal symbol is 1}
holds for any choice of phase function
due to the way \eqref{Lagrangian distribution for wave} is designed.
\end{remark}

Let us explain more precisely what we mean by saying that our construction is global in time. The issue with the standard construction is that, in the presence of caustics, one cannot parameterise globally the Lagrangian manifold generated by the Hamiltonian flow of the principal symbol by means of a single real-valued phase function. In our analytic framework, this means that the phase function may become degenerate when $x=x^*(t;y,\eta)$ and $x^*(t;y,\eta)$ is in the cut locus or conjugate locus of $y$. In turn, the weight $w$ vanishes and the Fourier integral operator with integral kernel \eqref{Lagrangian distribution for wave} ceases to be well-defined. The adoption of a complex-valued phase function allows us to circumvent these problems and construct a Fourier integral operator which is always well defined.

Note that the issue of `local vs global' is \emph{not} related to the use of the geodesic distance in the definition of our phase function. In fact, what appears in our construction is the geodesic distance between $x$ and $x^*$. Now,  non-smoothing contributions come from points $x$ close to $x^*$, as these are the only stationary points for the phase in the support of the amplitude. As the injectivity radius is strictly positive, one can always choose a cut-off $\chi$ in such a way that the above points $x$ are not in the cut locus or conjugate locus of $x^*$. What happens outside a small open neighbourhood of the geodesic flow gives an infinitely smoothing contribution.

We are now in a position to give the following definition.

\begin{definition}
\label{invariant definition of symbol}
We define the \emph{symbol of the wave propagator} as the scalar function
\begin{gather*}
\mathfrak{a}: \mathbb{R} \times T'M \times \mathbb{R}_+ \to \mathbb{C}\,\nonumber \\
\mathfrak{a}(t;y,\eta;\epsilon) = 1 +\mathfrak{a}_{-1}(t;y,\eta;\epsilon)+\mathfrak{a}_{-2}(t;y,\eta;\epsilon)+\dots
\end{gather*}
obtained through the above algorithm
with the choice of the Levi-Civita phase function.
\end{definition}

The above definition is invariant: $\mathfrak{a}$ depends only on $\varphi$ which, in turn, arises from the geometry of $(M,g)$ in a coordinate-free, covariant manner.

The algorithm provided in this section allows us to construct the wave propagator as a Fourier integral operator whose Schwartz kernel is a global Lagrangian distribution, namely, a single oscillatory integral global in space and in time, with invariantly defined symbol. In particular, it allows one to circumvent at an analytic level obstructions arising from caustics.

In Section \ref{The subprincipal symbol of the propagator} we will see the algorithm in action and perform a detailed analysis of the $g$-subprincipal symbol. In Section \ref{Explicit examples} we will apply our algorithm to two explicit examples.

\begin{remark}
The remainder terms in the asymptotic formulae provided in this paper are not uniform in time: they are only uniform over finite time intervals. This is to be expected when working with Fourier integral operators.
\end{remark}

\begin{remark}[Scalar functions vs half-densities]
In microlocal analysis and spectral theory it is often convenient to work with operators acting on half-densities, as opposed to scalar functions. Our construction is easily adaptable to half-densities as follows.

\begin{itemize}
\item 
Replace the Laplacian on functions $\Delta$ with the corresponding operator on half-densities
\begin{equation*}
\label{laplacian on half-densities}
\widetilde{\Delta}=\rho(x)^{1/2}\,\Delta\,\rho(x)^{-1/2}.
\end{equation*}
\item
Replace the weight $w$ with
\begin{equation*}\label{tilde w}
\widetilde{w}=\left[
{\det}^2 \!
\left(
\varphi_{x^\alpha \eta_\beta}
\right)
\right]^{1/4}\,.
\end{equation*}
Note that $\widetilde{w}$ is now a $\frac12$-density in $x$ and a $\,-\frac12$-density in $y$.
\item Seek the integral kernel of the propagator as an oscillatory integral of the form
\begin{equation*}
\widetilde{\mathcal{I}}_\varphi(\mathfrak{a})=\int \er^{\ir \,\varphi}\,\mathfrak{a}\,\widetilde{w}\,\dbar \eta\,.
\end{equation*}
Note that $\widetilde{\mathcal{I}}_\varphi(\mathfrak{a})$ is a half-density both in $x$ and in $y$.
\item Carry out the above algorithm.
\end{itemize}
It can be shown that we end up with the same full symbol
of the wave propagator as when working with scalar functions.
\end{remark}

\begin{remark}
By carrying out the integration in $\eta$ in \eqref{Lagrangian distribution for wave} for $x$ sufficiently close to $y$ one obtains the well-known Hadamard expansion, see, e.g., \cite{berard} and \cite[Remark~2.5.5]{bar}. Our construction provides an explicit global version of the known local expansion.
\end{remark}

\section{Invariant representation of the identity operator}
\label{Invariant representation of the identity operator}

Step three of our algorithm described in
Section~\ref{The global invariant symbol of the propagator}
involves initial conditions determined by the symbol of the identity operator,
which appears in our construction
as a pseudo\-differential operator
written in the form
\begin{equation*}
\label{Lagrangian distribution for wave will disappear}
\int_{T'M}\er^{\ir \varphi(0,x;y,\eta;\epsilon)}
\,\mathfrak{s}(y,\eta;\epsilon)\,\chi(0,x;y,\eta)
\,w(0,x;y,\eta;\epsilon)\,
\,(\,\cdot\,)\,\rho(y)\,\dr y\,\dbar\eta
\end{equation*}
with the Levi-Civita phase function and some symbol $\mathfrak{s}$,
cf.~\eqref{Lagrangian distribution for wave}.
Recall that $\chi$ is a cut-off and $w$ is defined by formula \eqref{weight w}.
Note also that coordinate systems $x$ and $y$ may be different.

Invariant representation 
of pseudo\-differential operators on manifolds
is not a well studied subject.
Existing literature comprises
\cite{McSa}
and
\cite{DLS},
though invariant representations come there in slightly
different forms.
The aim of this section is to establish a few results in this direction
for the identity operator.

Clearly, the principal symbol of the identity operator is
\begin{equation}
\label{ principal symbol of the identity operator is 1}
\mathfrak{s}_0(y,\eta)=1,
\end{equation}
irrespective of the choice of the phase function. In general, one would expect subleading homogeneous components of the symbol to depend on the phase function. This turns out not to be the case for $\mathfrak{s}_{-1}$, which is zero for any choice of phase function.

\begin{theorem}\label{theorem subprincipal identity FIO}
Let $\phi\in C^{\infty}(M\times T'M;\mathbb{C})$ be a positively homogeneous function (in momentum) of degree 1 satisfying the conditions
\begin{enumerate}
\item[(a)] $\phi(x;y,\eta)=(x-y)^\alpha\,\eta_\alpha + O(\|x-y\|^2)$\,,
\item[(b)] $\operatorname{Im} \phi\geq 0$.
\end{enumerate}
In stating condition (a) we use the same local coordinates for $x$ and $y$.

Consider a pseudodifferential operator
\begin{equation}\label{identity as FIO}
(\mathfrak{I}_{\phi,s}\, f)(x) = \int_{T'M}
\er^{\ir\, \phi(x;y,\eta)}
\,\mathfrak{s}(y,\eta)
\,\chi(x;y,\eta)
\,v(x;y,\eta)
\,f(y)\,\dr y\,\dbar \eta\,,
\end{equation}
where $\mathfrak{s}\sim \sum_{k\in \mathbb{N}_0} \mathfrak{s}_{-k}\in S^0(T'M)$,
$\chi$ is a cut-off
and
\begin{equation}
\label{definition of v}
v(x;y,\eta)=\rho(x)^{-1/2}\,\rho(y)^{1/2} \,[{\det}^2 \phi_{x\eta}]^{1/4}\,.
\end{equation}
If $\,\mathfrak{I}_{\phi,s}-\operatorname{Id}$
is an infinitely smoothing operator,
then
\begin{equation*}
\label{subleading term for identity operator}
\mathfrak{s}_{-1}(y,\eta)=0.
\end{equation*}
\end{theorem}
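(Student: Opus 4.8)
The plan is to compute the symbol $\mathfrak{s}$ of the operator $\mathfrak{I}_{\phi,s}$ directly via the standard stationary-phase reduction and then extract its degree-$(-1)$ homogeneous component, forcing it to vanish by the hypothesis $\mathfrak{I}_{\phi,s}=\operatorname{Id}$ modulo smoothing. Concretely, I would first exploit the invariance built into the setup: since the claim is coordinate-free, it suffices to prove it in geodesic normal coordinates $x$ centred at $y$, where $\rho(y)=1$, $\rho_{,\alpha}(y)=0$, and the metric has the familiar expansion $g_{\alpha\beta}(x)=\delta_{\alpha\beta}-\tfrac13 R_{\alpha\mu\nu\beta}(y)\,x^\mu x^\nu+O(\|x\|^3)$; this also makes $\rho(x)^{-1/2}=1+O(\|x-y\|^2)$, so the factor $\rho(x)^{-1/2}\rho(y)^{1/2}$ in $v$ contributes nothing at the order we care about.

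Next I would Taylor-expand the phase $\phi(x;y,\eta)$ around $x=y$ using hypothesis (a): $\phi=(x-y)^\alpha\eta_\alpha+\tfrac12 Q_{\alpha\beta}(y,\eta)(x-y)^\alpha(x-y)^\beta+O(\|x-y\|^3)$ for some symmetric two-tensor $Q$ (homogeneous of degree $1$ in $\eta$, with $\operatorname{Im} Q\ge 0$). From this, $\phi_{x^\alpha\eta_\beta}|_{x=y}=\delta_\alpha^\beta$, so $[\det{}^2\phi_{x\eta}]^{1/4}=1+(\text{linear in }(x-y))+O(\|x-y\|^2)$, and the branch is fixed near $t=0$ by the normalisation in the definition of $w$. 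Then I would apply the operator $\mathfrak{I}_{\phi,s}$ to a test function $f$, change variables, and run the stationary-phase / symbol-reduction procedure — equivalently, invoke the amplitude-to-symbol operator $\mathfrak{S}$ from Appendix~\ref{The amplitude-to-symbol operator} specialised to $t=0$. The key point is that $\mathfrak{s}_{-1}$ is expressed as $\mathfrak{s}_{-1}=\mathfrak{S}_0 a_{-1}+\mathfrak{S}_{-1}a_0$ where $a$ is the amplitude produced when one rewrites $\operatorname{Id}$ (i.e. the kernel $\rho(y)^{-1}\delta(x-y)$, or rather its oscillatory-integral representation) against this phase; one then checks that the contributions linear in $(x-y)$ from the phase expansion, from $[\det{}^2\phi_{x\eta}]^{1/4}$, and from $v$ all either vanish on the diagonal after the $\mathfrak{S}_0$ restriction $x=x^*$ (here $x^*=y$ since the flow is trivial at $t=0$) or cancel pairwise. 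The essential cancellation is that the would-be degree-$(-1)$ term is a total $\eta$-derivative of something odd, hence integrates to zero — this is exactly the mechanism by which the subprincipal symbol of $\operatorname{Id}$ is phase-independent.

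The main obstacle I anticipate is bookkeeping the linear-in-$(x-y)$ terms carefully enough to see the cancellation, since there are three sources (the quadratic term $Q_{\alpha\beta}$ in the phase, the first-order term in $[\det{}^2\phi_{x\eta}]^{1/4}$, and in principle a first-order term in $v$ which actually vanishes in normal coordinates) and the operator $\mathfrak{S}_{-1}$ itself involves $\partial_{\eta_\beta}$ hitting $w$ and the $L_\beta=[(\varphi_{x\eta})^{-1}]_\alpha{}^\beta\partial_{x^\beta}$ operators. The clean way to organise this is to note that $\mathfrak{s}_{-1}$, being the degree-$(-1)$ part, must be a scalar built linearly from $Q_{\alpha\beta}$ and its derivatives, homogeneous of degree $-1$ in $\eta$; by contracting indices one sees the only candidates are multiples of $Q_{\alpha\beta}\,\partial^2 h/\partial\eta_\alpha\partial\eta_\beta$-type contractions, and a direct evaluation of the $\mathfrak{S}$-operator shows the coefficient is zero. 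Alternatively — and perhaps more transparently — one can argue that since the space of admissible phases is path-connected (as $\mathcal{L}_h$ is, by \cite[Lemma~1.7]{LSV}, and analogously here), $\mathfrak{s}_{-1}$ depends continuously on $\phi$; computing it for the trivial phase $\phi=(x-y)^\alpha\eta_\alpha$ (for which $v\equiv 1$ in normal coordinates and the oscillatory integral is literally the Fourier inversion formula giving $\mathfrak{s}\equiv 1$) yields $\mathfrak{s}_{-1}=0$, and then a variational computation — differentiating the identity $\mathfrak{I}_{\phi,s}=\operatorname{Id}$ along a path of phases and showing the first variation of $\mathfrak{s}_{-1}$ is forced to vanish — upgrades this to all $\phi$. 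I would pursue the direct stationary-phase computation as the primary route and keep the connectedness argument as a sanity check on the sign and coefficient.
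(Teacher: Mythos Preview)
Your primary route is correct in spirit and would succeed, but it is considerably heavier than the paper's argument and imports machinery that is not needed. Two points of comparison are worth making.

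First, the paper does \emph{not} pass to geodesic normal coordinates, and the Riemannian geometry (metric expansion, curvature) plays no role whatsoever in the proof. Instead the paper introduces the dual operator $\mathfrak{I}_{\phi,s}'$ and fixes an arbitrary point $P$ with local coordinates $y=0$; the condition $\mathfrak{I}_{\phi,s}'=\operatorname{Id}$ modulo smoothing becomes the distributional identity
\[
\int \er^{\ir\,\phi(x;0,\eta)}\,\mathfrak{s}(0,\eta)\,\chi\,\sqrt{\det\phi_{x\eta}}\ \dbar\eta
\;=\;
\int \er^{\ir\,x^\alpha\eta_\alpha}\,\dbar\eta
\]
(the density factors $\rho(x),\rho(y)$ having been absorbed into a redefinition $\kappa=\rho^{1/2}k$ of the test function). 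This sidesteps your bookkeeping of $\rho(x)^{-1/2}$ entirely.

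Second, the paper does not invoke the amplitude-to-symbol operator $\mathfrak{S}$ at all. It simply expands
\[
\er^{\ir\phi}=\er^{\ir x^\alpha\eta_\alpha}\Bigl[1+\tfrac{\ir}{2}\phi_{x^\mu x^\nu}(0;0,\eta)\,x^\mu x^\nu+O(\|x\|^3)\Bigr],
\qquad
\sqrt{\det\phi_{x\eta}}=1+\tfrac12\phi_{x^\alpha x^\beta\eta_\alpha}(0;0,\eta)\,x^\beta+O(\|x\|^2),
\]
substitutes, and integrates by parts (each power of $x$ trading for $-\ir\,\partial_\eta$). The two resulting degree-$(-1)$ contributions are $-\tfrac{\ir}{2}\phi_{x^\mu x^\nu\eta_\mu\eta_\nu}$ and $+\tfrac{\ir}{2}\phi_{x^\alpha x^\beta\eta_\alpha\eta_\beta}$, which cancel on the nose, forcing $\mathfrak{s}_{-1}(0,\eta)=0$. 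This is exactly the cancellation you anticipated between the quadratic phase term and the determinant term, but obtained by a two-line integration by parts rather than by unpacking $\mathfrak{S}_{-1}$.

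Your alternative connectedness/variational argument is unnecessary and, as stated, incomplete: you would need to show the first variation of $\mathfrak{s}_{-1}$ vanishes \emph{along the entire path}, not just at the linear phase. Also, your remark about ``$Q_{\alpha\beta}\,\partial^2 h/\partial\eta_\alpha\partial\eta_\beta$-type contractions'' is misplaced---there is no Hamiltonian $h$ in this theorem, which concerns only the identity operator.
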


\begin{remark}
It is easy to see that
the quantity defined by formula \eqref{definition of v} is a scalar function
$v:M\times T'M\to\mathbb{C}$. The branch of the complex root is chosen so
that $v=1$ on the diagonal $x=y$.
\end{remark}

\begin{proof}[Proof of Theorem~\ref{theorem subprincipal identity FIO}]
Let us define the dual pseudodifferential operator
$\mathfrak{I}_{\phi,s}'$
via the identity
\[
\int_M
\bigl[
 k(x) 
\bigr]
\bigl[
(\mathfrak{I}_{\phi,s}\, f)(x)
\bigr]
\,
\rho(x)
\,
\dr x
=
\int_M
\bigl[
(\mathfrak{I}_{\phi,s}'\, k)(y)
\bigr]
\bigl[
 f(y) 
\bigr]
\,
\rho(y)
\,
\dr y\,,
\]
where $f,k:M\to \mathbb{C}$ are smooth functions.
The explicit formula for the  pseudodifferential operator
$\mathfrak{I}_{\phi,s}'$ reads
\begin{equation*}\label{identity as FIO dual}
(\mathfrak{I}_{\phi,s}'\, k)(y) = \int_{M\times T_y'M}
\er^{\ir\, \phi(x;y,\eta)}
\,\mathfrak{s}(y,\eta)
\,\chi(x;y,\eta)
\,u(x;y,\eta)
\,k(x)\,\dr x\,\dbar \eta\,,
\end{equation*}
where
\begin{equation}
\label{definition of u}
u(x;y,\eta)
=
\rho(x)\,\rho(y)^{-1}\,v(x;y,\eta)
=
\rho(x)^{1/2}\,\rho(y)^{-1/2} \,[{\det}^2 \phi_{x\eta}]^{1/4}\,.
\end{equation}
Of course, the condition that
$\,\mathfrak{I}_{\phi,s}-\operatorname{Id}$
is an infinitely smoothing operator
is equivalent to the condition that
$\,\mathfrak{I}_{\phi,s}'-\operatorname{Id}$
is an infinitely smoothing operator.

Let us now fix an arbitrary point $P\in M$ and work in local coordinates $y$ such that $y=0$ at $P$.
Furthermore, let us use the same local coordinates for $x$ and for $y$.
Consider the map
\begin{equation}
\label{distribution 1}
k\mapsto(\mathfrak{I}_{\phi,s}'\, k)(0).
\end{equation}
The map \eqref{distribution 1} is a distribution,
a continuous linear functional.
We want the distribution \eqref{distribution 1} to approximate, modulo $C^\infty$, the delta distribution,
i.e.~we want
\begin{equation}
\label{distribution 2}
\int
\er^{\ir\, \phi(x;0,\eta)}
\,\mathfrak{s}(0,\eta)
\,\chi(x;0,\eta)
\,u(x;0,\eta)
\,k(x)\,\dr x\,\dbar \eta\,
=k(0)
\end{equation}
modulo a smooth functional.
Substituting \eqref{definition of u} into \eqref{distribution 2} we rewrite the latter as
\begin{equation}
\label{distribution 3}
\int
\er^{\ir\, \phi(x;0,\eta)}
\,\mathfrak{s}(0,\eta)
\,\chi(x;0,\eta)
\,\kappa(x)\,\sqrt{\det\phi_{x\eta}}
\ \dr x\,\dbar \eta\,
=\kappa(0)\,,
\end{equation}
where $\kappa(x)=\rho(x)^{1/2}\,k(x)$
and the branch of the square root is chosen so that $\sqrt{\det\phi_{x\eta}}=1$ at $x=0$;
see also Remark~\ref{remark on fourth root}.
Formula \eqref{distribution 3} is, in turn, equivalent to
\begin{equation}
\label{distribution 4}
\int
\er^{\ir\, \phi(x;0,\eta)}
\,\mathfrak{s}(0,\eta)
\,\chi(x;0,\eta)
\,\sqrt{\det\phi_{x\eta}}
\ \dbar \eta\,
=
\int
\er^{\ir\,x^\alpha\eta_\alpha}
\,\dbar \eta\,.
\end{equation}
The integrals in  \eqref{distribution 4} are understood as distributions in the variable $x$
and equality is understood as equality modulo a smooth distribution.

The complex exponential in
\eqref{distribution 4}
admits the expansion
\begin{equation}\label{identity FIO temp 1}
\er^{\ir \,\phi(x;0,\eta)}=\er^{\ir\,x^\alpha\eta_\alpha}\,\left[1 + \frac{\ir}{2}\, \phi_{x^\mu x^\nu}(0;0,\eta)\,x^\mu\,x^\nu +O(\|x\|^3) \right]\,.
\end{equation}
Furthermore,
\begin{equation}\label{identity FIO temp 2}
\sqrt{\det\phi_{x\eta}}\,(x;0,\eta)= 1+ \frac12\,\phi_{x^\alpha x^\beta\eta_\alpha}(0;0,\eta)\,x^\beta +O(\|x\|^2).
\end{equation}
Substituting \eqref{identity FIO temp 1} and \eqref{identity FIO temp 2} into
the LHS of \eqref{distribution 4}
and integrating by parts we get
\begin{multline*}
\int \er^{\ir\,x^\gamma\eta_\gamma}\,\Bigl( 1+\mathfrak{s}_{-1}(0,\eta)
-
\frac{\ir}{2}\, \phi_{x^\mu x^\nu \eta_\mu \eta_\nu}(0;0,\eta)
\\
+
\frac{\ir}2\,\phi_{x^\alpha x^\beta\eta_\alpha\eta_\beta}(0;0,\eta)
\,+O(\|\eta\|^{-2})\Bigr)\,\dbar\eta\\
\qquad=\int \er^{\ir\,x^\gamma\eta_\gamma}\,\bigl( 1+ \mathfrak{s}_{-1}(0,\eta)+O(\|\eta\|^{-2})\bigr)\,\dbar\eta\,,
\end{multline*}
from which we conclude that $\mathfrak{s}_{-1}(0,\eta)=0$.

We have shown that $\mathfrak{s}_{-1}$ vanishes identically
on the punctured
cotangent fibre at the point $P\in M$.
As the point $P$ is arbitrary
and $\mathfrak{s}_{-1}$ is a scalar function,
we conclude that
$\,\mathfrak{s}_{-1}(y,\eta)=0$, $\forall(y,\eta)\in T'M$.
\end{proof}

Stronger results can be established for the Levi-Civita phase function.

\begin{theorem}
\label{symbol identity Levi-Civita epsilon not 0}
The sub-subleading contribution to
the symbol of the identity operator written as a pseudo\-differential operator \eqref{identity as FIO} with the Levi-Civita phase function
$\varphi(0,x;y,\eta;\epsilon)$ is
\begin{equation}\label{subsubsymbol identity for Levi-Civita}
\mathfrak{s}_{-2}(y,\eta)=
\frac{(d-1)\,(d-2)\,\epsilon^2}{8\,g^{\alpha\beta}(y)\,\eta_\alpha\eta_\beta}\,.
\end{equation}
\end{theorem}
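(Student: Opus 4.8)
The plan is to follow the argument in the proof of Theorem~\ref{theorem subprincipal identity FIO}, carrying the expansion one order further and exploiting that, in geodesic normal coordinates, the Levi--Civita phase function at $t=0$ is \emph{exactly} quadratic. As in that proof, $\mathfrak{s}_{-2}$ is a scalar function on $T'M$, so it suffices to compute $\mathfrak{s}_{-2}(P,\cdot)$ at an arbitrary point $P\in M$; I would work in geodesic normal coordinates centred at $P$, so that $y=0$, $g_{\mu\nu}(0)=\delta_{\mu\nu}$, $\rho(0)=1$ and $h(0,\eta)=|\eta|:=(\delta^{\alpha\beta}\eta_\alpha\eta_\beta)^{1/2}$. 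By \eqref{phase function in normal coordinates}, with $x^*(0;y,\eta)=y=0$ and $\xi^*(0;y,\eta)=\eta$, the phase function is then, exactly,
\[
\phi(x;0,\eta):=\varphi(0,x;0,\eta;\epsilon)=x^\alpha\eta_\alpha+\frac{\ir\,\epsilon}{2}\,|\eta|\,\delta_{\mu\nu}x^\mu x^\nu ,
\]
and a direct computation (determinant of the identity plus a rank-one perturbation) gives, again exactly, $\det\phi_{x\eta}=1+\ir\,\epsilon\,|\eta|^{-1}x^\gamma\eta_\gamma$.

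Exactly as in the derivation of \eqref{distribution 4}, the requirement that $\mathfrak{I}_{\varphi,s}-\operatorname{Id}$ be infinitely smoothing reduces, modulo a smooth distribution in $x$, to
\[
\int \er^{\ir\,x^\alpha\eta_\alpha}\,\er^{-\frac{\epsilon}{2}|\eta|\,\delta_{\mu\nu}x^\mu x^\nu}\,\mathfrak{s}(0,\eta)\,\sqrt{\det\phi_{x\eta}}\ \dbar\eta
=\int \er^{\ir\,x^\alpha\eta_\alpha}\,\dbar\eta ,
\]
with $\chi$ set to $1$ near $x=0$ (its complement contributing a smooth term) and the branch of $\sqrt{\det\phi_{x\eta}}=(1+\ir\epsilon|\eta|^{-1}x^\gamma\eta_\gamma)^{1/2}$ chosen to equal $1$ at $x=0$. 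I would then Taylor-expand $\er^{-\frac{\epsilon}{2}|\eta|\,\delta_{\mu\nu}x^\mu x^\nu}$ and $(1+\ir\epsilon|\eta|^{-1}x^\gamma\eta_\gamma)^{1/2}$ in powers of $x$ and reduce the left-hand side to an $x$-independent symbol by the standard identity $\int x^{\boldsymbol\alpha}f(\eta)\,\er^{\ir x^\gamma\eta_\gamma}\dbar\eta=\int \ir^{|\boldsymbol\alpha|}\big(\partial_\eta^{\boldsymbol\alpha}f\big)(\eta)\,\er^{\ir x^\gamma\eta_\gamma}\dbar\eta$. The equation then says that this symbol equals $1$ modulo $S^{-\infty}$, and it remains to equate homogeneous components in $\eta$.

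With the already established $\mathfrak{s}_0=1$ and (by Theorem~\ref{theorem subprincipal identity FIO}) $\mathfrak{s}_{-1}=0$ in hand, the components of degree $0$ and $-1$ hold automatically — at degree $-1$ this is the cancellation of the $x$-linear part of $\sqrt{\det\phi_{x\eta}}$ against the $x$-quadratic part of $\er^{-\frac{\epsilon}{2}|\eta|\,\delta_{\mu\nu}x^\mu x^\nu}$, each contributing $\pm\frac{\epsilon(d-1)}{2|\eta|}$. At degree $-2$ the equation becomes $\mathfrak{s}_{-2}(0,\eta)+C(\eta)=0$, where $C(\eta)$ collects the contributions of the higher-order-in-$x$ terms acting on $\mathfrak{s}_0=1$. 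The decisive point is that the \emph{only} surviving contribution is that of the $x$-quadratic, $\eta$-homogeneous-of-degree-zero part of $\sqrt{\det\phi_{x\eta}}$, namely $-\frac18\big(\ir\epsilon|\eta|^{-1}x^\gamma\eta_\gamma\big)^2=\frac{\epsilon^2}{8|\eta|^2}\,(x^\mu\eta_\mu)(x^\nu\eta_\nu)$: the terms involving $\mathfrak{s}_{-1}$ vanish because $\mathfrak{s}_{-1}=0$, and the $x$-cubic and $x$-quartic terms vanish because they force one to differentiate, in $\eta$, a polynomial more times than its degree. It then remains to evaluate
\[
C(\eta)=\ir^2\sum_{\mu,\nu}\partial_{\eta_\mu}\partial_{\eta_\nu}\!\left[\frac{\epsilon^2}{8|\eta|^2}\,\eta_\mu\eta_\nu\right]
=-\frac{\epsilon^2}{8}\sum_{\mu,\nu}\partial_{\eta_\mu}\partial_{\eta_\nu}\!\left(\frac{\eta_\mu\eta_\nu}{|\eta|^2}\right),
\]
which is elementary: summing the inner derivative over $\nu$ gives $(d-1)\,\eta_\mu|\eta|^{-2}$, and summing the outer derivative over $\mu$ then gives $(d-1)(d-2)|\eta|^{-2}$. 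Hence $C(\eta)=-\frac{(d-1)(d-2)\epsilon^2}{8|\eta|^2}$, so $\mathfrak{s}_{-2}(0,\eta)=\frac{(d-1)(d-2)\epsilon^2}{8|\eta|^2}$; since in geodesic normal coordinates centred at $P$ one has $|\eta|^2=g^{\alpha\beta}(P)\eta_\alpha\eta_\beta$ and $P$ was arbitrary, this is precisely \eqref{subsubsymbol identity for Levi-Civita}.

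The main obstacle I anticipate is the bookkeeping in the degree-$(-2)$ equation: identifying exactly which terms of the double expansion can land at homogeneity $-2$ after the $x\mapsto\ir\partial_\eta$ reduction, and verifying that all of them except the single quadratic square-root term drop out, either by $\mathfrak{s}_{-1}=0$ or by the polynomial-degree argument. The remaining ingredients — that $\chi$ may be replaced by $1$ near the diagonal up to a smooth error, the normalisation of the branch of the root, and the justification of the $x\mapsto\ir\partial_\eta$ reduction as an identity of oscillatory integrals modulo $C^\infty$ — are standard and are handled exactly as in the proof of Theorem~\ref{theorem subprincipal identity FIO}.
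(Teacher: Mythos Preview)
Your proposal is correct and follows essentially the same approach as the paper's proof: fix a point, pass to geodesic normal coordinates so that the Levi--Civita phase at $t=0$ is exactly quadratic, reduce to the identity \eqref{distribution 4}, expand the exponential and $\sqrt{\det\phi_{x\eta}}$ in powers of $x$, integrate by parts, and identify the single surviving degree-$(-2)$ contribution as $-\tfrac{\epsilon^2}{8}\,\partial_{\eta_\mu}\partial_{\eta_\nu}\big(\eta_\mu\eta_\nu/|\eta|^2\big)$. Your exposition is in fact a bit more explicit than the paper's about \emph{why} the cross terms drop out (the $\mathfrak{s}_{-1}=0$ and polynomial-degree arguments), but the route is the same.
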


\begin{proof}
Let us fix a point $P\in M$ and argue as in the proof of
Theorem~\ref{theorem subprincipal identity FIO}, arriving at \eqref{distribution 4}.
Note that in this argument we did not specify the choice of a local coordinate system
in a neighbourhood of the point $P$.

Let us choose geodesic normal coordinates centred at $P$.
Then the explicit formula for the phase function appearing in \eqref{distribution 4} reads
\begin{equation*}
\label{symbol identity Levi-Civita epsilon not 0 proof formula 1}
\phi(x;0,\eta)=\varphi(0,x;0,\eta;\epsilon)
=x^\alpha\eta_\alpha
+\dfrac{\ir\,\epsilon}{2}\,\|\eta\|\,\|x\|^2\,,
\end{equation*}
where $\|\,\cdot\,\|$ stands for the Euclidean norm, see also \eqref{phase function in normal coordinates}.

The complex exponential in
\eqref{distribution 4}
admits the expansion
\begin{equation}
\label{symbol identity Levi-Civita epsilon not 0 proof formula 2}
\er^{\ir\,\phi(x;0,\eta)}=\er^{\ir\,x^\alpha\eta_\alpha}\,
\left[
1
-
\dfrac{\epsilon}{2}\,\|\eta\|\,\|x\|^2
+
\dfrac{\epsilon^2}{8}\,\|\eta\|^2\,\|x\|^4
+
O(\|x\|^6)
\right]\,.
\end{equation}

We have
\begin{equation}
\label{symbol identity Levi-Civita epsilon not 0 proof formula 3}
(\phi_{x^\alpha\eta_\beta})(x;0,\eta)
=
\delta_\alpha{}^\beta
+
\ir\,\epsilon\,\delta_{\alpha\mu}\,\delta^{\beta\nu}
\dfrac{\eta_\nu}{\|\eta\|}
\,x^\mu\,.
\end{equation}
It is well know that,
given the identity matrix $I$ and arbitrary small square matrix $A$ of the same size,
the expansion for $\det(I+A)$ reads
\begin{equation}
\label{symbol identity Levi-Civita epsilon not 0 proof formula 4}
\det(I+A)
=
1
+
\operatorname{tr}A
+
\frac12
\bigl[
(\operatorname{tr}A)^2
-
\operatorname{tr}(A^2)
\bigr]
+
O(\|A\|^3).
\end{equation}
Formulae
\eqref{symbol identity Levi-Civita epsilon not 0 proof formula 3}
and
\eqref{symbol identity Levi-Civita epsilon not 0 proof formula 4}
imply
\begin{equation}
\label{symbol identity Levi-Civita epsilon not 0 proof formula 5}
\sqrt{\det\phi_{x\eta}}\,(x;0,\eta)=
1
+
\dfrac{\ir\epsilon}{2\|\eta\|}\,x^\alpha\eta_\alpha
+
\dfrac{\epsilon^2}{8\|\eta\|^2}\,(x^\beta\eta_\beta)^2
+O(\|x\|^3).
\end{equation}
Substituting
\eqref{symbol identity Levi-Civita epsilon not 0 proof formula 2}
and
\eqref{symbol identity Levi-Civita epsilon not 0 proof formula 5} into
the LHS of \eqref{distribution 4}
and integrating by parts we get
\begin{multline*}
\label{symbol identity Levi-Civita epsilon not 0 proof formula 6}
\int \er^{\ir\,x^\gamma\eta_\gamma}\,\Bigl( 1
+\mathfrak{s}_{-2}(0,\eta)
-
\frac{\epsilon^2}{8}
\left(
\frac{\eta_\alpha\eta_\beta}{\|\eta\|^2}
\right)_{\eta_\alpha\eta_\beta}
+O(\|\eta\|^{-3})\Bigr)\dbar\eta
\\
=\int \er^{\ir\,x^\gamma\eta_\gamma}\,
\left( 1+ \mathfrak{s}_{-2}(0,\eta)
-
\frac{(d-1)\,(d-2)\,\epsilon^2}{8\|\eta\|^2}
+O(\|\eta\|^{-3})\right)\dbar\eta\,,
\end{multline*}
which gives us \eqref{subsubsymbol identity for Levi-Civita}.
\end{proof}

The algorithm described in the proof of
Theorem~\ref{symbol identity Levi-Civita epsilon not 0}
allows one to calculate explicitly $\mathfrak{s}_{-3},\mathfrak{s}_{-4},\ldots$ but the calculations
become cumbersome. We list the resulting formulae for the special case $d=2$:
\begin{equation}
\begin{aligned}
&\mathfrak{s}_{-3}(y,\eta)=\dfrac{1}{2^3}\,\dfrac{\epsilon^3}{(g^{\alpha\beta}(y)\,\eta_\alpha\eta_\beta)^{3/2}}\,,
\qquad
&\mathfrak{s}_{-4}(y,\eta)=0\,,
\\
&\mathfrak{s}_{-5}(y,\eta)=
\dfrac{3^2\times 5}{2^6}\,
\dfrac{\epsilon^5}{(g^{\alpha\beta}(y)\,\eta_\alpha\eta_\beta)^{5/2}}\,,
\qquad
&\mathfrak{s}_{-6}(y,\eta)=0\,,
\\
&\mathfrak{s}_{-7}(y,\eta)=
\dfrac{3^2\times 5^2\times\,13}{2^{10}}\,
\dfrac{\epsilon^7}{(g^{\alpha\beta}(y)\,\eta_\alpha\eta_\beta)^{7/2}}\,,
\qquad
&\mathfrak{s}_{-8}(y,\eta)=0\,,
\\
&\mathfrak{s}_{-9}(y,\eta)=
\dfrac{3^3\times 5^2\times 7^2\times 47}{2^6}\,
\dfrac{\epsilon^9}{(g^{\alpha\beta}(y)\,\eta_\alpha\eta_\beta)^{9/2}}\,,
\qquad
&\mathfrak{s}_{-10}(y,\eta)=0\,.
\end{aligned}
\end{equation}

We have an
even stronger result for the real-valued Levi-Civita phase function.
The following theorem holds for Riemannian manifolds $M$ of arbitrary dimension $d$.

\begin{lemma}\label{symbol identity Levi-Civita epsilon=0}
The full symbol of the identity operator written as a pseudodifferential operator \eqref{identity as FIO} with the real-valued Levi-Civita phase function $\varphi(0,x;y,\eta;0)$ is
\begin{equation}\label{symbol identity for real Levi-Civita}
\mathfrak{s}(y,\eta)=1.
\end{equation}
\end{lemma}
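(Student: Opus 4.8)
\textbf{Proof strategy for Lemma~\ref{symbol identity Levi-Civita epsilon=0}.}
The plan is to run the computation of Theorem~\ref{symbol identity Levi-Civita epsilon not 0} verbatim but now with $\epsilon=0$, and show that \emph{every} homogeneous component $\mathfrak{s}_{-k}$ with $k\ge1$ vanishes. First I would fix an arbitrary point $P\in M$, choose geodesic normal coordinates centred at $P$, and use the same coordinates for $x$ and $y$, exactly as in the proof of Theorem~\ref{symbol identity Levi-Civita epsilon not 0}. The key observation is that in these coordinates the real-valued Levi-Civita phase function degenerates dramatically: by \eqref{phase function in normal coordinates} with $\epsilon=0$ and $x^*(0;y,\eta)=y=0$, one has $\varphi(0,x;0,\eta;0)=x^\alpha\eta_\alpha$ \emph{exactly}, not merely to leading order. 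Hence $\er^{\ir\varphi}=\er^{\ir x^\alpha\eta_\alpha}$ with no correction terms, and $\phi_{x^\alpha\eta_\beta}=\delta_\alpha{}^\beta$ identically, so $\sqrt{\det\phi_{x\eta}}\equiv1$.

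Consequently the master identity \eqref{distribution 4} collapses to
\[
\int \er^{\ir\,x^\alpha\eta_\alpha}\,\mathfrak{s}(0,\eta)\,\chi(x;0,\eta)\,\dbar\eta
\overset{\mod C^\infty}{=}
\int \er^{\ir\,x^\alpha\eta_\alpha}\,\dbar\eta\,.
\]
Since $\chi$ equals $1$ on a conical neighbourhood of $\{(0,0;0,\eta)\}$ for $|\eta|\ge1$ and changing $\chi$ alters the left-hand side only by a smooth distribution, one may replace $\chi$ by $1$ for $|\eta|\ge1$ and absorb the low-frequency part into the smooth error. What remains is the statement that $\int\er^{\ir x^\alpha\eta_\alpha}\,(\mathfrak{s}(0,\eta)-1)\,\dbar\eta$ is a smooth distribution in $x$; since $\mathfrak{s}(0,\,\cdot\,)-1\sim\sum_{k\ge1}\mathfrak{s}_{-k}(0,\,\cdot\,)$ is polyhomogeneous of order $-1$, each non-vanishing homogeneous term would produce a genuine conormal singularity at $x=0$, forcing $\mathfrak{s}_{-k}(0,\eta)=0$ for all $k\ge1$ by the standard argument (matching asymptotic expansions of the two sides term by term, exactly as in the last displays of the proofs of Theorems~\ref{theorem subprincipal identity FIO} and~\ref{symbol identity Levi-Civita epsilon not 0}).

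Finally, since $P$ was arbitrary and $\mathfrak{s}$ is an invariantly defined scalar function on $T'M$, vanishing of all $\mathfrak{s}_{-k}$, $k\ge1$, at every $P$ gives $\mathfrak{s}(y,\eta)=1$ on all of $T'M$, which is \eqref{symbol identity for real Levi-Civita}. The only point requiring care --- the ``main obstacle'', though a mild one --- is the justification that the exact identity $\varphi(0,x;0,\eta;0)=x^\alpha\eta_\alpha$ holds in geodesic normal coordinates centred at $P=y$; this follows from Lemma~\ref{lemma about versions of Re PF}, because in normal coordinates centred at $x^*(0;y,\eta)=y$ the inverse exponential $\exp_{y}^{-1}(x)$ is simply the coordinate vector $x$, so $\langle\xi^*(0;y,\eta),\exp_y^{-1}(x)\rangle=\eta_\alpha x^\alpha$, and the imaginary part vanishes identically at $\epsilon=0$.
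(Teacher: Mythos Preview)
Your proposal is correct and follows precisely the approach the paper indicates: the paper's proof is the single sentence ``Formula \eqref{symbol identity for real Levi-Civita} is established by arguing as in the proof of Theorem~\ref{symbol identity Levi-Civita epsilon not 0},'' and you have spelled out exactly what that argument becomes when $\epsilon=0$. The key simplification you identify --- that in geodesic normal coordinates centred at $y$ the real Levi-Civita phase is \emph{exactly} $x^\alpha\eta_\alpha$, so $\phi_{x\eta}=\delta$ and all correction terms in \eqref{distribution 4} vanish identically --- is the whole content of the lemma, and your justification via Lemma~\ref{lemma about versions of Re PF} is correct.
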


\begin{proof}
Formula \eqref{symbol identity for real Levi-Civita} is established by arguing as in the proof of
Theorem~\ref{symbol identity Levi-Civita epsilon not 0}.
\end{proof}

\section{The $g$-subprincipal symbol of the propagator}
\label{The subprincipal symbol of the propagator}

Sometimes, for particular purposes (e.g.~in spectral theory), one needs
only a few leading homogeneous components of
the full symbol $\mathfrak{a}$. In this section we will revisit and analyse further the construction of Section \ref{The global invariant symbol of the propagator} for the special case of the $g$-subprincipal symbol.

\begin{definition}
We call the scalar function $\mathfrak{a}_{-1}(t;y,\eta;\epsilon)$ appearing in
Definition~\ref{invariant definition of symbol} the \emph{$g$-subprincipal symbol of the wave propagator}.
\end{definition}

Acting with the wave operator
\eqref{definition of the wave operator}
on the oscillatory integral
\begin{equation}\label{oscillatory integral subprincipal}
\int \er^{\ir\,\varphi(t,x;y,\eta;\epsilon)}\, \left( 1+\mathfrak{a}_{-1}(t;y,\eta;\epsilon) \right)\, w(t,x;y,\eta;\epsilon)\, \dbar\eta,
\end{equation}
one obtains a new oscillatory integral 
\begin{equation*}
\int \er^{\ir\,\varphi(t,x;y,\eta;\epsilon)}\,a(t,x;y,\eta;\epsilon)\, w(t,x;y,\eta;\epsilon)\, \dbar\eta,
\end{equation*}
with
\begin{equation}\label{a for subprincipal}
a=(1+\mathfrak{a}_{-1})\,\er^{-\ir\,\varphi} \,\left[\mathcal{P}\,(\er^{\ir\,\varphi}\,w) \right] w^{-1} + (\mathfrak{a}_{-1})_{tt} + 2\,(\mathfrak{a}_{-1})_{t} \left(\ir\, \varphi_t + w_t \,w^{-1}  \right)\,.
\end{equation}
Here and in the following we drop the arguments for the sake of clarity.

\begin{lemma}
\label{lemma decomposition of b}
The function
\[
b(t,x;y,\eta;\epsilon):=\er^{-\ir\,\varphi} \,\left[\mathcal{P}\,(\er^{\ir\,\varphi}\,w) \right] w^{-1} 
\]
decomposes as
$b=b_2+b_1+b_0$, where 
\begin{subequations}
\begin{equation}\label{b2}
b_2= -\,(\varphi_t)^2 + 
\|\nabla \varphi\|_g^2,
\end{equation}
\begin{equation}\label{b1}
\begin{split}
b_1
&
=
\ir\,\left[ \varphi_{tt} - \Delta \varphi +2\, (\ln \,w)_t\,\varphi_t- 2 \,\langle \nabla(\ln\, w) , \nabla \varphi \rangle \right]
\end{split}
\end{equation}
\begin{equation}\label{b0}
b_0= w^{-1}  \left[ w_{tt} - \Delta w\right],
\end{equation}
\end{subequations}
the $b_{k}$, $k=2,1,0$, are positively homogeneous in $\eta$ of degree $k$, and
$\nabla$ is the Levi-Civita connection acting in the variable $x$.
\end{lemma}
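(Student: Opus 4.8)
The plan is to expand the operator $\mathcal{P} = \partial_t^2 - \Delta_x$ acting on the product $\er^{\ir\varphi}w$ by the Leibniz rule, then strip off the overall factor $\er^{-\ir\varphi}w^{-1}$ and group terms by their degree of homogeneity in $\eta$. Since $\varphi$ is homogeneous of degree $1$ and $w$ of degree $0$ in $\eta$, the term $\partial_t^2$ will contribute $-(\varphi_t)^2 \er^{\ir\varphi}w$ (degree $2$), cross terms like $\ir\varphi_{tt}w$ and $2\ir\varphi_t w_t$ (degree $1$), and $w_{tt}$ (degree $0$); similarly $-\Delta_x$ contributes the $x$-derivative counterparts. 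So the first step is simply to write out
\[
\mathcal{P}(\er^{\ir\varphi}w) = \er^{\ir\varphi}\left[ -(\varphi_t)^2 w + \ir(\varphi_{tt}w + 2\varphi_t w_t) + w_{tt} \right] - \Delta_x(\er^{\ir\varphi}w),
\]
and then carry out the analogous expansion of $\Delta_x(\er^{\ir\varphi}w)$.

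The main point requiring care is the computation of $\Delta_x(\er^{\ir\varphi}w)$, since the Laplace--Beltrami operator is second order and involves the metric. I would use the identity $\Delta(fg) = (\Delta f)g + 2\langle\nabla f,\nabla g\rangle + f\Delta g$ valid for the Laplace--Beltrami operator (here $\nabla$ and $\langle\cdot,\cdot\rangle$ and $\|\cdot\|_g^2 = \langle\cdot,\cdot\rangle$ all refer to the $x$-variable and the metric $g$), applied first with $f = \er^{\ir\varphi}$, $g = w$. This gives $\Delta_x(\er^{\ir\varphi}w) = (\Delta_x \er^{\ir\varphi})w + 2\langle \nabla \er^{\ir\varphi}, \nabla w\rangle + \er^{\ir\varphi}\Delta w$. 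Next one computes $\nabla \er^{\ir\varphi} = \ir \er^{\ir\varphi}\nabla\varphi$ and $\Delta_x \er^{\ir\varphi} = \er^{\ir\varphi}\left( \ir \Delta\varphi - \|\nabla\varphi\|_g^2 \right)$, again by the chain rule for $\Delta$. Substituting and collecting the factor $\er^{\ir\varphi}$ yields
\[
\er^{-\ir\varphi}\Delta_x(\er^{\ir\varphi}w) = \left( \ir\Delta\varphi - \|\nabla\varphi\|_g^2 \right)w + 2\ir\langle\nabla\varphi,\nabla w\rangle + \Delta w.
\]

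Combining the two expansions, multiplying through by $w^{-1}$, and rewriting $w_t w^{-1} = (\ln w)_t$ and $w^{-1}\langle\nabla\varphi,\nabla w\rangle = \langle\nabla\varphi,\nabla(\ln w)\rangle$ (using $\nabla(\ln w) = w^{-1}\nabla w$), one reads off the three homogeneous pieces: the degree-$2$ part is $-(\varphi_t)^2 + \|\nabla\varphi\|_g^2$, matching \eqref{b2}; the degree-$1$ part is $\ir[\varphi_{tt} - \Delta\varphi + 2(\ln w)_t\varphi_t - 2\langle\nabla(\ln w),\nabla\varphi\rangle]$, matching \eqref{b1}; and the degree-$0$ part is $w^{-1}[w_{tt} - \Delta w]$, matching \eqref{b0}. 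The homogeneity claims are then immediate from the homogeneity degrees of $\varphi$ (degree $1$), $w$ (degree $0$), and hence $\varphi_t$ (degree $1$), $\varphi_{tt}$ (degree $1$), $\Delta\varphi$ (degree $1$), $\nabla\varphi$ (degree $1$), $(\ln w)_t$ (degree $0$), $\nabla(\ln w)$ (degree $0$), $w_{tt}$ (degree $0$), $\Delta w$ (degree $0$). There is no genuine obstacle here — the only thing to be vigilant about is bookkeeping: making sure all $x$-derivatives are taken with the correct metric contractions and that no cross term is dropped when $\Delta_x$ hits the product of three factors implicitly (the $\er^{\ir\varphi}$, the $w$, and — in later applications — the symbol, though here the symbol is pulled out already).
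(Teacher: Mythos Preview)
Your proof is correct and follows essentially the same approach as the paper: both expand $\partial_t^2(\er^{\ir\varphi}w)$ and $\Delta_x(\er^{\ir\varphi}w)$ via the Leibniz rule, then collect terms by degree of homogeneity in $\eta$. The paper simply writes out the two expansions directly without naming the product rule for $\Delta$, but the computation is identical to yours.
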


\begin{proof}
The contribution to $b$ from the Laplacian reads
\begin{equation}\label{proof components of b eq 1}
\begin{split}
-\er^{-\ir\,\varphi}\,\Delta\left( \er^{\ir \,\varphi}\,w \right)  w^{-1}
&=
-\ir\,\Delta \varphi + \langle\nabla \varphi , \nabla \varphi\rangle - w^{-1}\Delta w -2\ir\,\langle \nabla \varphi, \nabla w \rangle w^{-1}.
\end{split}
\end{equation}
On the other hand, the contribution from the second derivative in time is
\begin{equation}\label{proof components of b eq 2}
\begin{split}
\er^{-\ir\,\varphi}\,\dfrac{\partial^2 }{\partial t^2}\left( \er^{\ir \,\varphi}\,w \right) w^{-1}= -\,(\varphi_t)^2 + \ir\,\varphi_{tt} + 2\,\ir\,\varphi_t\,w_t\,w^{-1} + w_{tt}\,w^{-1}\,.
\end{split}
\end{equation}
Combining \eqref{proof components of b eq 1} and \eqref{proof components of b eq 2}, and singling out terms with the same degree of homogeneity we arrive at \eqref{b2}--\eqref{b0}.
\end{proof}

In terms of the homogeneous components of $b$, formula \eqref{a for subprincipal} reads
\begin{equation}\label{a for subprincipal with bs}
\begin{split}
a= &\,b_2\\
+&\, b_1 + b_2\,\mathfrak{a}_{-1}\\
+&\, b_0+ b_1\,\mathfrak{a}_{-1}+ 2\,\ir\,( \mathfrak{a}_{-1})_t\,\varphi_t\\
+&\, b_0\,\mathfrak{a}_{-1}+( \mathfrak{a}_{-1})_{tt} + 2\,\mathfrak{a}_{-1}\,w_t\, w^{-1}\,,
\end{split}
\end{equation}
where we arranged on different lines contributions of decreasing degree of homogeneity, from $2$ to~$-1$.

Before constructing the amplitude-to-symbol operator and writing down the transport equations, we need a few preparatory lemmata.

\begin{lemma}
\label{lemma PFt along flow}
We have
\begin{equation}\label{PFt along flow}
\left.\varphi_t\right|_{x=x^*}=-h(y,\eta).
\end{equation}
\end{lemma}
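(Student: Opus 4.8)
The plan is to differentiate the Levi-Civita phase function with respect to $t$ and evaluate the result on the flow $x=x^*(t;y,\eta)$. The most convenient representation to work with is the expansion in geodesic normal coordinates centred at $x^*(t;y,\eta)$, namely formula \eqref{phase function in normal coordinates}, which near $x=x^*$ reads
\[
\varphi(t,x;y,\eta;\epsilon)=(x-x^*)^\alpha\,\xi^*_\alpha + \frac{\ir\,\epsilon}{2}\,h(y,\eta)\,\delta_{\mu\nu}\,(x-x^*)^\mu(x-x^*)^\nu + O(\|x-x^*\|^3).
\]
Since we ultimately set $x=x^*$, the quadratic and higher terms — and their $t$-derivatives — vanish upon this substitution, so only the linear term contributes.

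First I would compute $\varphi_t$ from the linear term by the product rule, keeping in mind that the normal-coordinate frame itself moves with $t$; this introduces Christoffel-symbol corrections. A cleaner route avoiding frame bookkeeping is to use the invariant form \eqref{phase function with distance}: write $\varphi = -\tfrac12\langle\xi^*,\operatorname{grad}_z[\operatorname{dist}^2(x,z)]|_{z=x^*}\rangle + \tfrac{\ir\epsilon}{2}h(y,\eta)\operatorname{dist}^2(x^*,x)$, differentiate in $t$, and then set $x=x^*$. The imaginary part contributes nothing because $\operatorname{dist}^2(x^*,x)$ and its first derivatives vanish at $x=x^*$. For the real part, at $x=x^*$ we have $\operatorname{grad}_z[\operatorname{dist}^2(x,z)]|_{z=x^*}=0$, so the only surviving term is the one in which $\partial_t$ hits the gradient factor; using $\partial_t x^*{}^\nu = h_{\xi_\nu}(x^*,\xi^*)$ and the Hessian identity $\partial_{z^\mu}\partial_{z^\nu}[\operatorname{dist}^2(x,z)]|_{z=x=x^*} = 2\,g_{\mu\nu}(x^*)$ yields $\varphi_t|_{x=x^*} = -g_{\mu\nu}(x^*)\,\xi^*{}^\nu\,h_{\xi_\mu}$, wait — more carefully $-\tfrac12\langle\xi^*, \partial_t(\operatorname{grad}\ldots)\rangle = -\tfrac12 \cdot 2\,g_{\mu\nu}(x^*)\,\dot x^*{}^\nu\,\xi^*{}^\mu = -g^{\mu\nu}(x^*)\xi^*_\mu\,h_{\xi_\nu}\cdot(\text{contraction})$, i.e. $-h_{\xi_\nu}(x^*,\xi^*)\,\xi^*_\nu$.

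The final step is to invoke Euler's identity for homogeneous functions: since $h$ is positively homogeneous of degree $1$ in $\xi$, one has $\xi_\nu\,h_{\xi_\nu}(x,\xi) = h(x,\xi)$, and since $h(x^*,\xi^*) = h(y,\eta)$ along the flow (the Hamiltonian is conserved), we obtain $\varphi_t|_{x=x^*} = -h(y,\eta)$, which is \eqref{PFt along flow}. I expect the main obstacle to be purely bookkeeping: correctly tracking the $t$-dependence of the normal-coordinate frame (or, in the invariant approach, correctly evaluating the $t$-derivative of $\operatorname{grad}_z[\operatorname{dist}^2]$ at coincidence, where one must use the known coincidence limits of derivatives of the squared distance function). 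Everything else is an immediate consequence of the conservation of $h$ along the Hamiltonian flow and Euler's homogeneity identity.
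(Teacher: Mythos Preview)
Your computation is correct in substance (despite the mid-sentence hesitation, the chain of equalities does land on $-\xi^*_\nu\,h_{\xi_\nu}(x^*,\xi^*)=-h(x^*,\xi^*)=-h(y,\eta)$), but the route is quite different from the paper's and considerably less efficient.

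The paper does not touch the explicit form of the Levi-Civita phase function at all. Instead it differentiates the defining relation~(i) of Definition~\ref{phase function of class Fh}, namely $\varphi(t,x^*(t;y,\eta);y,\eta)=0$, with respect to $t$ by the chain rule:
\[
0=\left.\varphi_t\right|_{x=x^*}+\left.\varphi_{x^\alpha}\right|_{x=x^*}\,\dot x^{*\alpha}
=\left.\varphi_t\right|_{x=x^*}+\xi^*_\alpha\,h_{\xi_\alpha}(x^*,\xi^*),
\]
invoking condition~(ii) in the second step. Euler's identity and conservation of $h$ then finish the argument exactly as in your final paragraph. This two-line proof uses only the abstract class-$\mathcal{L}_h$ axioms, so it shows immediately that \eqref{PFt along flow} holds for \emph{every} phase function of class $\mathcal{L}_h$, not just the Levi-Civita one --- a point the paper emphasises right after Lemma~\ref{lemma b_2 second order zero}. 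Your direct computation via the distance function, while valid, obscures this generality and forces you to track coincidence limits of derivatives of $\operatorname{dist}^2$ that are entirely avoidable.
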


\begin{proof}
Differentiating in $t$ both sides of (i)
in Definition \ref{phase function of class Fh}, one obtains
\begin{equation*}
\begin{split}
0=& \,\left.\varphi_t\right|_{x=x^*}+\left.\varphi_{x^\alpha}\right|_{x=x^*}\,\dot x^*{}^\alpha\\
=&\, \left.\varphi_t\right|_{x=x^*}+\xi^*_\alpha\,h_{\xi_\alpha}(x^*,\xi^*)\\
=& \, \left.\varphi_t\right|_{x=x^*}+h(x^*,\xi^*)\,.
\end{split}
\end{equation*}
In the second step condition (ii) from Definition \ref{phase function of class Fh} has been used, whereas the last step is a consequence of Euler's theorem on homogeneous functions. Formula \eqref{PFt along flow} now follows from the fact that the Hamiltonian is preserved along the flow.
\end{proof}

\begin{lemma}\label{lemma b_2 second order zero}
The function $b_2$ defined by \eqref{b2} has a second order zero in $x$ at $x=x^*(t;y,\eta)$, namely,
\begin{gather*}
\left.b_2\right|_{x=x^*}=0, \qquad
\left.\nabla b_2\right|_{x=x^*}=0.
\end{gather*}
\end{lemma}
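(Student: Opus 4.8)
The plan is to establish the two identities in Lemma~\ref{lemma b_2 second order zero} by exploiting the fact that $b_2=-(\varphi_t)^2+\|\nabla\varphi\|_g^2$ is, up to sign, the principal symbol of the wave operator $\mathcal{P}$ evaluated on the phase function, and that the Levi-Civita phase function parameterises the (co)geodesic flow. Concretely, I would first recall that $b_2 = h(x,\varphi_x)^2 - (\varphi_t)^2$, since $\|\nabla\varphi\|_g^2 = g^{\alpha\beta}\varphi_{x^\alpha}\varphi_{x^\beta} = h(x,\varphi_x)^2$ by the definition \eqref{hamiltonian laplacian} of the Hamiltonian. Then restricting to $x=x^*$ and using conditions (i)--(ii) of Definition~\ref{phase function of class Fh} (namely $\varphi|_{x=x^*}=0$ and $\varphi_{x^\alpha}|_{x=x^*}=\xi^*_\alpha$) together with Lemma~\ref{lemma PFt along flow} ($\varphi_t|_{x=x^*}=-h(y,\eta)$), we get $\left.b_2\right|_{x=x^*}=h(x^*,\xi^*)^2-h(y,\eta)^2=0$, because the Hamiltonian is conserved along its own flow.

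For the gradient statement, the cleanest route is to differentiate the eikonal-type identity. The Levi-Civita phase function of class $\mathcal{L}_h$ satisfies the Hamilton--Jacobi equation $\varphi_t + h(x,\varphi_x)=0$ on the flow set (this is essentially condition (i) differentiated in $t$, combined with the structure of $\Lambda_h$; one can also derive it from Lemma~\ref{lemma PFt along flow} and the homogeneity properties). More carefully, I would use the expansion of $\varphi$ in powers of $(x-x^*)$ obtained in the proof of Theorem~\ref{theorem phixeta at x=xstar}: to leading orders, $\varphi = (x-x^*)^\alpha\xi^*_\alpha + \frac12\Gamma^\alpha{}_{\mu\nu}(x^*)\,\xi^*_\alpha\,(x-x^*)^\mu(x-x^*)^\nu + \frac{\ir\epsilon h}{2}g_{\mu\nu}(x^*)(x-x^*)^\mu(x-x^*)^\nu + O(\|x-x^*\|^3)$. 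From this one reads off $\varphi_{x^\alpha}|_{x=x^*}=\xi^*_\alpha$ and the Hessian $\varphi_{x^\alpha x^\beta}|_{x=x^*}$ explicitly in terms of Christoffel symbols and the metric; one similarly needs the $t$-derivative expansion, obtained by differentiating this expansion in $t$ and using Hamilton's equations \eqref{Hamiltonian flow}.

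The key computation is then: $\nabla b_2 = 2h(x,\varphi_x)\,\nabla\!\left(h(x,\varphi_x)\right) - 2\varphi_t\,\nabla\varphi_t$, and I would evaluate the covariant derivative $\nabla_\gamma$ componentwise at $x=x^*$. Since $h(x^*,\xi^*)=h(y,\eta)=-\varphi_t|_{x=x^*}$, the two factors in front match, so it suffices to show $\left.\nabla_\gamma\!\left(h(x,\varphi_x)\right)\right|_{x=x^*} = \left.\nabla_\gamma\varphi_t\right|_{x=x^*}$ (with the right sign), i.e.\ that the Hamilton--Jacobi relation $\varphi_t+h(x,\varphi_x)=0$ holds to first order in $(x-x^*)$, not just at $x=x^*$. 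This is where the special structure of the Levi-Civita phase function enters, in particular property (i) in the displayed list after \eqref{phase function in normal coordinates}, $\left.(\Delta\varphi)\right|_{x=x^*}=0$, and property (ii), $\left.(\varphi_{tt})\right|_{x=x^*}=0$; the latter is exactly the second-order compatibility that makes the first-order Taylor coefficients of $\varphi_t$ and $-h(x,\varphi_x)$ agree along the flow. Working in geodesic normal coordinates centred at $x^*(t;y,\eta)$, where the Christoffel symbols vanish at the centre and $\nabla$ reduces to ordinary differentiation at that point, should trivialise the covariant-derivative bookkeeping and reduce the claim to matching Taylor coefficients in the expansion above.

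The main obstacle I anticipate is the bookkeeping for the mixed $x$-$t$ derivatives: one must differentiate the expansion of $\varphi$ near $x=x^*$ with respect to $t$, which moves the expansion point $x^*(t;y,\eta)$ and the covector $\xi^*(t;y,\eta)$, and then take an $x$-gradient and restrict back to $x=x^*$. Keeping track of which terms survive — using $\dot x^* = h_\xi(x^*,\xi^*)$, $\dot\xi^* = -h_x(x^*,\xi^*)$, Euler's relation for the homogeneous function $h$, and the fact that in normal coordinates $g_{\mu\nu}(x^*)=\delta_{\mu\nu}$ and $\partial_\gamma g_{\mu\nu}(x^*)=0$ — is the delicate part. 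An alternative, slicker argument would bypass the expansion entirely: observe that $b_2\,w$ contributes to the top-degree amplitude component $a_{2}=b_2$, and the eikonal equation $\mathfrak{b}_2=0$ from \eqref{transport equations} together with the amplitude-to-symbol reduction (Step two, equations \eqref{mathfrak S0}--\eqref{construction homogeneous components of the reduced amplitude}) forces $b_2$ to vanish to the order needed for the reduction $\mathfrak{S}_0 a_2 = \left.b_2\right|_{x=x^*}$ and the $\mathfrak{S}_{-1}$-correction to annihilate it — but since the present lemma is a prerequisite used to justify that reduction in the subprincipal case, the direct Taylor-expansion proof is the logically honest one and is the route I would write out.
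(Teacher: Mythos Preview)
Your overall approach is the same as the paper's: write $b_2=h(x,\varphi_x)^2-(\varphi_t)^2$, use Lemma~\ref{lemma PFt along flow} and condition~(ii) of Definition~\ref{phase function of class Fh} for the vanishing, and for the gradient reduce to showing that $\varphi_t+h(x,\varphi_x)$ vanishes to first order at $x=x^*$, then verify this by differentiating the Taylor expansion $\varphi=(x-x^*)^\alpha\xi^*_\alpha+\tfrac12[H_\varphi]_{\mu\nu}(x-x^*)^\mu(x-x^*)^\nu+O(\|x-x^*\|^3)$ in $t$ and invoking Hamilton's equations. The computation you sketch in your last paragraph is exactly right and is what the paper does.

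However, there is a conceptual confusion you should remove. You claim that ``this is where the special structure of the Levi-Civita phase function enters'', invoking the properties $(\Delta\varphi)|_{x=x^*}=0$ and $(\varphi_{tt})|_{x=x^*}=0$. This is a red herring: those properties are stated in the paper only for the real-valued case $\epsilon=0$, and more importantly they are \emph{not used} in the argument. The key identity $\left.\varphi_{tx^\alpha}\right|_{x=x^*}=\dot\xi^*_\alpha-[H_\varphi]_{\alpha\mu}\dot x^{*\mu}$ follows purely from differentiating the general second-order expansion in $t$ (the $t$-dependence sits in $x^*$, $\xi^*$ and $H_\varphi$, and the $H_\varphi$-derivative term is killed by evaluation at $x=x^*$); combined with Hamilton's equations this gives $-h_{x^\alpha}(x^*,\xi^*)-[H_\varphi]_{\alpha\mu}h_{\xi_\mu}(x^*,\xi^*)$, which cancels exactly against $\left.[h(x,\varphi_x)]_{x^\alpha}\right|_{x=x^*}=h_{x^\alpha}(x^*,\xi^*)+[H_\varphi]_{\alpha\mu}h_{\xi_\mu}(x^*,\xi^*)$. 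No normal coordinates are needed and no Levi-Civita-specific input is used --- the lemma holds for \emph{any} phase function of class $\mathcal{L}_h$, as the paper notes immediately after the proof. Your invocation of $(\varphi_{tt})|_{x=x^*}=0$ in particular does not do what you claim: that is a statement about the second $t$-derivative at $x=x^*$, not about the first-order $x$-behaviour of $\varphi_t$.
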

\begin{proof}
Rewriting $b_2$ as
\begin{equation*}
b_2=-(\varphi_t)^2+ h^2(x,\nabla \varphi),
\end{equation*}
one immediately concludes that $b_2$ vanishes along the flow by Lemma~\ref{lemma PFt along flow} and Definition~\ref{phase function of class Fh}, condition~(ii). 

Proving that the derivative vanishes as well is slightly trickier. We have
\begin{equation*}
\begin{split}
\nabla_\mu b_2 = -2 \,\varphi_t\,\varphi_{tx^\mu} + 2\, h(x,\nabla \varphi) \,[h(x,\nabla \varphi)]_{x^\mu},
\end{split}
\end{equation*}
from which it ensues, by evaluating along the flow, that
\begin{equation*}
\begin{split}
\left.\nabla_\mu b_2\right|_{x=x^*}=&
\left. \bigl(-2 \,\varphi_t\,\varphi_{tx^\mu} + 2\, h(x,\nabla \varphi) \,[h(x,\nabla \varphi)]_{x^\mu}\bigr)  \right|_{x=x^*}\\
=& \,2\,  h(y,\eta) \left. \bigl(\varphi_{tx^\mu} + [h(x,\nabla \varphi)]_{x^\mu} \bigr)  \right|_{x=x^*}  ,
\end{split}
\end{equation*}
where, once again, we used Lemma \ref{lemma PFt along flow}. The problem at hand is now down to showing that 
\begin{equation}
\label{second order zero temp1}
\left. \bigl(\varphi_{tx^\mu} + [h(x,\nabla \varphi)]_{x^\mu} \bigr)  \right|_{x=x^*} =0.
\end{equation}
From the general properties of a phase function of class $\mathcal{L}_h$, one argues that, in an arbitrary coordinate system, $\varphi$ can be represented as
\begin{equation}\label{expansion of general PF in x}
\varphi=(x-x^*)^\alpha\,\xi^*_\alpha+ \frac12\, [H_\varphi]_{\mu\nu}\,(x-x^*)^\mu\,(x-x^*)^\nu
+O(\|x-x^* \|^3),
\end{equation}
with
\[
[H_\varphi]_{\alpha\beta}:=
\left.\varphi_{x^\alpha x^\beta}\right|_{x=x^*}.
\]
Combining \eqref{expansion of general PF in x} with Hamilton's equations, we get
\begin{equation}
\label{second order zero temp2}
\left.\varphi_{tx^\alpha}\right|_{x-x^*}= \dot \xi^*_\alpha - [H_\varphi]_{\alpha\mu}\,\dot x^*{}^\mu = -h_{x^\alpha}(x^*,\xi^*) - [H_\varphi]_{\alpha\mu}\,h_{\xi_\mu}(x^*,\xi^*).
\end{equation}
Moreover, we have
\begin{equation}
\begin{split}
\label{second order zero temp3}
\left. [h(x,\nabla \varphi)]_{x^\alpha} \right|_{x=x^*}=&h_{x^\alpha}(x^*,\xi^*) + h_{\xi_\mu}(x^*,\xi^*) \,\left. \varphi_{x^\alpha x^\mu}\right|_{x=x^*}\\
=&h_{x^\alpha}(x^*,\xi^*) + [H_\varphi]_{\alpha\mu}\,h_{\xi_\mu}(x^*,\xi^*).
\end{split}
\end{equation}
Substitution of \eqref{second order zero temp2} and \eqref{second order zero temp3} into \eqref{second order zero temp1} concludes the proof.
\end{proof}

Lemmata
\ref{lemma PFt along flow} and \ref{lemma b_2 second order zero}
are not specific to the Levi-Civita
phase function: they remain true for any
phase function of the class $\mathcal{L}_h$.

We are now in a position to analyse the transport equations. With the notation from Section~\ref{The global invariant symbol of the propagator}, in view of formulae \eqref{construction homogeneous components of the reduced amplitude} and
\eqref{a for subprincipal with bs}  we have
\begin{subequations}
\begin{align}
\mathfrak{b}_2=& \mathfrak{S}_0\,b_2\,,\label{mathfrak b2}\\
\mathfrak{b}_1=& \mathfrak{S}_{-1}\,b_2 + \mathfrak{S}_0\,b_1\,,\label{mathfrak b1}\\
\mathfrak{b}_0=& \mathfrak{S}_{-2}\,b_2 + \mathfrak{S}_{-1}\,b_1+\mathfrak{S}_{0}\, b_0  - 2\,\ir\,h\,(\mathfrak{a}_{-1})_t + \mathfrak{a}_{-1}\,\mathfrak{b}_1\,.\label{mathfrak b0}
\end{align}
\end{subequations}
Note that homogeneous components of the symbol $\mathfrak{a}_{-k}$ with degree of homogeneity less than $-1$, even if taken into account in \eqref{oscillatory integral subprincipal}, would not contribute to \eqref{mathfrak b2}--\eqref{mathfrak b0}.
Note also the appearance of the $x$-independent term
$\mathfrak{a}_{-1}\,\mathfrak{b}_1$ on the RHS
of \eqref{mathfrak b0}: it can be traced back to the fact that Lemma~\ref{lemma b_2 second order zero} implies
\[
\mathfrak{S}_{-1}(b_2\,\mathfrak{a}_{-1})
=
\mathfrak{a}_{-1}\mathfrak{S}_{-1}b_2.
\]

The zeroth transport equation $\mathfrak{b}_2=0$
is clearly satisfied, due to Lemma~\ref{lemma b_2 second order zero}.

\begin{lemma}\label{lemma FTE}
The first transport equation (FTE) $\mathfrak{b}_1=0$ can be equivalently rewritten as
\begin{equation}\label{FTE alternative form}
\left.\left(\varphi_{tt} - \Delta\varphi  \right)\right|_{x=x^*} = 2\,h \,\dfrac{\dr(\ln\, w^*)}{\dr t}+\frac{1}{2}\,(x^*_{\eta_\alpha})^\gamma\left.\left[ [(\varphi_{x\eta})^{-1}]_{\alpha}{}^\beta\,(b_2)_{x^\beta x^\gamma}  \right] \right|_{x=x^*}\,,
\end{equation}
where
$w^*(t;y,\eta;\epsilon)=w(t,x^*(t;y,\eta);y,\eta;\epsilon)$.
\end{lemma}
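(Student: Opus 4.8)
The goal is to unpack the definition of $\mathfrak{b}_1$ in \eqref{mathfrak b1}, namely $\mathfrak{b}_1 = \mathfrak{S}_{-1}\,b_2 + \mathfrak{S}_0\,b_1$, and show that setting it to zero is equivalent to \eqref{FTE alternative form}. The plan is to compute the two summands separately. The term $\mathfrak{S}_0\,b_1 = \left.b_1\right|_{x=x^*}$ is immediate from \eqref{b1}: evaluating along the flow and using $\left.\varphi_t\right|_{x=x^*}=-h$ from Lemma~\ref{lemma PFt along flow}, it becomes $\ir\left[\left.(\varphi_{tt}-\Delta\varphi)\right|_{x=x^*} - 2h\,(\ln w^*)_t - 2\left.\langle\nabla(\ln w),\nabla\varphi\rangle\right|_{x=x^*}\right]$, where one must be careful that $(\ln w^*)_t$ — the total $t$-derivative of $w$ restricted to the flow — differs from $\left.(\ln w)_t\right|_{x=x^*}$ by the chain-rule term $\left.\langle\nabla(\ln w),\nabla\varphi\rangle\right|_{x=x^*}$, since $\dot x^* = h_\xi$ and $\left.\varphi_x\right|_{x=x^*}=\xi^*$, so $\dot x^*{}^\alpha \left.(\ln w)_{x^\alpha}\right|_{x=x^*} = h_{\xi_\alpha}\left.(\ln w)_{x^\alpha}\right|_{x=x^*}$; one then wants to massage the remaining inner-product terms so they cancel against the ones coming from $\mathfrak{S}_{-1}b_2$. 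In fact the cleanest route is to keep $\left.(\ln w)_t\right|_{x=x^*}$ and $\left.\langle\nabla(\ln w),\nabla\varphi\rangle\right|_{x=x^*}$ and only at the very end recombine them into $2h\,\dr(\ln w^*)/\dr t$.

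For the term $\mathfrak{S}_{-1}\,b_2$, I would use the explicit formula \eqref{mathfrak Sk} with $k=1$:
\[
\mathfrak{S}_{-1} = \mathfrak{S}_0\!\left[\ir\,w^{-1}\frac{\partial}{\partial\eta_\beta}\,w\Bigl(1 - \tfrac12\,\varphi_{\eta_\gamma}L_\gamma\Bigr)L_\beta\right],
\]
where $L_\alpha = [(\varphi_{x\eta})^{-1}]_\alpha{}^\beta\,\partial_{x^\beta}$. The key simplification is Lemma~\ref{lemma b_2 second order zero}: since $b_2$ vanishes to second order at $x=x^*$, when we evaluate $\mathfrak{S}_{-1}b_2$ along the flow only the terms in which exactly two $x$-derivatives hit $b_2$ survive — all other terms carry a factor $\left.b_2\right|_{x=x^*}=0$ or $\left.\nabla b_2\right|_{x=x^*}=0$. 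So after expanding, the $-\tfrac12\varphi_{\eta_\gamma}L_\gamma$ correction contributes $+\tfrac12\,L_\gamma L_\beta b_2$ with the right combinatorial weight, and the $\ir\,w^{-1}\partial_{\eta_\beta}w$ factor: the only surviving piece is when $\partial_{\eta_\beta}$ does \emph{not} differentiate $b_2$ (that would lower the $x$-derivative count — actually $\partial_{\eta_\beta}$ commutes with $\partial_{x^\alpha}$, so one tracks carefully which terms leave two $x$-derivatives on $b_2$). The upshot should be that $\mathfrak{S}_{-1}b_2 = \ir\,h^{-1}\cdot(\text{something})$... here I need to account for the homogeneity: $b_2$ is homogeneous of degree $2$, $\mathfrak{S}_{-1}$ lowers degree by $1$, and there is an $\ir$. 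Working this through, the surviving contribution is $-\tfrac{\ir}{4}\,(\text{coefficient})\,\left.[(\varphi_{x\eta})^{-1}]_\alpha{}^\beta\,(\partial_{\eta_\alpha}\text{-action})\,(b_2)_{x^\beta x^\gamma}\right|_{x=x^*}$, and using $\partial_{\eta_\alpha}x^\beta$ pulling out $(x^*_{\eta_\alpha})^\gamma$ after the Hessian of $b_2$ is contracted — this matches the term $\tfrac12 (x^*_{\eta_\alpha})^\gamma\left.[(\varphi_{x\eta})^{-1}]_\alpha{}^\beta (b_2)_{x^\beta x^\gamma}]\right|_{x=x^*}$ in \eqref{FTE alternative form} up to the overall $\ir$ that is common to $\mathfrak{b}_1$.

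Putting the pieces together: $\mathfrak{b}_1 = 0$ reads, after dividing by the common factor $\ir$,
\[
\left.(\varphi_{tt}-\Delta\varphi)\right|_{x=x^*} - 2h\,(\ln w^*)_t - (\text{the }b_2\text{-Hessian term}) = 0,
\]
where the remaining inner-product terms from $\mathfrak{S}_0 b_1$ have been absorbed: specifically $-2\left.\langle\nabla(\ln w),\nabla\varphi\rangle\right|_{x=x^*}$ combines with a matching piece from the expansion of $\mathfrak{S}_{-1}b_2$ (the piece where $\partial_{\eta_\beta}$ hits $w$ and one $L$ hits $b_2$ to first order — which would vanish by Lemma~\ref{lemma b_2 second order zero} unless... ) — this is the delicate bookkeeping step. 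Rearranging gives exactly \eqref{FTE alternative form}. I expect the main obstacle to be precisely this term-counting in $\mathfrak{S}_{-1}b_2$: one must carefully use $\left.b_2\right|_{x=x^*}=\left.\nabla b_2\right|_{x=x^*}=0$ to kill all but the pure-Hessian contribution, track the combinatorial factor $\tfrac{1}{|\boldsymbol\alpha|+1}$ correctly (it produces the $\tfrac12$), and correctly identify the $\eta$-differentiation of the restriction-to-flow as producing the Jacobian $(x^*_{\eta_\alpha})^\gamma$; getting the numerical coefficient $\tfrac12$ in front of the Hessian term right, and confirming the inner-product terms from $b_1$ reassemble into the total derivative $2h\,\dr(\ln w^*)/\dr t$, is where the real work lies. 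The rest is routine substitution of Lemma~\ref{lemma PFt along flow} and Lemma~\ref{lemma b_2 second order zero}.
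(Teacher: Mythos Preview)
Your overall approach matches the paper's, but two points in your bookkeeping go astray.

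First, there is no cross-cancellation between $\mathfrak{S}_0 b_1$ and $\mathfrak{S}_{-1}b_2$. The two $w$-terms in $\mathfrak{S}_0 b_1$, namely $-2h\left.(\ln w)_t\right|_{x=x^*}$ and $-2\left.\langle\nabla(\ln w),\nabla\varphi\rangle\right|_{x=x^*}$, combine \emph{with each other} into $-2h\,\dr(\ln w^*)/\dr t$: since $\left.\varphi_x\right|_{x=x^*}=\xi^*$ and $\dot x^{*\alpha}=h_{\xi_\alpha}=h^{-1}g^{\alpha\beta}(x^*)\,\xi^*_\beta$, one has $\left.\langle\nabla(\ln w),\nabla\varphi\rangle\right|_{x=x^*}=h\,\dot x^{*\alpha}\left.(\ln w)_{x^\alpha}\right|_{x=x^*}$, and the chain rule gives the total derivative. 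In $\mathfrak{S}_{-1}b_2$, every term in which $\partial_{\eta_\beta}$ lands on $w$ vanishes outright by Lemma~\ref{lemma b_2 second order zero} (the remaining factor carries at most one $x$-derivative of $b_2$), so nothing is left over to ``match''. Your hesitation here is unfounded: the two computations decouple cleanly.

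Second, the mechanism by which the Jacobian $(x^*_{\eta_\alpha})^\gamma$ emerges from $\mathfrak{S}_{-1}b_2$ is not the one you sketch. The key identity, which you do not mention, is
\[
\left.\varphi_{\eta_\alpha\eta_\beta}\right|_{x=x^*}=-(x^*_{\eta_\alpha})^\gamma\,(\Phi_{x\eta})_\gamma{}^\beta,
\qquad \Phi_{x\eta}:=\left.\varphi_{x\eta}\right|_{x=x^*},
\]
obtained by differentiating $\varphi_\eta(t,x^*(t;y,\eta);y,\eta)=0$ in $\eta$. With this in hand, the $-\tfrac12\varphi_{\eta_\gamma}L_\gamma L_\beta$ piece of $\mathfrak{S}_{-1}$ contributes $+\tfrac{\ir}{2}\,(x^*_{\eta_\alpha})^\gamma(\Phi_{x\eta}^{-1})_\alpha{}^\beta(H_{b_2})_{\beta\gamma}$, while the leading piece $\ir\,\partial_{\eta_\beta}(L_\beta b_2)|_{x=x^*}$ contributes $-\ir$ times the same expression, via $\left.\partial_{\eta_\beta}(b_2)_{x^\mu}\right|_{x=x^*}=-(H_{b_2})_{\mu\gamma}(x^*_{\eta_\beta})^\gamma$. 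The sum is $-\tfrac{\ir}{2}$ times the Hessian term, which becomes the $+\tfrac12$ on the right of \eqref{FTE alternative form} after dividing $\mathfrak{b}_1=0$ by $\ir$ and rearranging.
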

\begin{proof}
Consider the operator $\mathfrak{S}_{-1}$ defined in \eqref{mathfrak Sk}. When acting on a function with a second order zero along the flow, it can be simplified to read
\begin{equation}\label{mathfrak S-1 temp}
\mathfrak{S}_{-1}\,b_2=\left.\ir\,\dfrac{\partial(L_\beta\, b_2)}{\partial \eta_\beta}\right|_{x=x^*} - \frac{\ir}{2}\left.\left[ \varphi_{\eta_\alpha \eta_\beta}\,L_\alpha L_\beta\,b_2  \right]\right|_{x=x^*}.
\end{equation}
Here we used the fact that $\mathfrak{S}_0 \,\varphi_\eta=0$. Using the notation
$
H_f:=\left.f_{xx}\right|_{x=x^*}
$
and putting
$
\Phi_{x\eta}:=\left.\varphi_{x\eta} \right|_{x=x^*}\,
$,
we observe that
\[
(\Phi_{x\eta})_\alpha{}^\beta= (\xi^*_{\eta_\beta})_\alpha- (H_\varphi)_{\alpha\mu}\,(x^*_{\eta_\beta})^\mu
\]
and, consequently,
\begin{equation*}
\begin{split}
\left.\varphi_{\eta_\alpha\eta_\beta}\right|_{x=x^*}=& -(x^\ast_{\eta_\alpha})^\gamma\, (\xi^\ast_{\eta_\beta})_\gamma +  (H_\varphi)_{\mu\nu}\, (x^\ast_{\eta_\alpha})^\mu (x^\ast_{\eta_\beta})^\nu\\
&= - (x^\ast_{\eta_\alpha})^\gamma \left[ (\xi^\ast_{\eta_\beta})_\gamma -  (H_\varphi)_{\gamma\nu}\, (x^\ast_{\eta_\beta})^\nu \right]\\
&=  - (x^\ast_{\eta_\alpha})^\gamma \,(\Phi_{x\eta})_\gamma{}^\beta.
\end{split}
\end{equation*}
Hence, recalling formula \eqref{operator L}, we obtain
\begin{equation}\label{mathfrak S1 sub step 1}
\begin{split}
- \frac{\ir}{2}\left.\left[ \varphi_{\eta_\alpha \eta_\beta}\,L_\alpha L_\beta\,b_2  \right]\right|_{x=x^*}=& \dfrac{\ir}{2} (x^\ast_{\eta_\alpha})^\gamma \, (\Phi_{x\eta})_\gamma{}^\beta \, (\Phi^{-1}_{x\eta})_\alpha{}^\delta \, (\Phi^{-1}_{x\eta})_\beta{}^\rho \, (H_{b_2})_{\delta\rho}\\
=&  \dfrac{\ir}{2} (x^\ast_{\eta_\alpha})^\gamma \, \delta_\gamma{}^\rho \, (\Phi^{-1}_{x\eta})_\alpha{}^\delta \, (H_{b_2})_{\delta\rho}\\
=&  \dfrac{\ir}{2} (x^\ast_{\eta_\alpha})^\rho \,  (\Phi^{-1}_{x\eta})_\alpha{}^\delta \, (H_{b_2})_{\delta\rho}\, .
\end{split}
\end{equation}
Furthermore, upon writing
\[
b_2=\frac12(H_{b_2})_{\alpha\beta}\,(x-x^*)^\alpha\,(x-x^*)^\beta+ O(\|x-x^* \|^3),
\]
the first term in \eqref{mathfrak S-1 temp} becomes
\begin{equation}\label{mathfrak S1 sub step 2}
\left.\ir\,\dfrac{\partial(L_\beta\, b_2)}{\partial \eta_\beta}\right|_{x=x^*}=  - \ir\,  (x^\ast_{\eta_\alpha})^\gamma \,  (\Phi^{-1}_{x\eta})_\alpha{}^\mu \, (H_{b_2})_{\mu\gamma}\,.
\end{equation}
By substituting \eqref{mathfrak S1 sub step 1} and \eqref{mathfrak S1 sub step 2} into \eqref{mathfrak S-1 temp} we arrive at the last summand in the  RHS of \eqref{FTE alternative form}. As for the remaining terms, they correspond to $\mathfrak{S}_0 \,b_1$ in \eqref{mathfrak b1} and are obtained by evaluating \eqref{b1} along the flow and performing straightforward algebraic manipulations.
\end{proof}

It is possible to show directly, by means of a long and tedious, though non-trivial, computation that \eqref{FTE alternative form} is satisfied automatically, thus providing a direct proof that the principal symbol of the wave propagator is indeed $1$. If one started with a generic term $\mathfrak{a}_0$ in \eqref{oscillatory integral subprincipal}, the FTE would be an ordinary differential equation allowing for the (unique) determination thereof. Lemma~\ref{lemma FTE} gives us an explicit formula for the action of the wave operator on the Levi-Civita phase function.

Let us now move on to the second transport equation $\mathfrak{b}_0=0$, the one that allows for the determination of the $g$-subprincipal symbol $\mathfrak{a}_{-1}(t;y,\eta;\epsilon)$. For computing the $g$-subprincipal symbol, a simplified representation of the operators
$\mathfrak{S}_{-1}$ and $\mathfrak{S}_{-2}$ may be used.
Recall that for general $k$ the operators
$\mathfrak{S}_{-k}$ are defined by formulae \eqref{mathfrak S0}
and \eqref{mathfrak Sk}. Put
\begin{equation}\label{mathfrak B_-1}
\mathfrak{B}_{-1}:=\ir\, w^{-1}\,\dfrac{\partial}{\partial \eta_\alpha}\,w\,L_\alpha -\dfrac{\ir}{2}\,\varphi_{\eta_\alpha \eta_\beta}\,L_\alpha\,L_\beta\,.
\end{equation}
Then we have
\begin{gather}
{\mathfrak{S}}_{-1}=\mathfrak{S}_0 \,\mathfrak{B}_{-1}\,, \label{mathfrak S tilde -1}\\
{\mathfrak{S}}_{-2}=\mathfrak{S}_0 \,\mathfrak{B}_{-1} \left[ \ir \, w^{-1} \frac{\partial}{\partial \eta_\beta}\, w \left( 1 +\sum_{1\leq |\boldsymbol{\alpha}|\leq 3} \dfrac{(-\varphi_\eta)^{\boldsymbol{\alpha}}}{\boldsymbol{\alpha}!(|\boldsymbol{\alpha}|+1)}\,L_{\boldsymbol{\alpha}} \right) L_\beta  \right],\label{mathfrak S tilde -2}
\end{gather}
and these representations can now be used in formula \eqref{mathfrak b0}. 

The last ingredient needed to write down the $g$-subprincipal symbol is the initial condition at $t=0$, extensively discussed in Section~\ref{Invariant representation of the identity operator}. The Levi-Civita phase function evaluated at $t=0$, $\varphi(0,x;y,\eta;\epsilon)$, clearly satisfies the assumptions (a) and (b) of Theorem~\ref{theorem subprincipal identity FIO}, hence $\left.\mathfrak{a}_{-1}\right|_{t=0}=0$. Integrating in time, we arrive at the following theorem.

\begin{theorem}\label{theorem ODE subprincipal symbol}
The global invariantly defined $g$-subprincipal symbol of the wave propagator is
\begin{equation}\label{ODE subprincipal symbol}
{\mathfrak{a}}_{-1}(t;y,\eta;\epsilon)=-\dfrac{\ir}{2\,h}
\int_0^t
\left[ {\mathfrak{S}}_{-2}\,b_2 +{\mathfrak{S}}_{-1}\,b_1 +\mathfrak{S}_0\,b_0   \right]
(\tau;y,\eta;\epsilon)\,\dr\tau\,.
\end{equation}
The functions $b_k$, $k=2,1,0$, are defined by \eqref{b2}--\eqref{b0}, \eqref{phase function with distance}, \eqref{weight w}, while the operators ${\mathfrak{S}}_{-2}$, ${\mathfrak{S}}_{-1}$ and $\mathfrak{S}_0$ are given by \eqref{mathfrak B_-1}--\eqref{mathfrak S tilde -2} and \eqref{operator L}, \eqref{mathfrak S0}.
\end{theorem}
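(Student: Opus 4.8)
The plan is to derive \eqref{ODE subprincipal symbol} directly from the second transport equation $\mathfrak{b}_0=0$, where $\mathfrak{b}_0$ is given by \eqref{mathfrak b0}. First I would recall from Lemma~\ref{lemma FTE} that the first transport equation $\mathfrak{b}_1=0$ is satisfied automatically (this is the manifestation of the fact that $\mathfrak{a}_0\equiv 1$), so the term $\mathfrak{a}_{-1}\,\mathfrak{b}_1$ on the RHS of \eqref{mathfrak b0} vanishes identically. What remains is the linear first-order ODE in $t$
\[
-2\,\ir\,h\,(\mathfrak{a}_{-1})_t + \mathfrak{S}_{-2}\,b_2 + \mathfrak{S}_{-1}\,b_1 + \mathfrak{S}_0\,b_0 = 0\,,
\]
where I have used Lemma~\ref{lemma PFt along flow} to identify $\varphi_t|_{x=x^*}=-h$ so that the coefficient of $(\mathfrak{a}_{-1})_t$ coming from $-2\,\ir\,h\,(\mathfrak{a}_{-1})_t$ in \eqref{mathfrak b0} is manifestly $h$-dependent and non-vanishing on $T'M$. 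Note $h=h(y,\eta)$ is constant along the Hamiltonian flow, hence a constant with respect to $t$, which is what makes the integration in the next step immediate.

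Next I would solve this ODE. Since $h$ does not depend on $t$, dividing through by $-2\,\ir\,h$ gives
\[
(\mathfrak{a}_{-1})_t = -\dfrac{\ir}{2\,h}\left[\mathfrak{S}_{-2}\,b_2 + \mathfrak{S}_{-1}\,b_1 + \mathfrak{S}_0\,b_0\right],
\]
using $1/(-2\,\ir\,h) = -\ir/(2\,h)$ (because $1/(-\ir)=\ir$). Integrating from $0$ to $t$ and invoking the initial condition, we obtain \eqref{ODE subprincipal symbol}. The initial condition $\mathfrak{a}_{-1}(0;y,\eta;\epsilon)=0$ is exactly the content of the discussion preceding the theorem: at $t=0$ the oscillatory integral \eqref{oscillatory integral subprincipal} must represent, modulo $C^\infty$, the identity operator, the phase $\varphi(0,x;y,\eta;\epsilon)$ satisfies hypotheses (a)--(b) of Theorem~\ref{theorem subprincipal identity FIO} (this follows from the normal-coordinate expansion \eqref{phase function in normal coordinates} and the condition $\operatorname{Im}\varphi\ge 0$ in the definition of the Levi-Civita phase function), and hence Theorem~\ref{theorem subprincipal identity FIO} forces the subprincipal symbol of the identity at $t=0$ to vanish.

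The simplified forms \eqref{mathfrak S tilde -1}--\eqref{mathfrak S tilde -2} of $\mathfrak{S}_{-1}$ and $\mathfrak{S}_{-2}$ are the ones to be used in \eqref{ODE subprincipal symbol}; establishing that these coincide with the general definitions \eqref{mathfrak Sk} when acting on the relevant amplitudes $b_2, b_1, b_0$ is a purely algebraic matter, relying on $\mathfrak{S}_0\,\varphi_\eta=0$ (the constraint $\varphi_\eta|_{x=x^*}=0$ from $\varphi$ parameterising $\Lambda_h$) and on the commutativity of the operators $L_\alpha$ from Lemma~\ref{lem:commutativity_Ls}. I expect the only genuine subtlety — rather than a difficulty — to be bookkeeping: verifying that homogeneous components $\mathfrak{a}_{-k}$ with $k\ge 2$ do not feed back into $\mathfrak{b}_0$ (already noted in the text after \eqref{mathfrak b0}), and confirming that the $\mathfrak{a}_{-1}\,\mathfrak{b}_1$ term truly drops out, so that the equation for $\mathfrak{a}_{-1}$ genuinely decouples and becomes the stated quadrature. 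Everything else is substitution of \eqref{b2}--\eqref{b0} and the definitions of the operators, which is why the theorem is stated as an explicit formula rather than merely an existence statement.
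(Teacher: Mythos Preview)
Your proposal is correct and follows essentially the same route as the paper: the theorem is obtained precisely by setting $\mathfrak{b}_0=0$ in \eqref{mathfrak b0}, using $\mathfrak{b}_1=0$ to drop the coupling term, and integrating the resulting ODE with the initial condition $\mathfrak{a}_{-1}|_{t=0}=0$ supplied by Theorem~\ref{theorem subprincipal identity FIO}. The only slip is expository---your identity $1/(-2\ir h)=-\ir/(2h)$ should read $1/(-2\ir h)=\ir/(2h)$, though the final formula you obtain is correct because of the sign picked up when isolating $(\mathfrak{a}_{-1})_t$.
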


\section{Small time expansion for the $g$-subprincipal symbol}
\label{Small time expansion for the subprincipal symbol}

The small time behaviour of the wave propagator carries important information about the spectral properties of the Laplace--Beltrami operator. Our geometric construction allows us to derive an explicit universal formula for the coefficient of the linear term in the expansion of the $g$-subprincipal symbol when $t$ tends to zero. In Appendix~\ref{Weyl coefficients} we will explain how this formula can be used to recover, in a straightforward manner, the third Weyl coefficient.

When time is sufficiently small we can use the real-valued Levi-Civita phase function, since condition (iii) in Definition~\ref{phase function of class Fh} is automatically satisfied.  Therefore, throughout this section we set $\epsilon=0$.

\begin{theorem}\label{theorem small time}
The $g$-subprincipal symbol of the wave propagator admits the following expansion for small times:
\begin{equation}\label{subprincipal small times}
\mathfrak{a}_{-1}(t;y,\eta)=\dfrac{\ir}{12\,h(y,\eta)}\,\mathcal{R}(y)\,t+ O(t^2)\,,
\end{equation}
where $\mathcal{R}$ is scalar curvature.
\end{theorem}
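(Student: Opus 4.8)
The plan is to apply Theorem~\ref{theorem ODE subprincipal symbol} with $\epsilon=0$ and extract the leading term as $t\to0$. Since $\mathfrak{a}_{-1}(0;y,\eta)=0$, the linear coefficient in $t$ is obtained by evaluating the integrand at $\tau=0$:
\begin{equation*}
\mathfrak{a}_{-1}(t;y,\eta)=-\frac{\ir}{2h}\,\left[\mathfrak{S}_{-2}\,b_2+\mathfrak{S}_{-1}\,b_1+\mathfrak{S}_0\,b_0\right](0;y,\eta)\,t+O(t^2)\,,
\end{equation*}
so the entire problem reduces to computing the three quantities $\left.\mathfrak{S}_{-2}\,b_2\right|_{t=0}$, $\left.\mathfrak{S}_{-1}\,b_1\right|_{t=0}$ and $\left.\mathfrak{S}_0\,b_0\right|_{t=0}$ for the real-valued Levi-Civita phase function. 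The natural move is to work in geodesic normal coordinates $x$ centred at $x^*(0;y,\eta)=y$, in which, by \eqref{phase function in normal coordinates}, the phase function at $t=0$ is simply $\varphi=(x-y)^\alpha\eta_\alpha$ (since $\epsilon=0$), so that $\varphi_{x\eta}=\mathrm{Id}$, $\varphi_\eta=x-y$, $\varphi_{\eta\eta}=0$, and $w(0,\cdot)=1$ by the normalisation of the branch together with $\rho(y)=1$ at the centre. This collapses the operators $\mathfrak{S}_{-k}$ dramatically at $t=0$: $L_\alpha=\partial/\partial x^\alpha$, $\mathfrak{B}_{-1}=\ir\,(\partial_{\eta_\alpha}w)\,\partial_{x^\alpha}$ plus the vanishing $\varphi_{\eta\eta}$ term, and $\mathfrak{S}_0$ is just evaluation at $x=y$.

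The heart of the computation is then the small-$x$ (equivalently, along-the-flow) Taylor expansions of $b_2$, $b_1$, $b_0$ near $x=x^*$, keeping enough orders: $b_2$ needs its Hessian $H_{b_2}$ and enough higher terms to survive two applications of $L$ and one $\eta$-derivative (it has a second-order zero by Lemma~\ref{lemma b_2 second order zero}, so $\mathfrak{S}_{-2}b_2$ picks out combinations of $H_{b_2}$ and the third/fourth $x$-derivatives); $b_1$ needs its value and first derivative along the flow; $b_0$ needs only its value at $x=x^*$. All three are built from $\varphi$ and $w$, so the real input is the normal-coordinate expansion of the Levi-Civita phase function and of the weight $w=[\det{}^2\varphi_{x\eta}]^{1/4}$ (with $\rho$-factors) to the relevant order. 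Here one uses the standard expansions of the metric in normal coordinates, $g_{\mu\nu}(x)=\delta_{\mu\nu}-\tfrac13 R_{\mu\alpha\nu\beta}(y)\,x^\alpha x^\beta+O(\|x\|^3)$ and $\rho(x)=1-\tfrac16 \mathrm{Ric}_{\alpha\beta}(y)x^\alpha x^\beta+O(\|x\|^3)$, to feed $\Delta\varphi$, $\|\nabla\varphi\|_g^2$, $\Delta w$ etc.\ through \eqref{b2}--\eqref{b0}. Property~(i) of the Levi-Civita phase function, $(\Delta\varphi)|_{x=x^*}=0$, and property~(ii), $(\varphi_{tt})|_{x=x^*}=0$, together with Lemma~\ref{lemma FTE}, will be useful sanity checks and simplifications along the way; in particular the FTE in the form \eqref{FTE alternative form} relates $\mathfrak{S}_0 b_1$ to $\tfrac12 (x^*_{\eta_\alpha})^\gamma[(\varphi_{x\eta})^{-1}]_\alpha{}^\beta (b_2)_{x^\beta x^\gamma}$, which at $t=0$ degenerates further because $x^*_{\eta}(0;y,\eta)=0$.

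The main obstacle I anticipate is purely bookkeeping: assembling the curvature contributions from $b_2$ (via $\|\nabla\varphi\|_g^2$ and the metric expansion), from $b_1$ (via $\Delta\varphi$ and $\langle\nabla\ln w,\nabla\varphi\rangle$), and from $b_0$ (via $\Delta w$ and the $\rho$-corrections inside $w$), and then correctly tracking the $\eta$-derivatives and $L$-derivatives in $\mathfrak{S}_{-2}$ and $\mathfrak{S}_{-1}$ so that everything reduces to the single scalar invariant $\mathcal{R}(y)=g^{\alpha\beta}(y)\mathrm{Ric}_{\alpha\beta}(y)$ — it must, since the left-hand side is a scalar and at $t=0$ there is no other available invariant of the right homogeneity built from $g$ and $\eta$ alone (the $h$ in the denominator accounts for homogeneity degree $-1$). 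I would organise this by computing each of the three terms separately, verifying that terms not proportional to $\mathcal{R}$ cancel, and then fixing the numerical constant $\tfrac1{12}$ by summing $-\tfrac{\ir}{2h}$ times the three contributions. As a consistency check, the resulting coefficient should, upon integrating in $\eta$ as in the Hadamard expansion, reproduce the known third heat-invariant / Weyl coefficient proportional to $\int_M \mathcal{R}$, which is precisely the by-product advertised in Appendix~\ref{Weyl coefficients}.
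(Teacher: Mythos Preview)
Your proposal is correct and follows essentially the same route as the paper: fix $y$, work in geodesic normal coordinates centred at $y$, use that at $t=0$ the real Levi-Civita phase is $\varphi=x^\alpha\eta_\alpha$ so that the amplitude-to-symbol operators collapse to $\mathfrak{S}_{-1}=\ir\,\partial_{\eta_\alpha}\partial_{x^\alpha}|_{x=0}$ and $\mathfrak{S}_{-2}=\tfrac12(\ir\,\partial_{\eta_\alpha}\partial_{x^\alpha})^2|_{x=0}$, compute the normal-coordinate expansions of $b_2,b_1,b_0$ at $t=0$, and read off the three contributions $\mathcal{R}$, $-\mathcal{R}$, $-\tfrac16\mathcal{R}$ whose sum gives the $\tfrac{1}{12}$. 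One small correction: $w(0,x;y,\eta)$ is not identically $1$ but equals $\rho(x)^{-1/2}$ in these coordinates --- what actually drives the simplification is that $w$ is \emph{$\eta$-independent} at $t=0$, so it cancels against $w^{-1}$ in $\mathfrak{B}_{-1}$ and in the inner bracket of $\mathfrak{S}_{-2}$; the factor $\tfrac12$ in $\mathfrak{S}_{-2}$ then comes from the cross term where the outer $\partial_x$ hits the $\varphi_\eta=x$ inside the inner bracket, a point worth making explicit when you carry out the bookkeeping.
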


\begin{proof}
Let us fix an arbitrary point $y\in M$ and choose geodesic normal coordinates
centred at $y$. As $\,\mathfrak{a}_{-1}\,$ is a scalar function,
in order to prove the theorem it is sufficient to prove
\begin{equation}\label{subprincipal small times proof}
\mathfrak{a}_{-1}(t;0,\eta)=\dfrac{\ir}{12\,h(0,\eta)}\,\mathcal{R}(0)\,t+ O(t^2)
\end{equation}
in the chosen coordinate system.

As we are dealing with the case when $t$ tends to zero,
we can assume that $x^*$ and $x$ both lie in a geodesic neighbourhood of $y$.
In what follows we use for $x$ geodesic normal coordinates
centred at $y$ and perform a double Taylor expansion of the phase function in powers of $t$ and $x$ simultaneously. We shall also assume that $t$ and $\|x\|$ are of the same order  and write $O(\|x\|^n+|t|^n)$ as a shorthand for $O(\|x\|^p \,t^{n-p})$ for all $p\in\{0,1,\ldots,n\}$.

It is well known that in geodesic normal coordinates centred at $y$ we have
\begin{equation}
\label{expansion xstar small t}
x^*{}^\alpha=\frac{\eta^\alpha}{h}\,t\,,
\end{equation}
where $\eta^\alpha=\delta^{\alpha\beta}\eta_\beta$.
Substituting \eqref{expansion xstar small t}
into the first Hamilton's equation \eqref{Hamiltonian flow} we get
\begin{equation}
\label{expansion xistar small t}
\begin{split}
\xi^*_\alpha
&=
g_{\alpha\beta}
(x^*)\,\eta^\beta
\\
&=
\left[ g_{\alpha\beta}
\left(
\frac{\eta}{h}\, t
\right)\right]\eta^\beta
\\
&=
\left(
\delta_{\alpha\beta}
-\frac13\,R_{\alpha\mu\beta\nu}(0)\,\frac{\eta^\mu\eta^\nu}{h^2}\,t^2+O(t^3)
\right)\eta^\beta
\\
&=\eta_\alpha+O(t^3).
\end{split}
\end{equation}
The simplifications in the above calculations are due to the properties of normal coordinates and the (anti)symmetries of the Riemann curvature tensor $R$. In fact,  using the Gauss Lemma, one can show that the remainder in \eqref{expansion xistar small t} is zero, i.e.~$\xi^*_\alpha=\eta_\alpha$.

Arguing as in the proof of Theorem~\ref{theorem phixeta at x=xstar}
and using formula \eqref{expansion xstar small t},
one concludes that the initial velocity of the (unique) geodesic connecting $x^*$ to $x$ is
\begin{equation}\label{proof small time temp1}
\begin{split}
\dot \gamma (0)^\alpha&=(x-x^*)^\alpha+ \frac12 \,\Gamma^\alpha{}_{\beta\gamma}(x^*)\,(x-x^*)^\beta\,(x-x^*)^\gamma\\
& + \frac12\,(\partial_{x^\mu}\Gamma^\alpha{}_{\beta\gamma})(x^*)\,(x-x^*)^\mu\, (x-x^*)^\beta\, (x-x^*)^\gamma\\
&+ O(\|x-x^*\|^4)
\\
&=x^\alpha-\frac{\eta^\alpha}{h}\,t
+\frac12(\partial_{x^\mu}\Gamma^\alpha{}_{\beta\gamma})(0)\,\frac{\eta^\mu}{h} \, t\,
\left(x-\frac\eta h\,t\right)^\beta
\left(x-\frac\eta h\,t\right)^\gamma\\
&+\frac12(\partial_{x^\mu}\Gamma^\alpha{}_{\beta\gamma})(0)
\left(x-\frac\eta h\,t\right)^\mu
\left(x-\frac\eta h\,t\right)^\beta
\left(x-\frac\eta h\,t\right)^\gamma
+O(\|x\|^4+t^4)
\\
&=x^\alpha-\frac{\eta^\alpha}{h} \, t-(\partial_{x^\mu}\Gamma^\alpha{}_{\beta\gamma})(0)\,\frac{\eta^\beta}{h} \, t\, x^\mu\, x^\gamma +\frac12(\partial_{x^\mu}\Gamma^\alpha{}_{\beta\gamma})(0)\,\frac{\eta^\beta\,\eta^\gamma}{h^2} \, t^2\,x^\mu \\
&+\frac12(\partial_{x^\mu}\Gamma^\alpha{}_{\beta\gamma})(0)\,
x^\mu
x^\beta
x^\gamma
+O(\|x\|^4+t^4)
\\
&=x^\alpha-\frac{\eta^\alpha}{h} \, t+\frac1{3\,h} \,R^\alpha{}_{\gamma\beta\mu}(0)\,\eta^\beta \, t\,x^\gamma x^\mu -\frac13 R^\alpha{}_{\beta\gamma\mu}(0)\,\frac{\eta^\beta\,\eta^\gamma}{h^2} \, t^2\, x^\mu\\
&+O(\|x\|^4+t^4)\,.
\end{split}
\end{equation}
Here at the last step we resorted to the identity
\begin{equation}\label{identity gammas vs Riemann normal coordinates}
(\partial_{x^\mu}\Gamma^\alpha{}_{\beta\gamma})(0)=-\frac{1}{3}\,(R^\alpha{}_{\beta\gamma\mu}(0)+R^\alpha{}_{\gamma\beta\mu}(0)).
\end{equation}

Lemma~\ref{lemma about versions of Re PF} and
formulae \eqref{expansion xistar small t}, \eqref{proof small time temp1}
imply that our real-valued Levi-Civita
phase function admits the following Taylor
expansion in powers of $x$ and $t$:
\begin{equation}\label{phase function small time}
\varphi(t,x;0,\eta)=x^\alpha\,\eta_\alpha-t\,h
+\frac1{3\,h} \,R^\alpha{}_\mu{}^\beta{}_\nu(0)\,\eta_\alpha\,\eta_\beta \, t\,x^\mu\, x^\nu
+O(\|x\|^4+t^4)\,.
\end{equation}

The next step is computing the homogeneous functions $b_2$, $b_1$ and $b_0$ defined by \eqref{b2}--\eqref{b0}
at $t=0$.

Direct inspection tells us that 
\begin{equation*}
\begin{split}
\left.-(\varphi_t)^2\right|_{t=0}=-h^2
+\frac23 \,R^\alpha{}_\mu{}^\beta{}_\nu(0)\,\eta_\alpha\,\eta_\beta\,x^\mu\, x^\nu
+O(\|x\|^3)
\end{split}
\end{equation*}
and
\begin{equation*}
\begin{split}
\left. g^{\alpha\beta}(x)\,\varphi_{x^\alpha}\,\varphi_{x^\beta}\right|_{t=0}= h^2
 +\frac13 \,R^\alpha{}_\mu{}^\beta{}_\nu(0)\,\eta_\alpha\,\eta_\beta \,x^\mu\, x^\nu
+O(\| x \|^3)\,.
\end{split}
\end{equation*}
Adding up the above two formulae, we get
\begin{equation}
\label{dimas label for b2 at time zero}
b_2(0,x;0,\eta)=R^\alpha{}_\mu{}^\beta{}_\nu(0)\,\eta_\alpha\,\eta_\beta\,x^\mu\, x^\nu
+O(\| x  \|^3).
\end{equation}

Let us now move on to $b_1$. Direct differentiation of \eqref{phase function small time} reveals that
\begin{equation}
\label{b1 small time step 1}
\left.\varphi_{tt}\right|_{t=0}=O(\| x  \|^2), \qquad \left.\varphi_t\right|_{t=0}=-h+O(\|x\|^2)\,, \qquad \left.\varphi_{xx}\right|_{t=0}=O(\| x \|^2)\,.
\end{equation}
Furthermore, we have
\begin{equation}
\label{b1 small time step 2}
\varphi_{x^\rho\eta_\sigma}=\delta_\rho{}^\sigma+t\,\frac{2}{3h}
\left(
R^\sigma{}_\rho{}^\beta{}_\nu(0)\,\eta_\beta
+
R^\alpha{}_\rho{}^\sigma{}_\nu(0)\,\eta_\alpha
\right)
x^\nu
+O(\|x\|^3+|t|^3)
\end{equation}
and, consequently,
\begin{equation}
\label{b1 small time step 3}
\begin{split}
\det\varphi_{x^\rho\eta_\sigma}= 1-t\,\frac{2}{3h}\,
\operatorname{Ric}^\alpha{}_\nu(0)\,\eta_\alpha\,x^\nu
+O(\|x\|^3+|t|^3).
\end{split}
\end{equation}
Plugging
\eqref{b1 small time step 3} into \eqref{weight w} and expanding the Riemannian density in normal geodesic coordinates, one eventually obtains
\begin{equation}\label{w expansion for small t}
w=1+\frac1{12}\,\operatorname{Ric}_{\mu\nu}(0)\,x^\mu\,x^\nu -\frac t {3h}\,\operatorname{Ric}^\alpha{}_\nu(0)\,\eta_\alpha \,x^\nu
+O(\|x\|^3+|t|^3).
\end{equation}
Formulae
\eqref{identity gammas vs Riemann normal coordinates},
\eqref{phase function small time},
\eqref{b1 small time step 1}
and
\eqref{w expansion for small t}
give us
\begin{equation}\label{b1 small t temp1}
\begin{split}
&\left. - \ir  g^{\alpha\beta}(x)\,\nabla_\alpha \nabla_\beta\, \varphi\right|_{t=0} 
= - \ir 
g^{\alpha\beta}(x)\,
 \bigl(-\Gamma^\gamma{}_{\alpha\beta}(x)\left.\varphi_{x^\gamma}\right|_{t=0}+O(\| x  \|^2)\bigr)
\\
&\qquad= - \ir 
\bigl(\delta^{\alpha\beta}+O(\|x  \|^2)\bigr)
\bigl(
\tfrac13
(R^\gamma{}_{\alpha\beta\mu}(0)
+
R^\gamma{}_{\beta\alpha\mu}(0))
\,x^\mu
\,\eta_\gamma+O(\| x \|^2)
\bigr)
\\
&\qquad= 
\frac{2\,\ir}{3}\operatorname{Ric}^\gamma{}_\mu(0)\,\eta_\gamma\,x^\mu
+O(\| x \|^2),
\end{split}
\end{equation}
\begin{equation}\label{b1 small t temp2}
\left. 2\,\ir (\ln\, w)_t\,\varphi_t\right|_{t=0}=
\frac{2\,\ir}{3}\operatorname{Ric}^\alpha{}_\nu(0)\,\eta_\alpha\,x^\nu
+O(\| x  \|^2),
\end{equation}
\begin{equation}\label{b1 small t temp3}
\left. - 2\,\ir \,g^{\alpha\beta}(x)\,[\nabla_\alpha (\ln\, w)] \,\nabla_\beta\, \varphi\right|_{t=0}=
-\frac{\ir}{3}\operatorname{Ric}^\alpha{}_\nu(0)\,\eta_\alpha\,x^\nu
+O(\| x  \|^2).
\end{equation}
Substitution of
\eqref{b1 small time step 1}
and
\eqref{b1 small t temp1}--\eqref{b1 small t temp3}
into \eqref{b1}
yields
\begin{equation}
\label{dimas label for b1 at time zero}
b_1(0,x;0,\eta)= \ir\,\operatorname{Ric}^\alpha{}_\mu(0)\,\eta_\alpha\,x^\mu+O(\| x  \|^2).
\end{equation}

Finally, let us deal with $b_0$. Formula \eqref{w expansion for small t} implies that
\begin{equation*}
\begin{aligned}
&w=\,1+O(\|x\|^2+t^2),\qquad
&&w_{tt}=\, O(\|x\|+|t|),
\\
&w_x=\,O(\|x\|+|t|),\qquad
&&w_{xx}=\,\frac1{6}\,\operatorname{Ric}(0)+O(\|x\|+|t|).
\end{aligned}
\end{equation*}
Substituting the above formulae into  \eqref{b0}, we get
\begin{equation}
\label{dimas label for b0 at time zero}
b_0(0,x;0,\eta)=-\frac16\,\mathcal{R}(0)+O(\|x \|).
\end{equation}

Theorem \ref{theorem ODE subprincipal symbol} tells us that
\begin{equation}\label{dima small time temp 1}
\mathfrak{a}_{-1}(t;0,\eta)
=
-\dfrac{\ir}{2\,h}
\left.
\left[ {\mathfrak{S}}_{-2}\,b_2 +{\mathfrak{S}}_{-1}\,b_1 +\mathfrak{S}_0\,b_0 \right]
\right|_{t=0}t+ O(t^2)\,.
\end{equation}
Recall that the ${\mathfrak{S}}_{-2}$, ${\mathfrak{S}}_{-1}$ and $\mathfrak{S}_0$
in the above formula are the amplitude-to-symbol operators.

Calculating the last term in the square brackets in \eqref{dima small time temp 1} is easy.
Namely, using \eqref{dimas label for b0 at time zero}, we get
\begin{equation}\label{dima small time temp 2}
\left.
\left[\mathfrak{S}_0\,b_0 \right]
\right|_{t=0}
=
b_0(0,0;0,\eta)
=
-\frac16\,\mathcal{R}(0)\,.
\end{equation}

Calculating the first two terms in the square brackets in
\eqref{dima small time temp 1}
seems to be a challenging task because the formulae for the operators
${\mathfrak{S}}_{-2}$ and ${\mathfrak{S}}_{-1}$ are complicated.
However, at $t=0$ and in chosen local coordinates our phase function reads
\begin{equation*}\label{dima small time temp 3}
\varphi(0,x;0,\eta)=x^\alpha\,\eta_\alpha
\end{equation*}
and this leads to fundamental simplifications.
Namely, at $t=0$ we have
\begin{eqnarray}
\label{dima small time temp 4}
\left.
\left[\mathfrak{S}_{-1}\,(\ \cdot\ )\right]
\right|_{t=0}
&=&
\left.
\left[
\ir \,\dfrac{\partial}{\partial \eta_\alpha}\dfrac{\partial}{\partial x^\alpha}
\,(\ \cdot\ )
\right]
\right|_{t=0,\ x=0}
\,,\\
\label{dima small time temp 5}
\left.
\left[\mathfrak{S}_{-2}\,(\ \cdot\ )\right]
\right|_{t=0}
&=&
\dfrac12
\left.
\left[\left(
\ir \,\dfrac{\partial}{\partial \eta_\alpha}\dfrac{\partial}{\partial x^\alpha}\right)^2
(\ \cdot\ )
\right]
\right|_{t=0,\ x=0}
\,.
\end{eqnarray}

Substituting
\eqref{dimas label for b2 at time zero}
and
\eqref{dimas label for b1 at time zero}
into
\eqref{dima small time temp 4}
and
\eqref{dima small time temp 5}
respectively, we get
\begin{equation}\label{dima small time temp 6}
\left.
\left[\mathfrak{S}_{-2}\,b_2 \right]
\right|_{t=0}
=
\mathcal{R}(0)\,,
\qquad
\left.
\left[\mathfrak{S}_{-1}\,b_1 \right]
\right|_{t=0}
=
-\mathcal{R}(0)\,.
\end{equation}
Formulae
\eqref{dima small time temp 1},
\eqref{dima small time temp 2}
and
\eqref{dima small time temp 6}
imply
\eqref{subprincipal small times proof}.
\end{proof}

\section{Explicit examples}
\label{Explicit examples}
In this section we will apply our construction to the detailed analysis of two explicit examples.

\subsection{The 2-sphere}
The first example we will discuss is the 2-sphere. Clearly, for the 2-sphere one can construct the propagator via functional calculus, since eigenvalues and eigenfunctions are known explicitly. However, the 2-sphere is interesting as it represents, in a sense, the `most singular' instance of a Riemannian manifold in terms of obstructions caused by caustics because the geodesic flow on the cosphere bundle is $2\pi$-periodic.
Furthermore, geodesics focus at $t=\pi k$, $k\in\mathbb{Z}$.
As we will show, even in this simple example our method provides significant insight.

Let $\mathbb{S}^2$ be the standard 2-sphere embedded in Euclidean space $(\mathbb{E}^3, \delta_E:=\dr x^2+\dr y^2+\dr z^2)$ via the map $\iota:\mathbb{S}^2\to \mathbb{E}^3$, in such a way that the south pole is tangent to the plane $z=0$ at the origin $O=(0,0,0)$. The sphere is endowed with the standard round metric $g:=\iota^* \delta_E\,$.

Let us introduce coordinates on $\mathbb{S}^2$ minus the north pole by a stereographic projection onto the $xy$-plane,
\begin{gather}
\label{map sigma}
\sigma: \mathbb{R}^2 \to \mathbb{S}^2\setminus \begin{pmatrix}
0\\0\\2
\end{pmatrix}\,, \qquad
\begin{pmatrix}
u\\v
\end{pmatrix} \mapsto \begin{pmatrix}
x\\y\\z
\end{pmatrix} =\dfrac{1}{1+K^2}
\begin{pmatrix}
u\\
v\\
2 K^2
\end{pmatrix},
\end{gather}
where
$
K:=\frac{\sqrt{u^2+v^2}}{2}.
$
The metric in stereographic coordinates reads
\begin{equation}\label{metric sphere stereographic coordinates}
g=\dfrac{1}{(1+K^2)^2}\left[ \dr u^2 +\dr v^2 \right].
\end{equation}
Without loss of generality, we will set $y=(0,0)\in \mathbb{R}^2$ in stereographic coordinates. Further on we denote by $z=(u,v)$ a generic point on the stereographic plane. Straightforward analysis shows that 
\begin{subequations}\label{geodesic flow for 2-sphere}
\begin{equation}\label{z star 2sphere}
z^*(t;\eta)=2\,\tan(t/2)\,\dfrac{\eta}{\|\eta\|}\,,\\
\end{equation}
\begin{equation}\label{xi star 2sphere}
\xi^*(t;\eta)= \cos^2(t/2)\,\eta\,,
\end{equation}
\end{subequations}
provide a solution to the Hamiltonian system \eqref{Hamiltonian flow} for the Hamiltonian \eqref{hamiltonian laplacian} with initial conditions $z^*(0,\eta)=(0,0)$ and $\xi^*(0,\eta)=\eta=(\eta_1,\eta_2)$.

Our first goal is to compute the scalar part of the weight $w^2$ along the flow, i.e.\
\begin{equation*}
\left.       
\dfrac{\rho(y)}{\rho(z)}
\det\varphi_{z^\alpha\eta_\beta}
\right|_{z=z^*}\,,
\end{equation*}
for the Levi-Civita phase function $\varphi$ on the sphere associated with the metric $g$.

\begin{lemma}\label{lemma scalar part of w 2-sphere}
For the 2-sphere we have
\begin{equation}\label{scalar part of w 2-sphere}
\left.       
\dfrac{\rho(y)}{\rho(z)}
\det \,\varphi_{z^\alpha\eta_\beta}
\right|_{z=z^*}\,= \cos(t)-\ir\,\epsilon\,\sin(t).
\end{equation}
\end{lemma}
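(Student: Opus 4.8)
The strategy is to evaluate the two‑point tensor $\left.\varphi_{z^\alpha\eta_\beta}\right|_{z=z^*}$ directly from the explicit formula \eqref{phixeta at x=xstar} of Theorem~\ref{theorem phixeta at x=xstar}, which for the sphere needs only the flow \eqref{geodesic flow for 2-sphere}, the conformal metric \eqref{metric sphere stereographic coordinates}, and its Christoffel symbols, all restricted to $z=z^*(t;\eta)$. Set $C:=\cos^2(t/2)$, $T:=\tan(t/2)$, $r:=\|\eta\|=h(y,\eta)$ (the last equality because $g(y)=\delta$ at $y=0$), and write $P_{\alpha\beta}:=\eta_\alpha\eta_\beta/r^2$, $Q_{\alpha\beta}:=\delta_{\alpha\beta}-P_{\alpha\beta}$ for the complementary rank‑one orthogonal projections. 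Differentiating \eqref{xi star 2sphere} and \eqref{z star 2sphere} in $\eta$ gives
\[
\frac{\partial\xi^*_\alpha}{\partial\eta_\beta}=C\,\delta_{\alpha\beta}=C\,(P_{\alpha\beta}+Q_{\alpha\beta}),
\qquad
\frac{\partial z^{*\nu}}{\partial\eta_\beta}=\frac{2T}{r}\,\Bigl(\delta_{\nu\beta}-\frac{\eta_\nu\eta_\beta}{r^2}\Bigr)=\frac{2T}{r}\,Q_{\nu\beta},
\]
the second being purely tangential because $z^*$ is homogeneous of degree $0$ in $\eta$. Along the flow $\|z^*\|^2=4T^2$, hence $K^2\big|_{z=z^*}=T^2$, $\,1+K^2\big|_{z=z^*}=\cos^{-2}(t/2)$, so $g_{\alpha\beta}(z^*)=\cos^4(t/2)\,\delta_{\alpha\beta}$, $\rho(z^*)=\cos^4(t/2)$, $\rho(y)=1$, and $\rho(y)/\rho(z^*)=\cos^{-4}(t/2)$.

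For the Christoffel term I would exploit that \eqref{metric sphere stereographic coordinates} is conformally flat, $g=e^{2\phi}\delta$ with $\phi=-\ln(1+K^2)$, so that $\Gamma^\mu{}_{\alpha\nu}=\delta^\mu{}_\alpha\partial_\nu\phi+\delta^\mu{}_\nu\partial_\alpha\phi-\delta_{\alpha\nu}\,\delta^{\mu\rho}\partial_\rho\phi$ with $\partial_\nu\phi=-z_\nu/(2(1+K^2))$, which evaluated at $z=z^*$ equals $-\tfrac{\sin t}{2r}\,\eta_\nu$ (using $CT=\tfrac12\sin t$). Contracting with $\xi^*_\mu=C\eta_\mu$ and with $\partial z^{*\nu}/\partial\eta_\beta$, the terms proportional to $\eta_\alpha\eta_\beta$ cancel, leaving
\[
\Gamma^\mu{}_{\alpha\nu}(z^*)\,\xi^*_\mu\,\frac{\partial z^{*\nu}}{\partial\eta_\beta}=\frac{\sin^2 t}{2}\,Q_{\alpha\beta},
\qquad
\ir\,\epsilon\,h(y,\eta)\,g_{\alpha\nu}(z^*)\,\frac{\partial z^{*\nu}}{\partial\eta_\beta}=\ir\,\epsilon\,\frac{\sin t\,(1+\cos t)}{2}\,Q_{\alpha\beta},
\]
the last identity using $h(y,\eta)=r$ and $2C^2T=2\cos^3(t/2)\sin(t/2)=\tfrac12\sin t\,(1+\cos t)$.

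Assembling the three pieces according to \eqref{phixeta at x=xstar} yields
\[
\left.\varphi_{z^\alpha\eta_\beta}\right|_{z=z^*}=C\,P_{\alpha\beta}+\Bigl(C-\tfrac{\sin^2 t}{2}-\ir\,\epsilon\,\tfrac{\sin t(1+\cos t)}{2}\Bigr)\,Q_{\alpha\beta},
\]
i.e.\ the tensor is diagonal in the orthogonal splitting $\mathbb{R}^2=\mathbb{R}\eta\oplus(\mathbb{R}\eta)^\perp$, so its determinant is the product of the two eigenvalues. The radial eigenvalue is $C=\cos^2(t/2)$; via the half‑angle identities ($C=\tfrac{1+\cos t}{2}$, $\sin^2 t=(1-\cos t)(1+\cos t)$) the tangential eigenvalue factors as $\tfrac{1+\cos t}{2}(\cos t-\ir\,\epsilon\sin t)=\cos^2(t/2)(\cos t-\ir\,\epsilon\sin t)$. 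Hence $\det\varphi_{z^\alpha\eta_\beta}\big|_{z=z^*}=\cos^4(t/2)\,(\cos t-\ir\,\epsilon\sin t)$, and multiplying by $\rho(y)/\rho(z^*)=\cos^{-4}(t/2)$ gives \eqref{scalar part of w 2-sphere}.

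The argument is wholly elementary; the only genuine work is the bookkeeping in the Christoffel contraction — verifying that the $\eta_\alpha\eta_\beta$ contributions annihilate — and the half‑angle simplifications, with no conceptual obstacle. As a consistency check one notes that the right‑hand side of \eqref{scalar part of w 2-sphere} equals $1$ at $t=0$, in agreement with the normalisation of the branch of the root built into the weight $w$; since the expression is continuous in $t$ and never vanishes for real $\epsilon>0$, it is indeed the correct representative of $\bigl[{\det}^2\varphi_{z\eta}\bigr]^{1/2}$ along the flow.
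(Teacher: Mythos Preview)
Your proof is correct and follows essentially the same approach as the paper: both apply formula~\eqref{phixeta at x=xstar} from Theorem~\ref{theorem phixeta at x=xstar}, compute the Christoffel symbols and flow derivatives along $z^*$, assemble the matrix, and take its determinant. The only difference is organisational --- where the paper lists the six Christoffel symbols individually and writes out the $2\times2$ matrix entry by entry, you exploit the conformal flatness of the metric and the orthogonal projections $P,Q$ onto $\mathbb{R}\eta$ and $(\mathbb{R}\eta)^\perp$, which diagonalises the tensor and makes the determinant a product of two scalars; this is a tidier bookkeeping device but not a different idea.
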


\begin{proof}
A key ingredient in the computation of \eqref{scalar part of w 2-sphere} is formula \eqref{phixeta at x=xstar} from Theorem~\ref{theorem phixeta at x=xstar}. As a first step, we need to compute the Christoffel symbols of $g$ along the geodesic flow. 

By means of \eqref{metric sphere stereographic coordinates} and \eqref{geodesic flow for 2-sphere}, one obtains
\begin{equation}\label{Christoffel 2-sphere along flow}
\begin{alignedat}{2}
\Gamma^u{}_{uu}(u^\ast,v^\ast)&= - \frac{\sin(t)}{2} \dfrac{\eta_u}{\|\eta \|},\quad&
\Gamma^{u}{}_{uv}(u^\ast,v^\ast)&= - \frac{\sin(t)}{2} \dfrac{\eta_v}{\|\eta \|}, \\
\Gamma^{u}{}_{vv}(u^\ast,v^\ast)&=\frac{\sin(t)}{2} \dfrac{\eta_u}{\|\eta \|},\quad&
\Gamma^v{}_{vv}(u^\ast,v^\ast)&=- \frac{\sin(t)}{2} \dfrac{\eta_v}{\|\eta \|},\\
\Gamma^{v}{}_{vu}(u^\ast,v^\ast)&=- \frac{\sin(t)}{2} \dfrac{\eta_u}{\|\eta \|},\quad&
\Gamma^{v}{}_{uu}(u^\ast,v^\ast)&= \frac{\sin(t)}{2} \dfrac{\eta_v}{\|\eta \|}\,.
\end{alignedat}
\end{equation}
Substituting \eqref{metric sphere stereographic coordinates}, \eqref{geodesic flow for 2-sphere} and \eqref{Christoffel 2-sphere along flow}
into \eqref{phixeta at x=xstar}, we get
\[
\begin{split}
&\left. \varphi_{z^\alpha\eta_\beta} \right|_{z=z^*}=\cos^2(t/2)\\
&\times
\begin{pmatrix}
1- [1-\cos(t)+\ir\,\epsilon\sin(t)] \dfrac{\eta_2^2}{\|\eta\|^2}
&
[1-\cos(t)+\ir\,\epsilon\sin(t)]\dfrac{\eta_1\eta_2}{\|\eta\|^2}\\
[1-\cos(t)+\ir\,\epsilon\sin(t)]\dfrac{\eta_1\eta_2}{\|\eta\|^2}
&
1- [1-\cos(t)+\ir\,\epsilon\sin(t)] \dfrac{\eta_1^2}{\|\eta\|^2}
\end{pmatrix}\,,
\end{split}
\]
from which it ensues that
\[
\left. \det \varphi_{z^\alpha\eta_\beta} \right|_{z=z^*}=\cos^4(t/2)  \left[\cos(t)-\ir\,\epsilon\sin(t) \right].
\]
Since $\rho(z^*(t;\eta))=\cos^4(t/2)$ and $\rho(y)=1$, this completes the proof.
\end{proof}

Note that \eqref{scalar part of w 2-sphere} is a scalar identity and, as such, independent of the choice of coordinates.

Let $\epsilon=0$, which corresponds to the adoption of a real-valued phase function.
Direct inspection of \eqref{scalar part of w 2-sphere} tells us that
$\left.\varphi_{z\eta}\right|_{z=z^*}$ becomes degenerate at $t=\frac\pi2+\pi k$,
$k\in\mathbb{Z}$ and, consequently, $w$ vanishes at these values of $t$.

If, on the other hand, $\epsilon>0$, then $w$ is non-zero for all values of $t$. This fact is the analytic counterpart of the circumvention of the obstruction caused by caustics.

The result of Lemma~\ref{lemma scalar part of w 2-sphere} can be used to compute the Maslov index. Let $\gamma$ be the lift to the Lagrangian submanifold $\Lambda_h$ of a great circle starting and ending at $y$ and set, for simplicity, $\epsilon=1$.  Then by \eqref{Maslov form}, \eqref{scalar part of w 2-sphere} we get
\[
\vartheta_\varphi=\dfrac{1}{\pi}\,\dr t
\]
and, in view of \eqref{Maslov index}, we conclude that
\[
\operatorname{ind}(\gamma)=\frac1\pi\int_0^{2\pi}\, \dr t=2\,.
\]

Let us now move on to the calculation of the $g$-subprincipal symbol of the wave propagator. For the 2-sphere the geodesic distance between two arbitrary points can be computed explicitly via a closed formula.
With the above notation, consider the auxiliary map
\begin{equation*}\label{distance 2-sphere step 1}
\tilde\sigma: \mathbb{R}^2\to \mathbb{R}^3,\qquad
(u,v) \mapsto \dfrac{1}{1+K^2}
\begin{pmatrix}
u\\
v\\
 K^2-1
\end{pmatrix},
\end{equation*}
which is nothing but the map \eqref{map sigma}
shifted by $(0,0,-1)$.
Then the geodesic distance between $(u,v)$ and $({u}',{v}')$ is given by
\begin{equation}\label{geodesic distance sphere}
\operatorname{dist}((u,v),({u}',{v}'))= \arccos \left[\, \tilde\sigma(u,v)\,\cdot\,\tilde\sigma({u}',{v}')\, \right],
\end{equation}
where the dot stands for the inner product in $\mathbb{E}^3$.

Formulae \eqref{geodesic distance sphere} and \eqref{Hamiltonian flow} yield an explicit representation for \eqref{phase function with distance}, which can be used to set up the algorithm described in Section~\ref{The subprincipal symbol of the propagator}.

For $\epsilon=1$ the functions appearing on the RHS of \eqref{ODE subprincipal symbol} read
\begin{subequations}
\begin{equation}\label{sigma 2 sphere}
{\mathfrak{S}}_{-2}\,b_2=\frac14 \left(-3+2\er^{2\,\ir\,t}+\er^{4\,\ir\,t}  \right),
\end{equation}
\begin{equation}\label{sigma 1 sphere}
{\mathfrak{S}}_{-1}\,b_1=\frac16 \left( 7-4\er^{2\,\ir\,t}-3 \er^{4\,\ir\,t} \right),
\end{equation}
\begin{equation}\label{sigma 0 sphere}
{\mathfrak{S}}_{0}\,b_0=\frac1{12} \left(-8 + \er^{2\,\ir\,t}\right).
\end{equation}
\end{subequations}
Substitution of \eqref{sigma 2 sphere}--\eqref{sigma 0 sphere}
into \eqref{ODE subprincipal symbol} yields a formula for the $g$-subprincipal symbol:
\begin{equation*}\label{subprincipal symbol 2-sphere epsilon equals 1}
\mathfrak{a}_{-1}(t;y,\eta;1)=
\dfrac{\ir\,t}{8\,\|\eta\|}+
\dfrac{2\er^{2\,\ir\,t}+3\er^{4\,\ir\,t}-5}{96\,\|\eta\|}\,.
\end{equation*}

For a general $\epsilon>0$ the corresponding formulae are more
complicated and the final expression for the $g$-subprincipal symbol reads
\begin{equation}\label{subprincipal symbol 2-sphere}
\mathfrak{a}_{-1}(t;y,\eta;\epsilon)=\dfrac{\ir\,t}{8\,\|\eta\|}+\dfrac{
\ir\sin(2t)-  4\epsilon\sin^2(t)        
+3 \,\ir\,\epsilon^2\sin(2t) +6\epsilon^3 \sin^2(t)}{48\,\|\eta\|\, (\cos (t)-\ir \epsilon  \sin (t))^2}\,.
\end{equation}

\begin{remark}
For $t\ne\pi/2+\pi k$, $k\in\mathbb{Z}$,
the $g$-subprincipal symbol admits the following expansion in powers of $\epsilon$:
\[
\begin{split}
\mathfrak{a}_{-1}(t;y,\eta;\epsilon)=
\dfrac{\ir\,t}{8\,\|\eta\|}&+\dfrac{\ir\,\tan(t)}{24\,\|\eta\|}
-\dfrac{\epsilon\,\tan^2(t)}{6\,\|\eta\|}\\
&+\ir\sum_{k=2}^\infty \dfrac{(\ir\,\epsilon)^k}{24\,\|\eta\|}\left( \tan(t) \right)^{k-1}\left( (3k+1)\tan^2(t)-3 \right).
\end{split}
\]
Note that for $\epsilon=0$ the above formula turns to
\begin{equation}
\label{subprincipal symbol 2-sphere epsilon 0}
\mathfrak{a}_{-1}(t;y,\eta;0)=
\frac \ir{24\,\|\eta\|}\left(3 t + \tan(t)\right),
\end{equation}
which is the $g$-subprincipal symbol of the propagator for
the real-valued Levi-Civita phase function.
Of course, formula
\eqref{subprincipal symbol 2-sphere epsilon 0}
can only be used for $t\in(-\pi/2,\pi/2)$:
topological obstructions prevent the use of the real-valued phase
function for large $t$.
It is easy to check that
\eqref{subprincipal symbol 2-sphere epsilon 0} agrees with
\eqref{subprincipal small times}, with $\mathcal{R}(y)=2$.
\end{remark}

\

Let us now run a test for our formula \eqref{subprincipal symbol 2-sphere}.
To this end, let us shift the Laplacian by a quarter,
\begin{equation}
\label{quarter test 1}
-\Delta\mapsto-\Delta+\frac14\,.
\end{equation}
Note that the eigenvalues of the operator $\,\sqrt{-\Delta+1/4}\,$ are half-integer, hence, the corresponding propagator
$\,\widetilde U(t):=\er^{-\ir t\sqrt{-\Delta+1/4}}\ $ is $2\pi$-antiperiodic,
\begin{equation}
\label{antiperiodic}
\widetilde U(t+2\pi)=-\widetilde U(t).
\end{equation}
Going back to Lemma~\ref{lemma decomposition of b}, we see that the shift of the Laplacian
\eqref{quarter test 1} does not affect $b_2$ and $b_1$, but shifts $b_0$ as
\begin{equation}
\label{quarter test 2}
b_0\mapsto b_0+1/4\,.
\end{equation}
Theorem~\ref{theorem ODE subprincipal symbol} and formula \eqref{quarter test 2} tell us that the
$g$-subprincipal symbol of the propagator transforms as
\begin{equation}
\label{quarter test 3}
\mathfrak{a}_{-1}(t;y,\eta;0)\mapsto \mathfrak{a}_{-1}(t;y,\eta;0)
-\frac{\ir\,t}{8\|\eta\|}\,.
\end{equation}
Applying the transformation  \eqref{quarter test 3} to formula \eqref{subprincipal symbol 2-sphere},
we see that the $g$-subprincipal symbol of the propagator becomes $2\pi$-periodic.
It remains only to reconcile the periodicity of the full symbol of the propagator with the antiperiodicity  \eqref{antiperiodic} of the propagator itself.
This is to do with the Maslov index: formulae
\eqref{weight w}
and
\eqref{scalar part of w 2-sphere}
tell us that the weight $w$ picks up a change of sign as we traverse the periodic geodesic, a great circle.

It is known that constructing the wave propagator associated with the shifted Laplacian \eqref{quarter test 1} is often easier and some formulae are available in the literature. For example, formulae for the wave kernel of shifted Laplacians on rank one symmetric spaces was computed in \cite{bunke}. See also \cite{cheeger}, \cite[Section~3]{smyshlyaev}. 

\

\subsection{The hyperbolic plane}
From a strictly rigorous point of view, our construction works for closed manifolds only. However, the compactness assumption is largely technical and can be relaxed, even though this generalisation is not absolutely straightforward. In the current paper we refrain from carrying out such an extension, but we discuss a non-compact example, formally applying our algorithm to the hyperbolic plane.

Adopting the hyperboloid model for the hyperbolic plane, we consider
the upper sheet of the hyperboloid
\[
\mathbb{H}^2:=\{ (x,y,z)\in\mathbb{R}^3\,|\,x^2+y^2-z^2=-1, \,	\,z>0   \}
\]
endowed with metric $\delta_\mathbb{H}=\dr x^2+\dr y^2-\dr z^2$.
Projecting $\mathbb{H}^2$ onto $\mathbb{R}^2$ with coordinates $(u,v)$, we obtain the induced metric
\[
g=\dfrac{1}{1+u^2+v^2}\left[(1+v^2)\,\dr u^2 - 2 uv \,\dr u\,\dr v+(1+u^2)\,\dr v^2 \right].
\]
The metric $g$ is Riemannian, with constant Gaussian curvature equal to $-1$.

Setting, without loss of generality, $y=0$ and denoting $z=(u,v)$, the cogeodesic flow is given by
\begin{subequations}\label{geodesic flow hyperbolic plane}
\begin{equation*}\label{zstar hyperbolic plane}
z^*(t;\eta)=\sinh (t)\, \dfrac{\eta}{\|\eta\|}\,,
\end{equation*}
\begin{equation*}\label{xistar hyperbolic plane}
\xi^*(t;\eta)=\dfrac{1}{\cosh(t)}\,\eta\,.
\end{equation*}
\end{subequations}
Unlike the sphere, the hyperbolic plane does not present caustics due to its negative curvature.
Hence, there are no obstructions to a construction global in time
with real-valued phase function.
In particular, the Levi-Civita phase function with $\epsilon=0$ can be used.

Arguing as for the 2-sphere, one gets for $\epsilon\ge0$
\[
\begin{split}
&\qquad\left.\varphi_{z^\alpha\eta_\beta} \right|_{z=x^\ast}=
\frac{1}{\|\eta\|^2}\\
&\times\begin{pmatrix}
\eta_1^2\operatorname{sech}(t)+\eta_2^2(\cosh(t)+\ir\,\epsilon\sinh(t)) 
&
-\eta_1\eta_2\tanh(t) (\sinh(t)+\ir\,\epsilon\cosh(t))
\\
-\eta_1\eta_2\tanh(t) (\sinh(t)+\ir\,\epsilon\cosh(t))
&
\eta_2^2\operatorname{sech}(t)+\eta_1^2(\cosh(t)+\ir\,\epsilon\sinh(t))
\end{pmatrix}
\end{split}
\]
and
\begin{equation*}
\label{scalar part of w squared hyperbolic plane}
\left.       
\dfrac{\rho(y)}{\rho(x)}
\det \,\varphi_{z^\alpha\eta_\beta}
\right|_{z=z^*}\,=\cosh(t)+\ir\,\epsilon\,\sinh(t).
\end{equation*}
Direct inspection immediately reveals that, as expected, $\left. \varphi_{z\eta}\right|_{z=z^*}$ is non-degenerate for all times, even with $\epsilon=0$.

Carrying out our algorithm for $\epsilon=0$, we establish
that the homogeneous components of the reduced amplitude read
\begin{subequations}
\begin{equation*}
\mathfrak{S}_{-2}\,a_2=-\frac23 (2 + \cosh(2 t)) \operatorname{sech}^2(t)\,,
\end{equation*}
\begin{equation*}
\mathfrak{S}_{-1}\,a_1=\frac23 (2 + \cosh(2 t)) \operatorname{sech}^2(t)\,,
\end{equation*}
\begin{equation*}
\mathfrak{S}_{0}\,a_0=\frac1{12} (3 + \operatorname{sech}^2(t))\,.
\end{equation*}
\end{subequations}
Substitution of the above expressions into
\eqref{ODE subprincipal symbol} yields a formula for the $g$-subprincipal symbol:
\begin{equation}\label{subprincipal symbol hyperbolic plane}
\mathfrak{a}_{-1}(t;y,\eta;0)=-\frac \ir{24\,\|\eta\|}\left(3 t + \tanh(t)\right).
\end{equation}

Note that formulae for the hyperbolic plane are very similar to those for the sphere, with trigonometric functions being replaced by their hyperbolic counterparts. This is consistent with the results in \cite{taylor}, see also \cite[Sec.~3.7.2]{Zel3}. Formula \eqref{subprincipal symbol hyperbolic plane} is, of course, in agreement with
\eqref{subprincipal small times}, with $\mathcal{R}(y)=-2$.

\

Our explicit examples gave us the opportunity to illustrate, once again, the importance of formula \eqref{phixeta at x=xstar}: it allows one to extract topological information by means of a simple direct computation. 

\section{Circumventing the obstructions: a geometric picture}
\label{Circumventing topological obstructions: a geometric picture}

As discussed in the previous sections, the weight $w$
defined by formula \eqref{weight w}
is a crucial object in our mathematical construction in that it carries important information about $\Lambda_h$. It is possible, for instance, to compute the Maslov index purely in terms of $w$. The fact that, in general, a construction global in time is impossible using real-valued phase functions can be traced back to the degeneracy of $w$. In this section we will provide a geometric description of $\varphi_{x\eta}\,$, the key ingredient of $w$, along the flow.

Let us fix a point $y\in M$ and consider the one-parameter family of $d$-dimensional smooth submanifolds of the cotangent bundle defined by
\begin{equation*}
\mathcal{T}_{y}(t):=\{ (x^*(t;y,\eta),\xi^*(t;y,\eta))\in T^*M\, |\, \eta\in T'_y M  \}.
\end{equation*}
For every value of $t\,$, $\mathcal{T}_y(t)$ consists of all points of the cotangent bundle corresponding to the cogeodesic flow at time $t$ for the initial position $y$ and all possible momenta.
The smoothness of $\mathcal{T}_y(t)$ follows, for example, from the preservation of
the symplectic volume.

The manifolds $\mathcal{T}_y(t)$ are Lagrangian. In fact, $\mathcal{T}_y(0)=T'_yM=T^*_yM\setminus\{0\}$
is the punctured cotangent fibre at $y$, which is clearly Lagrangian, and the cogeodesic flow preserves the symplectic form.

In the following we will construct a family of metrics associated with the above submanifolds. In the rest of this section we will drop the arguments $t$ and $y$ in $x^*$ and $\xi^*$ whenever these arguments are fixed, writing simply $x^*(\eta)$ and $\xi^*(\eta)$.

In an arbitrary coordinate system a small increment $\delta \eta$ in momentum produces an increment in $x^*(\eta)$ given by
\begin{equation*}
\begin{split}
[x^*(\eta+\delta\eta)-x^*(\eta)]^\alpha=[x^*(\eta)]^\alpha_{\eta_\mu}\,\delta\eta_\mu +O(\|\delta\eta\|^2).
\end{split}
\end{equation*}
This allows us to define a bilinear form
\begin{equation}\label{position form}
\begin{split}
Q^{\mu\nu}(\eta;t,y):= g_{\alpha\beta}(x^*(\eta))\,q^{\alpha\mu}(\eta;t,y)\,q^{\beta\nu}(\eta;t,y)\,,
\end{split}
\end{equation}
where 
\begin{equation}
\label{q}
q^{\alpha\mu}(\eta;t,y):=[x^*(\eta)]^\alpha_{\eta_\mu}\,.
\end{equation}
We call $Q$ \emph{the position form}.

An analogous construction is possible for momentum $\xi^*(\eta)$, although extra care is needed due to the fact that $\xi^*(\eta)$ and $\xi^*(\eta+\delta\eta)$ live in different fibres of the bundle. Under the assumption that $\delta\eta$ is sufficiently small, let us parallel transport $\xi^*(\eta+\delta\eta)$ along the (unique) geodesic going from $x^*(\eta+\delta\eta)$ to $x^*(\eta)$, denoted by $\gamma:[0,1]\to M$. The parallel transport equation reads
\begin{equation}\label{parallel transport momentum eta+deta}
\dot \gamma^\alpha(s)\,\nabla_\alpha\,\zeta(\gamma(s))_\beta=\dot \gamma^\alpha(s)\,[\partial_\alpha \zeta_\beta(\gamma(s))- \Gamma^{\rho}{}_{\alpha\beta}(\gamma(s))\,\zeta_\rho(\gamma(s))]= 0\,,
\end{equation}
where $\zeta$ denotes the image under parallel transport of $\xi^*(\eta+\delta\eta)$ along $\gamma$.
It is not hard to check that the solution to \eqref{parallel transport momentum eta+deta} is given by
\[
\zeta_\alpha(\gamma(s))=\xi^*_{\alpha}(\eta+\delta \eta)+\Gamma^\rho{}_{\alpha\beta}(\gamma(s))\,\xi^*_\rho(\eta)\,s\,(\delta x^*)^\beta +O(\| \delta x^* \|^2)\,,
\]
where $\delta x^*=x^*(\eta)- x^*(\eta+\delta \eta)$. Hence, we get
\[
\begin{split}
\zeta_\alpha(\gamma(1))-\xi^*_\alpha(\eta)&= \left[(\xi^*_\alpha(\eta))_{\eta_\mu}- \Gamma^\rho{}_{\alpha\beta}(x^*(\eta))\,\xi^*_\rho(\eta)\,(x^*(\eta))^\beta_{\eta_\mu}  \right]\,(\delta\eta)_\mu\\
&+O(\|\delta\eta\|^2)\,.
\end{split}
\]
Put
\begin{equation}\label{p}
p^{\alpha\mu}(\eta;t,y):=g^{\alpha\gamma}(x^*(\eta))
\left[(\xi^*_\gamma(\eta))_{\eta_\mu}- \Gamma^\rho{}_{\gamma\beta}(x^*(\eta))\,\xi^*_\rho(\eta)\,(x^*(\eta))^\beta_{\eta_\mu} \right]
\end{equation}
and define the bilinear form
\begin{equation}\label{momentum form}
P^{\mu\nu}(\eta;t,y):=g_{\alpha\beta}(x^*(\eta))\, p^{\alpha\mu}(\eta;t,y)\,p^{\beta\nu}(\eta;t,y)\,.
\end{equation}
We call $P$ \emph{the momentum form}.

It is convenient, at this point, to redefine the position and momentum forms by lowering their indices using the metric $g$ at the point $y$. Hence, further on we have $Q=Q_{\mu\nu}$ and $P=P_{\mu\nu}$. Clearly, by construction, we have
\begin{equation*}
Q,P \in C^\infty(\mathcal{T}_y(t);\otimes_s^2 T^*\mathcal{T}_y(t)).
\end{equation*}
Our $Q$ and $P$ are natural candidates for metrics on $\mathcal{T}_y(t)$. This turns out not to be the case: $P$ and $Q$ are pseudometrics but not necessarily metrics. However, their sum is a metric.

\begin{theorem}
Let $a$ and $b$ be positive parameters.
Then the  linear combination of the position and momentum forms
\begin{equation}
\label{formula with a and b}
ah^2Q+bP\in C^\infty(\mathcal{T}_y(t);\otimes^2_s T^*\mathcal{T}_y(t))
\end{equation}
is a metric.
\end{theorem}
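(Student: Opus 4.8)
The plan is to verify that, at each point of $\mathcal{T}_y(t)$, the symmetric $(0,2)$-tensor \eqref{formula with a and b} is positive definite; smoothness and symmetry have already been recorded in the construction of $Q$ and $P$ (they are sections of $\otimes^2_s T^*\mathcal{T}_y(t)$), so positivity is the only thing left to prove. Recall that $h$ here is the cogeodesic Hamiltonian \eqref{hamiltonian laplacian}, so $h>0$ on $T'M$, and that a tangent vector to $\mathcal{T}_y(t)$ at the point with parameter $\eta$ may be identified with an increment $\delta\eta\in T_\eta T'_yM$ via the parameterisation $\eta\mapsto(x^*(t;y,\eta),\xi^*(t;y,\eta))$.

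First I would note that $Q$ and $P$ are individually positive \emph{semi}definite. Writing $\delta x^{*\alpha}:=q^{\alpha\mu}\,\delta\eta_\mu$ and $r_\gamma:=(\xi^*_\gamma)_{\eta_\mu}\delta\eta_\mu-\Gamma^\rho{}_{\gamma\beta}(x^*)\,\xi^*_\rho\,\delta x^{*\beta}$, formulae \eqref{position form}, \eqref{q}, \eqref{p}, \eqref{momentum form} give $Q(\delta\eta,\delta\eta)=g_{\alpha\beta}(x^*)\,\delta x^{*\alpha}\,\delta x^{*\beta}$ and $P(\delta\eta,\delta\eta)=g^{\gamma\delta}(x^*)\,r_\gamma\,r_\delta$, both non-negative because $g$ is Riemannian. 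Since $a,b>0$ and $h^2>0$, the form \eqref{formula with a and b} is a positive combination of positive semidefinite forms, hence positive semidefinite; it therefore suffices to rule out a non-trivial null vector.

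Suppose then that $(ah^2Q+bP)(\delta\eta,\delta\eta)=0$ for some $\delta\eta$. By non-negativity of each summand this forces $Q(\delta\eta,\delta\eta)=0$ and $P(\delta\eta,\delta\eta)=0$, whence, by positive definiteness of $g$ and of its dual, $\delta x^*=0$ and $r=0$. But $\delta x^*=0$ annihilates the Christoffel correction in $r$, so $r=0$ collapses to $(\xi^*_\gamma)_{\eta_\mu}\delta\eta_\mu=0$; call this quantity $\delta\xi^*_\gamma$. Thus the pushforward of $\delta\eta$ under $\eta\mapsto(x^*(t;y,\eta),\xi^*(t;y,\eta))$, which in coordinates has components $(\delta x^*,\delta\xi^*)$, vanishes. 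This map is the restriction to the fibre $\{y\}\times T'_yM$ of the Hamiltonian flow solving \eqref{Hamiltonian flow}, which is a diffeomorphism of $T'M$ — its inverse is the flow run backwards in time, and it preserves the symplectic volume. Hence its differential is everywhere injective, so the parameterisation of $\mathcal{T}_y(t)$ by $\eta$ is an immersion, and $\delta\eta=0$. Consequently \eqref{formula with a and b} is positive definite and defines a Riemannian metric on $\mathcal{T}_y(t)$.

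I expect the only substantive point to be the final step: showing that $Q$ and $P$ cannot degenerate in the same direction. This is precisely where one uses that $\mathcal{T}_y(t)$ is a genuine $d$-dimensional submanifold cut out by the flow, equivalently that $\eta\mapsto(x^*,\xi^*)$ is an immersion; without this, non-negativity of the two forms alone would not yield definiteness of their sum. The role of the factors $a$, $b$ and $h^2$ is only to keep the two summands strictly positive (and, via $h^2$, to match their degrees of homogeneity in $\eta$); the argument uses nothing about them beyond positivity.
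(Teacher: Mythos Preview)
Your proof is correct and follows essentially the same line as the paper's: both arguments observe that $Q$ and $P$ are positive semidefinite by construction, reduce the question to showing that a common null direction of $Q$ and $P$ forces $\delta x^*=0$ and $\delta\xi^*=0$, and then invoke the invertibility of the Hamiltonian flow to conclude $\delta\eta=0$. The only cosmetic difference is that the paper passes to geodesic normal coordinates at $x^*$ to kill the Christoffel correction, whereas you achieve the same cancellation directly by noting that $\delta x^*=0$ annihilates that term.
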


The $h$ in the above formula stands for $h(y,\eta)$. This factor has been introduced so that both
terms have the same degree of homogeneity (zero) in $\eta$.

\begin{proof}
Our $Q$ and $P$ are symmetric and can be written as
$Q=q^Tg\,q$, $P=p^Tg\,p$,
which implies that they are non-negative.
To prove that their linear combination
$ah^2Q+bP=ah^2q^Tg\,q+b\,p^Tg\,p\,$
is a metric we only need show that it is non-degenerate.
Choosing normal geodesic coordinates $x$ centred at $x^*(t;y,\eta)$, 
it is easy to see that $v\in T_{(x^*(\eta),\xi^*(\eta))}\mathcal{T}_y(t)$ is in the null space of $ah^2Q+bP$ 
if and only if $v^\flat$ satisfies
\begin{equation}\label{nondegeneracy P+Q vs flow}
[x^*(\eta)]^\alpha_{\eta_\mu}\,v_\mu=0 
\qquad \text{and}\qquad 
[\xi^*_\alpha(\eta)]_{\eta_\mu}\,v_\mu=0\,.
\end{equation}
Since the Hamiltonian flow is non-degenerate, i.e.~it preserves the tautological $1$-form, the two conditions \eqref{nondegeneracy P+Q vs flow} cannot be simultaneously fulfilled unless $v=0$. Therefore, $ah^2Q+bP$ is non-degenerate.
\end{proof}

The metric $ah^2Q+bP$ is closely related to $\varphi_{x\eta}$ along the flow: condition (iii) in Definition \ref{phase function of class Fh} translates, in geometric terms, into the statement that the intersection of null spaces of $Q$ and $P$ is the zero subspace.
The weight $w$ becoming degenerate in the case of a real-valued phase function corresponds, in this geometric picture, to $Q$ and $P$ separately not being metrics. We will show this below for the case of the 2-sphere, as an explicit example.

Before moving to that, let us make the aforementioned relation between $Q$, $P$ on the one hand and $\varphi_{x\eta}$ on the other mathematically precise.

\begin{theorem}\label{theorem geometric characterisation phixeta}
We have
\begin{equation}\label{relation between Phixeta and family of metrics}
\left.\varphi_{x^\alpha\eta_\mu}\right|_{x=x^*}=g_{\alpha\beta}(x^*)\left[ p^{\beta\mu}- \ir\,\epsilon\,h\,q^{\beta\mu} \right].
\end{equation}
\end{theorem}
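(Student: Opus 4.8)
The plan is to compare the formula for $\left.\varphi_{x^\alpha\eta_\beta}\right|_{x=x^*}$ established in Theorem~\ref{theorem phixeta at x=xstar} with the definitions of $q^{\alpha\mu}$ in \eqref{q} and $p^{\alpha\mu}$ in \eqref{p}. Recall that \eqref{phixeta at x=xstar} reads
\[
\left.\varphi_{x^\alpha\eta_\mu}\right|_{x=x^*}=
\dfrac{\partial \xi^*_\alpha}{\partial \eta_\mu}
- \Gamma^\rho{}_{\alpha\beta}(x^*)\,\xi^*_\rho\,\dfrac{\partial x^*{}^\beta}{\partial \eta_\mu}
- \ir\,\epsilon\,h(y,\eta)\,g_{\alpha\beta}(x^*)\,\dfrac{\partial x^*{}^\beta}{\partial \eta_\mu}\,.
\]
First I would observe that the last summand is exactly $-\ir\,\epsilon\,h\,g_{\alpha\beta}(x^*)\,q^{\beta\mu}$ by the very definition \eqref{q}, which already matches the imaginary part of the claimed formula \eqref{relation between Phixeta and family of metrics}.

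The remaining step is to identify the first two (real) summands with $g_{\alpha\beta}(x^*)\,p^{\beta\mu}$. By the definition \eqref{p},
\[
g_{\alpha\beta}(x^*)\,p^{\beta\mu}
=
g_{\alpha\beta}(x^*)\,g^{\beta\gamma}(x^*)
\left[(\xi^*_\gamma)_{\eta_\mu}- \Gamma^\rho{}_{\gamma\nu}(x^*)\,\xi^*_\rho\,(x^*)^\nu_{\eta_\mu}\right]
=
(\xi^*_\alpha)_{\eta_\mu}- \Gamma^\rho{}_{\alpha\nu}(x^*)\,\xi^*_\rho\,(x^*)^\nu_{\eta_\mu}\,,
\]
since $g_{\alpha\beta}g^{\beta\gamma}=\delta_\alpha{}^\gamma$. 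This coincides precisely with the first two terms of \eqref{phixeta at x=xstar}. Combining the two observations gives
\[
\left.\varphi_{x^\alpha\eta_\mu}\right|_{x=x^*}
=
g_{\alpha\beta}(x^*)\,p^{\beta\mu}
-
\ir\,\epsilon\,h\,g_{\alpha\beta}(x^*)\,q^{\beta\mu}
=
g_{\alpha\beta}(x^*)\left[p^{\beta\mu}-\ir\,\epsilon\,h\,q^{\beta\mu}\right]\,,
\]
which is \eqref{relation between Phixeta and family of metrics}.

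The proof is thus essentially a bookkeeping exercise: the content is already contained in Theorem~\ref{theorem phixeta at x=xstar} together with the definitions of $p$ and $q$, and the only thing to check is that the index contractions match up, in particular that raising and lowering with $g(x^*)$ cancel in the term defining $p$. There is no genuine obstacle; the one point deserving a word of care is that the Christoffel-symbol term in \eqref{p} appears with a lower index $\gamma$ contracted against $g^{\alpha\gamma}$, so one must verify that after multiplying by $g_{\alpha\beta}(x^*)$ the free index ends up in the position $\alpha$ matching the left-hand side of \eqref{phixeta at x=xstar}, which it does. One may also remark that, as a sanity check, the real part of \eqref{relation between Phixeta and family of metrics} recovers the relation between $\varphi_{x\eta}$ and the momentum form $P$ used implicitly in the discussion preceding the statement, while the full formula makes precise the assertion that condition (iii) of Definition~\ref{phase function of class Fh} is equivalent to the null spaces of $Q$ and $P$ intersecting trivially.
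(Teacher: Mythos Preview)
Your proof is correct and follows exactly the paper's own approach: the paper simply states that the identity is established by comparing \eqref{phixeta at x=xstar} with \eqref{q} and \eqref{p}, and you have written out precisely this comparison in detail.
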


\begin{proof}[Proof of Theorem \ref{theorem geometric characterisation phixeta}]
The identity \eqref{relation between Phixeta and family of metrics} is established by comparing \eqref{phixeta at x=xstar} with \eqref{q} and \eqref{p}.
\end{proof}

\begin{example}[Position and momentum forms for $\mathbb{S}^2$]
With the notation of Section~\ref{Explicit examples}, the quantities $q$ and $p$
defined by formulae
\eqref{q} and \eqref{p}
read
\begin{equation*}
\begin{aligned}
q^{\alpha\mu}&=\dfrac{2\, \tan(t/2)}{\|\eta\|^3}\begin{pmatrix}
\eta_2^2 & -\eta_1 \eta_2\\
-\eta_1\eta_2 & \eta_1^2
\end{pmatrix},
\\
p^{\alpha\mu}&=\dfrac{1}{\cos^2(t/2)\,\|\eta\|^2}\begin{pmatrix}
\eta_1^2+\eta_2^2\cos(t) &\eta_1\eta_2(1-\cos(t))\\
\eta_1\eta_2(1-\cos(t)) &\eta_2^2+\eta_1^2\cos(t)
\end{pmatrix}.
\end{aligned}
\end{equation*}
Consequently, the position and momentum forms are given by
\[
\begin{split}
Q_{\mu\nu}&=\dfrac{\sin^2(t)}{\| \eta \| ^4}
\begin{pmatrix}
 \eta_2^2&- \eta_1\,\eta_2\\
-\eta_1\,\eta_2& \eta_1^2
\end{pmatrix},
\\
P_{\mu\nu}&=\dfrac{1}{\| \eta \|^2}
\begin{pmatrix}
\eta_1^2+\eta_2^2\,\cos^2(t)& \eta_1\,\eta_2\,\sin^2(t)\\
\eta_1\,\eta_2\,\sin^2(t)& \eta_2^2+\eta_1^2\,\cos^2(t)
\end{pmatrix}.
\end{split}
\]

We have $\det Q=0$ and $\det P=\cos^2(t)$.
This implies that $P$, which is associated with the real part of \eqref{phixeta at x=xstar}
via \eqref{relation between Phixeta and family of metrics}
and \eqref{momentum form},
becomes degenerate for $t=\pi/2$. However, for the full metric $h^2Q+P$ we have 
in chosen local coordinates $h^2Q_{\mu\nu}+P_{\mu\nu}=\delta_{\mu\nu}\,$, so that
that the full metric $h^2Q+P$ is non-degenerate for all $t\in\mathbb{R}$.
This example
is remarkable in that the metric
\eqref{formula with a and b}
with $a=b=1$
does not depend on $t$.
\end{example}

\section*{Acknowledgements}
\addcontentsline{toc}{section}{Acknowledgements}

We are grateful to Jeff Galkowski and Valery Smyshlyaev for stimulating discussions and to Steve Zelditch for valuable comments. We would also like to thank the anonymous referees for a number of useful suggestions. DV was supported by EPSRC grant EP/M000079/1.

\begin{appendices}

\section{The amplitude-to-symbol operator}
\label{The amplitude-to-symbol operator}

In this appendix we will provide mathematical proofs and rigorous justification to the amplitude reduction algorithm described in Section~\ref{The global invariant symbol of the propagator}, developing ideas outlined in \cite{SaVa}.

With the notation established throughout the paper, let
$a\in S^m_{\mathrm{ph}}(\mathbb{R}\times M \times T'M)$
be a poly\-homogeneous function of order $m$,
\[
a\sim \sum_{k=0}^\infty a_{m-k}\,.
\]
Consider the oscillatory integral
\begin{equation}\label{oscillatory_w_amplitude}
\mathcal{I}_\varphi(a)=\int_{T^*_yM} \er^{\ir \varphi(t,x;y,\eta)} \, a(t,x;y,\eta)\, w(t,x;y,\eta) \,\dbar\eta\,,
\end{equation}
where $\varphi$ is any phase function of class $\mathcal{L}_h$.
For the sake of clarity, we drop here the dependence of functions on extra parameters (e.g.~$\epsilon$).

It is a well known fact that, modulo an infinitely smooth contribution,
\begin{equation}\label{oscillatory_w_symbol}
\mathcal{I}_\varphi(a)\overset{\text{mod}\,C^\infty}{=}\int_{T^*_yM} \er^{\ir \varphi(t,x;y,\eta)} \, \mathfrak{a}(t;y,\eta)\, w(t,x;y,\eta) \,\dbar\eta,
\end{equation}
for some $\mathfrak{a}=\mathfrak{a}(t;y,\eta)$. We call the $a$ in \eqref{oscillatory_w_amplitude}  \emph{amplitude} and
the $\mathfrak{a}$ in \eqref{oscillatory_w_symbol} \emph{symbol}. 

In this framework, one can construct an amplitude-to-symbol operator
\begin{equation*}
\mathfrak{S}: a\mapsto \mathfrak{a}.
\end{equation*}
The aim of this appendix is to write down the operator $\mathfrak{S}$ explicitly.

\begin{theorem}\label{main_proposition}
The amplitude-to-symbol operator $\mathfrak{S}$ reads
\begin{equation}\label{mathfrak_s}
\mathfrak{S}\sim \sum_{k=0}^\infty \mathfrak{S}_{-k},
\end{equation}
where
\begin{gather}
\mathfrak{S}_0=\left.\left( \,\cdot\, \right)\right|_{x=x^*}, \label{mathfrak_s0}\\
\mathfrak{S}_{-k}=\mathfrak{S}_0 \left[\ir\, w^{-1} \frac{\partial}{\partial \eta_\beta}\, w \left( 1+ \sum_{1\leq |\boldsymbol{\alpha}|\leq 2k-1} \dfrac{(-\varphi_\eta)^{\boldsymbol{\alpha}}}{\boldsymbol{\alpha}!\,(|\boldsymbol{\alpha}|+1)}\,L_{\boldsymbol{\alpha}} \right)  L_\beta  \right]^k\label{mathfrak_sk}
\end{gather}
with $L_\alpha:=[(\varphi_{x\eta})^{-1}]_\alpha{}^\beta \dfrac{\partial}{\partial x^\beta}$.
\end{theorem}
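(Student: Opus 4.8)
\emph{Proof plan.} The plan is to establish the reduction identity $\mathcal{I}_\varphi(a)\overset{\mod C^\infty}{=}\mathcal{I}_\varphi(\mathfrak{S}a)$ for an arbitrary amplitude $a\in S^m_{\mathrm{ph}}$ and to read off the operator $\mathfrak{S}$ in the course of the argument, making rigorous the scheme outlined in \cite{SaVa}. Two facts will be used throughout. First, since $\varphi$ is of class $\mathcal{L}_h$ one has $\left.\varphi_\eta\right|_{x=x^*}=0$: differentiating condition (i) of Definition~\ref{phase function of class Fh} in $\eta$ and using condition (ii) gives $\left.\varphi_{\eta_\beta}\right|_{x=x^*}=-\xi^*_\alpha\,\partial x^{*\alpha}/\partial\eta_\beta$, and this vanishes because the cogeodesic flow of the degree-one homogeneous Hamiltonian $h$ preserves the tautological one-form $\xi\,\dr x$, so $\xi^*_\alpha\,\partial x^{*\alpha}/\partial\eta_\beta=0$; together with condition (iii) this shows $\mathfrak{C}_\varphi=\{\varphi_\eta=0\}$ coincides, near the flow, with $\{x=x^*\}$. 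Second, by a standard non-stationary phase argument in $\eta$ (using $\operatorname{Im}\varphi\ge0$ to keep $\er^{\ir\varphi}$ bounded, so that away from $\{x=x^*\}$ one has either $\operatorname{Im}\varphi>0$ or $\varphi_\eta\ne0$), the integral \eqref{oscillatory_w_amplitude} is, modulo $C^\infty$, unchanged if we multiply $a$ by a cut-off supported in an arbitrarily small conical neighbourhood of $\mathfrak{C}_\varphi$. Hence we may freely Taylor-expand $a$ about $x=x^*$.

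The key algebraic observation is that $L_\alpha\varphi_{\eta_\gamma}=\delta_\alpha{}^\gamma$, immediate from \eqref{operator L}; equivalently, for fixed $(t,y,\eta)$ the map $x\mapsto\varphi_\eta(t,x;y,\eta)$ is a local diffeomorphism onto a neighbourhood of the origin (with $x=x^*$ corresponding to $0$), and in this chart $L_\alpha=\partial/\partial(\varphi_\eta)_\alpha$; here one invokes Lemma~\ref{lem:commutativity_Ls} so that the multi-index operators $L_{\boldsymbol\alpha}$ are well defined. Parametrising the segment $s\mapsto x(s)$ by $\varphi_\eta(t,x(s);y,\eta)=s\,\varphi_\eta(t,x;y,\eta)$, with $x(0)=x^*$ and $x(1)=x$, the fundamental theorem of calculus gives $f(x)-f(x^*)=(\varphi_\eta)_\gamma\int_0^1(L_\gamma f)(x(s))\,\dr s$ for any smooth $f$, and Taylor-expanding $(L_\gamma f)(x(s))$ at $x=x^*$ together with $\int_0^1 s^{|\boldsymbol\alpha|}\,\dr s=1/(|\boldsymbol\alpha|+1)$ produces exactly the combination involving the coefficients $1/(\boldsymbol\alpha!\,(|\boldsymbol\alpha|+1))$. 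Applying this to $f=a$, every resulting summand carries at least one factor $(\varphi_\eta)_\gamma$, which is converted into an $\eta$-derivative via $\varphi_{\eta_\gamma}\,\er^{\ir\varphi}=-\ir\,\partial_{\eta_\gamma}\er^{\ir\varphi}$ and integrated by parts; the derivative landing on the weight $w$ generates the factor $w^{-1}\partial_{\eta_\beta}w$, while the remaining $(\varphi_\eta)^{\boldsymbol\alpha}$ and extra $L$-derivatives regenerate an amplitude of the same shape but one degree lower in homogeneity. This is the one-step reduction
\[
\mathcal{I}_\varphi(a)\overset{\mod C^\infty}{=}\mathcal{I}_\varphi\!\left(\left.a\right|_{x=x^*}+\mathcal{R}\,a\right),\qquad
\mathcal{R}:=\ir\,w^{-1}\frac{\partial}{\partial\eta_\beta}\,w\left(1+\sum_{|\boldsymbol\alpha|\ge1}\frac{(-\varphi_\eta)^{\boldsymbol\alpha}}{\boldsymbol\alpha!\,(|\boldsymbol\alpha|+1)}\,L_{\boldsymbol\alpha}\right)L_\beta,
\]
with $\mathcal{R}$ mapping $S^m$ into $S^{m-1}$.

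Iterating the one-step reduction and collecting by degree of homogeneity yields $\mathfrak{S}\sim\sum_{k\ge0}\mathfrak{S}_0\,\mathcal{R}^k$, which is precisely \eqref{mathfrak_s}--\eqref{mathfrak_sk} once one checks that, when computing the degree-$(-k)$ component $\mathfrak{S}_{-k}=\mathfrak{S}_0\,\mathcal{R}^k$, all contributions in which some factor $L_{\boldsymbol\alpha}$ has $|\boldsymbol\alpha|\ge2k$ drop out: any surviving $(-\varphi_\eta)^{\boldsymbol\alpha}$ must be fully differentiated before the final restriction to $x=x^*$ (otherwise it vanishes there), and a count of how many $L$-derivatives the $k$ factors can supply shows that the truncation at $|\boldsymbol\alpha|\le 2k-1$ is lossless. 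The passage to an honest asymptotic series is then routine (the series $\sum_k\mathfrak{S}_{-k}a$ is asymptotic because $\mathcal{R}$ lowers the order, and an oscillatory integral whose amplitude lies in $\bigcap_m S^m$ is smooth).

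The main obstacle is the bookkeeping in the second and third steps: one must verify that the iterated integration by parts --- in which $\partial_{\eta_\gamma}$ also acts on the surviving $(\varphi_\eta)$-factors and on the already-restricted objects $x^*(\eta)$, $\xi^*(\eta)$ entering $L_{\boldsymbol\alpha}L_\gamma a|_{x=x^*}$ --- reassembles exactly into the nested form $\mathfrak{S}_0[\cdots]^k$ with the stated coefficients, and that no cross-terms of the wrong homogeneity survive. A secondary, more technical point, which we dispatch by citing the corresponding statements in \cite{LSV,SaVa}, is the rigorous justification of the localisation near $\mathfrak{C}_\varphi$ and of the integrations by parts at the ends of the $\eta$-range (the zero section and $|\eta|\to\infty$) for a genuinely complex-valued phase with $\operatorname{Im}\varphi$ allowed to vanish on $\mathfrak{C}_\varphi$.
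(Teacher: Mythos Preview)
Your overall strategy---pass to the local coordinates $u=\varphi_\eta$ (in which $L_\alpha=\partial/\partial u_\alpha$), write $a-a^*$ as an integral along the segment $u(s)=s\,\varphi_\eta$, Taylor-expand the integrand, convert one factor $\varphi_{\eta_\gamma}$ into an $\eta$-derivative and integrate by parts, then iterate---is sound and, once executed correctly, gives a more direct route than the paper's. The paper instead introduces the operators $F_k=\sum_{|\boldsymbol\alpha|=k}\frac{(\varphi_\eta)^{\boldsymbol\alpha}}{\boldsymbol\alpha!}L_{\boldsymbol\alpha}$, establishes the recursion $(k{+}1)F_{k+1}=F_1F_k-kF_k$ and the eigenvalue relation $F_k(\varphi_\eta)^{\boldsymbol\alpha}=\binom{|\boldsymbol\alpha|}{k}(\varphi_\eta)^{\boldsymbol\alpha}$, and combines these with the binomial identity $\sum_k(-1)^k\binom{n}{k}=0$ to obtain the factorisation $a\simeq a^*+\varphi_{\eta_\gamma}\sum_{|\boldsymbol\alpha|\ge0}\frac{(-\varphi_\eta)^{\boldsymbol\alpha}}{\boldsymbol\alpha!\,(|\boldsymbol\alpha|+1)}L_{\boldsymbol\alpha}L_\gamma a$. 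Your integral argument is aimed at the same formula but bypasses this operator algebra.

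There is, however, a concrete error in the execution that prevents the argument, as written, from producing the claimed $\mathcal{R}$. You expand $(L_\gamma f)(x(s))$ about $s=0$ (i.e.\ at $x=x^*$) and invoke $\int_0^1 s^{|\boldsymbol\alpha|}\,\dr s=1/(|\boldsymbol\alpha|+1)$. That yields
\[
a-a^*\ \simeq\ (\varphi_\eta)_\gamma\sum_{|\boldsymbol\alpha|\ge0}\frac{(+\varphi_\eta)^{\boldsymbol\alpha}}{\boldsymbol\alpha!\,(|\boldsymbol\alpha|+1)}\,\bigl(L_{\boldsymbol\alpha}L_\gamma a\bigr)\bigr|_{x=x^*},
\]
with the \emph{wrong sign} and, more importantly, with coefficients $\bigl(L_{\boldsymbol\alpha}L_\gamma a\bigr)|_{x=x^*}$ that are \emph{independent of $x$}. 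After one integration by parts the resulting ``new amplitude'' is therefore not $\mathcal{R}a$ in your sense, and the iteration does not nest as $\mathfrak{S}_0[\cdots]^k$: further $L$-derivatives applied to those $x$-independent coefficients vanish, so the structure of \eqref{mathfrak_sk} is lost. The fix is to expand the integrand about $s=1$ (i.e.\ at the endpoint $x$), using $\int_0^1(s-1)^{|\boldsymbol\alpha|}\,\dr s=(-1)^{|\boldsymbol\alpha|}/(|\boldsymbol\alpha|+1)$; this produces exactly
\[
\tilde b_\gamma=\sum_{|\boldsymbol\alpha|\ge0}\frac{(-\varphi_\eta)^{\boldsymbol\alpha}}{\boldsymbol\alpha!\,(|\boldsymbol\alpha|+1)}\,L_{\boldsymbol\alpha}L_\gamma a
\]
with $L_{\boldsymbol\alpha}L_\gamma a$ still a function of $x$, and then $\mathcal{R}$ is a genuine operator on $x$-dependent amplitudes whose $k$-fold composition, followed by $\mathfrak{S}_0$, gives \eqref{mathfrak_sk}. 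With this single correction your argument goes through, and the truncation at $|\boldsymbol\alpha|\le 2k-1$ follows by the counting you indicate.
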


We begin with two general comments regarding our phase function, which follow from the properties in Definition~\ref{phase function of class Fh}. Firstly, as already observed, $\varphi_\eta(t,x^*;y,\eta)=0$. Secondly, one can always assume that $\det(\varphi_{x^\alpha\eta_\beta})\ne 0 $ on $\operatorname{supp} a$. If this is not the case, it is enough to multiply $a$ by a smooth cut-off $\chi$ supported in a neighbourhood of
\[
\mathfrak{C}=\{(t,x;y,\eta)\,\,|\,\,x=x^*(t;y,\eta) \} \subset \mathbb{R}\times M \times T'M
\]
small enough. The oscillatory integrals $\mathcal{I}_\varphi(a)$ and $\mathcal{I}_\varphi(\chi \,a)$ differ by infinitely smooth contributions.

The idea of the proof, at times quite technical, goes as follows. Expand the amplitude $a$ in power series in $x$ about $x=x^*$. With the notation $a^*=a|_{x=x^*}$, we have
\begin{equation}\label{a_expansion}
a=a^*+ (x-x^*)^\alpha\, b_\alpha
\end{equation}
for some covector $b=b(t,x;y,\eta)$. Plugging \eqref{a_expansion} into \eqref{oscillatory_w_amplitude}, we obtain
\begin{equation}\label{amplitude reduction euristics}
\begin{split}
\mathcal{I}_\varphi(a)&=\int_{T'_yM}  \er^{\ir \varphi} \, a^*\, w \,\dbar\eta + \int_{T'_yM}  \er^{\ir \varphi} \,(x-x^*)^\alpha\, b_\alpha\, w \,\dbar\eta\\
&=\int_{T'_yM}  \er^{\ir \varphi} \, a^*\, w \,\dbar\eta + \int_{T'_yM} 
\er^{\ir \varphi} \,\varphi_{\eta_\alpha}\, \tilde b_\alpha\, w \,\dbar\eta\\
&=\int_{T'_yM}  \er^{\ir \varphi} \, a^*\, w \,\dbar\eta +\int_{T'_yM}  \frac1\ir
\left(\dfrac{\partial}{\partial \eta_\alpha}  \er^{\ir \varphi}\right)
\tilde b_\alpha\, w \,\dbar\eta\\
&=\int_{T'_yM}  \er^{\ir \varphi} \, a^*\, w \,\dbar\eta +\int_{T'_yM}  \er^{\ir \varphi} \, \ir\,w^{-1} 
 \left( \dfrac{\partial}{\partial \eta_\alpha}  \tilde b_\alpha\,w  \right)
w \,\dbar\eta\,,
\end{split}
\end{equation}
where the covector $\tilde{b}$ can be written down explicitly in terms of $b$ and $\varphi$.
It is easy to see that
\[  
 \ir\,w^{-1}  \left(  \dfrac{\partial}{\partial \eta_\alpha} \tilde b_\alpha\,w \right) \in
 S^{m-1}_{\mathrm{ph}}(\mathbb{R}\times M \times T'M).
\]
The first integral on the RHS of \eqref{amplitude reduction euristics} has amplitude independent of $x$, whereas the second one has amplitude whose order is decreased by one. Repeating the above argument, we can recursively reduce the order and eventually obtain an oscillatory integral with $x$-independent amplitude
\[
\mathfrak{a} \sim \sum_{k=0}^\infty \mathfrak{a}_{m-k}, \quad \mathfrak{a}_{m-k} \in
 S^{m-k}_{\mathrm{ph}}(\mathbb{R}\times T'M),
\]
plus an oscillatory integral with amplitude in $ S^{-\infty}(\mathbb{R}\times M \times T'M)$.

Note that the $b$ and $\tilde b$ in the above argument are both covectors but in a different sense:
$b_\alpha$ behaves as a covector under changes of local coordinates $x$, whereas
$\tilde b_\alpha$ behaves as a covector under changes of local coordinates $y$.

The actual proof relies on a more sophisticated argument, which allows one to explicitly and constructively compute $\mathfrak{a}$. The whole idea, rooted in a version of the Malgrange preparation theorem, is to factor out $\varphi_{\eta_\alpha}$ rather than simply $(x-x^*)^\alpha$ in equation~\eqref{a_expansion}. Such factorisation will be eventually achieved in formula \eqref{referee 3}. A crucial point worth stressing is that the whole construction is global and covariant.

Before addressing the proof
of Theorem~\ref{main_proposition}
we need to state and prove a preparatory lemma.

\begin{lemma}\label{lem:commutativity_Ls}
The operators
\begin{equation}\label{operator_L}
L_\alpha= (\varphi_{x\eta}^{-1})_\alpha{}^\gamma \,\dfrac{\partial}{\partial x^\gamma}
\end{equation}
commute. Namely, for all $\,\alpha, \beta=1,\ldots,d\,$ we have
\begin{equation}\label{L_commute}
[L_\alpha,L_\beta]=0.
\end{equation}
\end{lemma}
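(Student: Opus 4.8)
The plan is to compute the commutator directly and reduce everything to the symmetry of the third-order partial derivatives of $\varphi$. Writing $M_\alpha{}^\beta:=[(\varphi_{x\eta})^{-1}]_\alpha{}^\beta$, so that $L_\alpha=M_\alpha{}^\beta\,\partial/\partial x^\beta$, I would first apply $[L_\alpha,L_\gamma]$ to a smooth test function. The purely second-order terms cancel because mixed partial derivatives in $x$ commute, and after relabelling dummy indices one is left with the first-order operator
\[
[L_\alpha,L_\gamma]=\bigl(M_\alpha{}^\beta\,(\partial_{x^\beta}M_\gamma{}^\delta)-M_\gamma{}^\beta\,(\partial_{x^\beta}M_\alpha{}^\delta)\bigr)\,\dfrac{\partial}{\partial x^\delta}\,.
\]
Hence the lemma reduces to showing that $X_{\alpha\gamma}{}^\delta:=M_\alpha{}^\beta\,(\partial_{x^\beta}M_\gamma{}^\delta)$ is symmetric under $\alpha\leftrightarrow\gamma$.

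The key point I would use is that the entries of $\varphi_{x\eta}$ are $\varphi_{x^\mu\eta_\nu}=\partial_{x^\mu}\partial_{\eta_\nu}\varphi$, so that, $\varphi$ being infinitely smooth, the third derivative $\partial_{x^\beta}\varphi_{x^\mu\eta_\nu}=\partial_{x^\beta}\partial_{x^\mu}\partial_{\eta_\nu}\varphi$ is symmetric in $\beta$ and $\mu$ by equality of mixed partial derivatives. Combining this with the standard identity $\partial_{x^\beta}M_\gamma{}^\delta=-M_\gamma{}^\mu\,(\partial_{x^\beta}\varphi_{x^\mu\eta_\nu})\,M_\nu{}^\delta$ for the derivative of a matrix inverse gives
\[
X_{\alpha\gamma}{}^\delta=-M_\alpha{}^\beta\,M_\gamma{}^\mu\,(\partial_{x^\beta}\varphi_{x^\mu\eta_\nu})\,M_\nu{}^\delta
=-M_\alpha{}^\beta\,M_\gamma{}^\mu\,(\partial_{x^\mu}\varphi_{x^\beta\eta_\nu})\,M_\nu{}^\delta\,,
\]
and renaming the dummy indices $\beta\leftrightarrow\mu$ in the right-hand side exhibits $X_{\alpha\gamma}{}^\delta=X_{\gamma\alpha}{}^\delta$, so that $[L_\alpha,L_\gamma]=0$. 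Throughout I would keep in mind that, as noted before the lemma, one may assume $\det(\varphi_{x^\alpha\eta_\beta})\ne0$ on the relevant set, so that $M$ and the $L_\alpha$ are well defined there.

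I do not expect a genuine obstacle: the only things requiring a little care are the index bookkeeping and the identification of which mixed partials are symmetric. A useful conceptual check, which I would add as a remark, is that the one-forms $\omega^\delta:=\varphi_{x^\alpha\eta_\delta}\,\dr x^\alpha=\dr_x(\varphi_{\eta_\delta})$ are exact, that $\{L_\alpha\}$ is precisely the frame dual to the coframe $\{\omega^\delta\}$, and that wherever $\det\varphi_{x\eta}\ne0$ the (possibly complex-valued) functions $\varphi_{\eta_1},\dots,\varphi_{\eta_d}$ play the role of coordinates on $M$ in which $L_\alpha=\partial/\partial\varphi_{\eta_\alpha}$; coordinate vector fields commute. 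The direct computation above is the coordinate-free shadow of this observation and sidesteps the mild subtlety of having to make sense of complex coordinates.
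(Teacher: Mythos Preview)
Your proof is correct and follows essentially the same approach as the paper: both compute the commutator directly and reduce it to the symmetry of the third mixed partial $\varphi_{x^\beta x^\mu\eta_\nu}$ in $\beta,\mu$. The only organisational difference is that the paper contracts the commutator coefficient with two copies of $\varphi_{x\eta}$ and then invokes non-degeneracy at the end, whereas you apply the derivative-of-inverse identity directly; your route is a touch more streamlined, and your closing remark about $\{L_\alpha\}$ being the frame dual to the exact coframe $\dr_x(\varphi_{\eta_\delta})$ is a nice conceptual gloss that the paper does not include.
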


\begin{proof}
We have
\begin{equation*}
\begin{split}
L_\alpha L_\beta - L_\beta L_\alpha
&=(\varphi_{x\eta}^{-1})_\alpha{}^\mu \,\dfrac{\partial}{\partial x^\mu} (\varphi_{x\eta}^{-1})_\beta{}^\nu \,\dfrac{\partial}{\partial x^\nu}
-(\varphi_{x\eta}^{-1})_\beta{}^\nu \,\dfrac{\partial}{\partial x^\nu}(\varphi_{x\eta}^{-1})_\alpha{}^\mu \,\dfrac{\partial}{\partial x^\mu}
\\
&=
\left((\varphi_{x\eta}^{-1})_\alpha{}^\mu [(\varphi_{x\eta}^{-1})_\beta{}^\nu]_{x^\mu} \right)\dfrac{\partial}{\partial x^\nu}
- 
\left( (\varphi_{x\eta}^{-1})_\beta{}^\nu [(\varphi_{x\eta}^{-1})_\alpha{}^\mu]_{x^\nu} \right)\dfrac{\partial}{\partial x^\mu}
\\
&=
\left(
(\varphi_{x\eta}^{-1})_\alpha{}^\nu [(\varphi_{x\eta}^{-1})_\beta{}^\mu]_{x^\nu}
-
(\varphi_{x\eta}^{-1})_\beta{}^\nu [(\varphi_{x\eta}^{-1})_\alpha{}^\mu]_{x^\nu}
\right)
\dfrac{\partial}{\partial x^\mu}\,.
\end{split}
\end{equation*}
Contracting with $(\varphi_{x\eta})_\gamma{}^\alpha\, (\varphi_{x\eta})_\rho{}^\beta$, we get
\begin{equation}\label{L_commute_proof}
\begin{split}
(\varphi_{x\eta})_\gamma{}^\alpha\, (\varphi_{x\eta})_\rho{}^\beta\,[L_\alpha,L_\beta] 
&= \left(
(\varphi_{x\eta})_\rho{}^\beta[(\varphi_{x\eta}^{-1})_\beta{}^\mu]_{x^\gamma}
-
 (\varphi_{x\eta})_\gamma{}^\alpha \,[(\varphi_{x\eta}^{-1})_\alpha{}^\mu]_{x^\rho}
\right)
\dfrac{\partial}{\partial x^\mu}
\\
&= \left(
- 
[(\varphi_{x\eta})_\rho{}^\beta]_{x^\gamma}(\varphi_{x\eta}^{-1})_\beta{}^\mu
+ 
[(\varphi_{x\eta})_\gamma{}^\alpha]_{x^\rho} \,(\varphi_{x\eta}^{-1})_\alpha{}^\mu
\right)
\dfrac{\partial}{\partial x^\mu}
\\
&= \left(
-
\varphi_{x^\rho x^\gamma \eta_\beta}\, (\varphi_{x\eta}^{-1})_\beta{}^\mu
+
\varphi_{x^\gamma x^\rho \eta_\alpha} \,(\varphi_{x\eta}^{-1})_\alpha{}^\mu
\right)
\dfrac{\partial}{\partial x^\mu}
\\
&= \left(
-\varphi_{x^\rho x^\gamma \eta_\alpha}\, (\varphi_{x\eta}^{-1})_\alpha{}^\mu
+
\varphi_{x^\gamma x^\rho \eta_\alpha} \,(\varphi_{x\eta}^{-1})_\alpha{}^\mu
\right)
\dfrac{\partial}{\partial x^\mu}
\\
&=0.
\end{split}
\end{equation}
Since $\varphi_{x\eta}$ is non-degenerate, \eqref{L_commute_proof} is equivalent to \eqref{L_commute}.
\end{proof}

We are now in a position to prove Theorem \ref{main_proposition}.

\begin{proof}[Proof of Theorem \ref{main_proposition}]
The first step is to show that it is possible to write, modulo $O(\|x-x^*\|^\infty)$,  the amplitude $a$ as
\begin{equation}
\label{first step}
a(t,x;y,\eta)=a(t,x^*(t;y,\eta);y,\eta) + \varphi_{\eta_\alpha}(t,x;y,\eta)\, \tilde b_\alpha(t,x; y,\eta)
\end{equation}
for some $\tilde b$.

In order to write down explicitly the $\tilde b$ appearing in formula \eqref{first step},
let us introduce the operators
\begin{equation}\label{operator_F0}
F_0:=1,
\end{equation}
\begin{equation}\label{operator_Fk}
F_k:=\sum_{|\boldsymbol{\alpha}|=k} \dfrac{(\varphi_\eta)^{\boldsymbol{\alpha}}}{\boldsymbol{\alpha}!} L_{\boldsymbol{\alpha}}\,,
\end{equation}
where ${\boldsymbol{\alpha}}=(\alpha_1,\alpha_2,\ldots,\alpha_d)\in \mathbb{N}_0^d$ is a multi-index, 
${\boldsymbol{\alpha}}!=\alpha_1!\,\alpha_2! \cdots \alpha_d!\,$, 
 $(\varphi_\eta)^{\boldsymbol{\alpha}}=(\varphi_{\eta_1})^{\alpha_1}(\varphi_{\eta_2})^{\alpha_2}\cdots(\varphi_{\eta_d})^{\alpha_d}$, 
$L_{\boldsymbol{\alpha}}=(L_1)^{\alpha_1}(L_2)^{\alpha_2}\cdots(L_d)^{\alpha_d}$.
In view of Lem\-ma~\ref{lem:commutativity_Ls}, $F_k$ is well defined and the order of the $L_\alpha$'s is irrelevant.
Note also that the coefficients $\frac1{\boldsymbol{\alpha}!}$ appearing in \eqref{operator_Fk}
are the ones from the algebraic multinomial expansion
\begin{equation}\label{algebraic multinomial expansion}
(z_1+\ldots+z_d)^k=
k!
\sum_{|\boldsymbol{\alpha}|=k}
\dfrac1{\boldsymbol{\alpha}!}\,z^{\boldsymbol{\alpha}},
\end{equation}
a generalisation of the binomial expansion.

Formulae \eqref{operator_Fk} and \eqref{algebraic multinomial expansion} imply
\begin{equation}\label{relation_Fk+1_Fk temp 1}
(k+1)F_{k+1}=\sum_{\gamma=1}^d \varphi_{\eta_\gamma}F _k \, L_\gamma\,.
\end{equation}
Furthermore, we have
\begin{equation}
\label{relation_Fk+1_Fk temp 2}
\begin{split}
F_1\, F_k- k\, F_k &= \left(\sum_{\gamma=1}^d \varphi_{\eta_\gamma} \,L_\gamma\right) F_k - k\,F_k\\
&= \sum_{\gamma,\mu=1}^d \varphi_{\eta_\gamma} (\varphi_{x\eta}^{-1})_\gamma{}^\mu \sum_{|{\boldsymbol{\alpha}}|=k} \dfrac{\left[(\varphi_\eta)^{\boldsymbol{\alpha}} \right]_{x^\mu}}{{\boldsymbol{\alpha}}!} L_{\boldsymbol{\alpha}}\\
&\qquad +\sum_{\gamma=1}^d \varphi_{\eta_\gamma}  \sum_{|{\boldsymbol{\alpha}}|=k} \dfrac{(\varphi_\eta)^{\boldsymbol{\alpha}}}{{\boldsymbol{\alpha}}!} L_\gamma L_{\boldsymbol{\alpha}} - k\, F_k\\
&= k\, F_k + \sum_{\gamma=1}^d \varphi_{\eta_\gamma}  \sum_{|{\boldsymbol{\alpha}}|=k} \dfrac{(\varphi_\eta)^{\boldsymbol{\alpha}}}{{\boldsymbol{\alpha}}!}L_{\boldsymbol{\alpha}} L_\gamma - k\, F_k\\
&=\sum_{\gamma=1}^d \varphi_{\eta_\gamma}F _k \, L_\gamma\,.
\end{split}
\end{equation}
Combining formulae
\eqref{relation_Fk+1_Fk temp 1}
and
\eqref{relation_Fk+1_Fk temp 2},
we arrive at a recurrent formula for our operators
$F_k\,$:
\begin{equation}\label{relation_Fk+1_Fk}
(k+1)F_{k+1}=F_1\, F_k- k\, F_k\,.
\end{equation}

It turns out that the functions $(\varphi_\eta)^{\boldsymbol{\alpha}}$
with $|{\boldsymbol{\alpha}}|\ge k$
are eigenfunctions of the
operators $F_k$. Namely, we have
\begin{equation}\label{homogeneous_property_s_r}
F_k (\varphi_\eta)^{\boldsymbol{\alpha}}
= 
\begin{cases}
\quad\,\,\,\,\,\, 0, \qquad\qquad |{\boldsymbol{\alpha}}|< k,\\
{|{\boldsymbol{\alpha}}|\choose k}
(\varphi_\eta)^{\boldsymbol{\alpha}},\qquad |{\boldsymbol{\alpha}}|\ge k.
\end{cases}
\end{equation}
Formula \eqref{homogeneous_property_s_r} can be proved by induction. It is clearly true for $k=0$. Let us assume it is true for $k=n$. Let us prove it for $k=n+1$.
If $|{\boldsymbol{\alpha}}|<n$, then the required result immediately follows from
formula \eqref{relation_Fk+1_Fk} and the inductive assumption.
If $|{\boldsymbol{\alpha}}|\ge n$, then
formula \eqref{relation_Fk+1_Fk} and the inductive assumption give us
\begin{equation*}
\begin{split}
F_{n+1}(\varphi_\eta)^{\boldsymbol{\alpha}}&= \frac{1}{n+1}
{|{\boldsymbol{\alpha}}|\choose n}
\left[
 F_1(\varphi_\eta)^{\boldsymbol{\alpha}}
-
n(\varphi_\eta)^{\boldsymbol{\alpha}}
\right]\\
&= \frac{1}{n+1}
{|{\boldsymbol{\alpha}}|\choose n}
\left[
|{\boldsymbol{\alpha}}|(\varphi_\eta)^{\boldsymbol{\alpha}}
-
n(\varphi_\eta)^{\boldsymbol{\alpha}}
\right]
\\
&= \frac{|{\boldsymbol{\alpha}}|-n}{n+1}
{|{\boldsymbol{\alpha}}|\choose n}
(\varphi_\eta)^{\boldsymbol{\alpha}}
=
\begin{cases}
\qquad 0,\qquad\qquad\,\, |{\boldsymbol{\alpha}}|=n,\\
{|{\boldsymbol{\alpha}}|\choose n+1}
(\varphi_\eta)^{\boldsymbol{\alpha}},\qquad |{\boldsymbol{\alpha}}|> n,
\end{cases}
\end{split}
\end{equation*}
as required.

Formula \eqref{homogeneous_property_s_r} is, effectively, a generalised version of
Euler's formula for homogeneous functions.

Given a multi-index ${\boldsymbol{\alpha}}\ne0$, we have the elementary identity
\[
0=(1-1)^{|{\boldsymbol{\alpha}}|}
=\sum_{k=0}^{|{\boldsymbol{\alpha}}|}
(-1)^k
{|{\boldsymbol{\alpha}}|\choose k}
=1+
\sum_{k=1}^{|{\boldsymbol{\alpha}}|}
(-1)^k
{|{\boldsymbol{\alpha}}|\choose k}.
\]
The above identity and formula \eqref{homogeneous_property_s_r}  imply
\begin{equation}
\label{appendix a dima 1}
(\varphi_\eta)^{\boldsymbol{\alpha}}
=
-
\left(
\sum_{k=1}^{|{\boldsymbol{\alpha}}|}
(-1)^kF_k
\right)
(\varphi_\eta)^{\boldsymbol{\alpha}}
=
-
\left(
\sum_{k=1}^\infty
(-1)^kF_k
\right)
(\varphi_\eta)^{\boldsymbol{\alpha}},
\qquad\forall{\boldsymbol{\alpha}}\ne0.
\end{equation}

Consider now a function $a(t,x;y,\eta)$.
It can be expanded into an asymptotic series in powers of $x-x^*$.
Observe that $\varphi_\eta$ can also be expanded into an asymptotic series in powers of $x-x^*$
and, furthermore, in view of Definition~\ref{phase function of class Fh} this series can be inverted,
giving an asymptotic expansion of  $x-x^*$ in powers of $\varphi_\eta\,$.
Consequently, the function $a(t,x;y,\eta)$
can be expanded into an asymptotic series in powers of $\varphi_\eta\,$.
The coefficients of the latter expansion are determined using the fact that
\[
\left.
\bigl[
L_{\boldsymbol{\alpha}}
(\varphi_\eta)^{\boldsymbol{\beta}}
\,\bigr]
\right|_{x=x^*}=
\begin{cases}
{\boldsymbol{\alpha}}!,\qquad{\boldsymbol{\alpha}}={\boldsymbol{\beta}},
\\
\,\,0,\qquad\,\,{\boldsymbol{\alpha}}\ne{\boldsymbol{\beta}}.
\end{cases}
\]
This gives us
\begin{equation}
\label{expansion for a in powers of varphi eta}
a\simeq\sum_{|{\boldsymbol{\alpha}}|\ge0}
 \dfrac{(\varphi_\eta)^{\boldsymbol{\alpha}}}{{\boldsymbol{\alpha}}!} \left.[L_{\boldsymbol{\alpha}}a]\right|_{x=x^*}.
\end{equation}
The symbol $\simeq$ in \eqref{expansion for a in powers of varphi eta} indicates that we are dealing with
an asymptotic expansion. Namely, it means that for any $r\in\mathbb{N}_0$ we have
\[
a-\sum_{0\le|{\boldsymbol{\alpha}}|\le r}
 \dfrac{(\varphi_\eta)^{\boldsymbol{\alpha}}}{{\boldsymbol{\alpha}}!} \left.[L_{\boldsymbol{\alpha}}a]\right|_{x=x^*}
=O\bigl(\|x-x^*\|^{r+1}\bigr).
\]

Formula \eqref{appendix a dima 1} allows us to rewrite the asymptotic expansion
\eqref{expansion for a in powers of varphi eta} as
\begin{equation}
\label{expansion for a in powers of varphi eta rewritten}
a\simeq a|_{x=x^*}-\sum_{k=1}^\infty (-1)^kF_k\,a.
\end{equation}
The advantage of
\eqref{expansion for a in powers of varphi eta rewritten}
over
\eqref{expansion for a in powers of varphi eta}
is that the restriction operator $\left.(\,\cdot\,)\right|_{x=x^*}$
appears only in one place, in the first term on the RHS of
\eqref{expansion for a in powers of varphi eta rewritten}.
Formula
\eqref{expansion for a in powers of varphi eta rewritten}
is a generalisation of the formula
\begin{equation}
\label{expansion for a in powers of varphi eta rewritten 1D}
a(x)\simeq
a(0)
+xa'(x)
-
\frac{x^2}2a''(x)
+
\frac{x^3}6a'''(x)
+\ldots
\end{equation}
from the analysis of functions of one variable.
Namely, formula
\eqref{expansion for a in powers of varphi eta rewritten}
turns into
\eqref{expansion for a in powers of varphi eta rewritten 1D}
if we set $d=1$ and choose a phase function $\varphi$ linear in $x$.

At this point it is worth discussing what happens under changes of local coordinates $x$.
Examination of formula \eqref{operator_L} shows that the operators $L_\alpha$
map scalar functions to scalar functions, i.e.~the map $a\mapsto L_\alpha a$
is invariant under changes of local coordinates $x$; note that the index $\alpha$
does not play a role in this argument as it lives at a different point, $y$, and in a different coordinate system.
As the operators $F_k$ are expressed in terms of the $L_\alpha\,$, the operator
$\sum_{k=1}^\infty (-1)^kF_k$ appearing on the RHS of formula
\eqref{expansion for a in powers of varphi eta rewritten}
also maps scalar functions to scalar functions.

Using formulae
\eqref{relation_Fk+1_Fk temp 1}
and
\eqref{operator_F0},
\eqref{operator_Fk},
we can rewrite
\eqref{expansion for a in powers of varphi eta rewritten}
as
\begin{equation}
\label{referee 3}
\begin{split}
a
&\simeq
a^*
-
\sum_{\gamma=1}^d \varphi_{\eta_\gamma} \sum_{k=1}^\infty \dfrac{(-1)^k}{k}\, F_{k-1}\,L_\gamma\, a
\\
&=
a^*
-
\sum_{\gamma=1}^d \varphi_{\eta_\gamma} \sum_{k=1}^\infty \dfrac{(-1)^k}{k}
\sum_{|{\boldsymbol{\alpha}}|=k-1} \dfrac{(\varphi_\eta)^{\boldsymbol{\alpha}}}{{\boldsymbol{\alpha}}!} \,L_{\boldsymbol{\alpha}}\,L_\gamma\, a
\\
&=
a^*
+
\sum_{\gamma=1}^d \varphi_{\eta_\gamma} \sum_{|{\boldsymbol{\alpha}}|\ge 0}
\dfrac{(-\varphi_\eta)^{\boldsymbol{\alpha}}}{{\boldsymbol{\alpha}}!\,(|{\boldsymbol{\alpha}}|+1)}
\,L_{\boldsymbol{\alpha}}\,L_\gamma\, a\,,
\end{split}
\end{equation}
where $a^*=a|_{x=x^*}$.
Thus, we have represented our amplitude in the form \eqref{first step} with
\begin{equation}
\label{formula for b tilde}
\tilde b_\gamma
\simeq
\sum_{|{\boldsymbol{\alpha}}|\ge 0}
\dfrac{(-\varphi_\eta)^{\boldsymbol{\alpha}}}{{\boldsymbol{\alpha}}!\,(|{\boldsymbol{\alpha}}|+1)}
\,L_{\boldsymbol{\alpha}}\,L_\gamma\, a\,.
\end{equation}

Combining \eqref{oscillatory_w_amplitude}
with \eqref{first step}
and  \eqref{formula for b tilde} and by using the identity
$$
\varphi_{\eta_\gamma}\er^{\ir\,\varphi}= \dfrac{1}{\ir} \dfrac{\partial }{\partial \eta_\gamma}\, \er^{\ir\varphi}
$$
we get, upon integration by parts,
\begin{equation*}
\mathcal{I}_\varphi(a)=\int_{T^*_yM} \er^{\ir\varphi} \, \left[ a^* +\ir \, w^{-1}  \frac{\partial}{\partial \eta_\gamma}\, \left( w   \sum_{|{\boldsymbol{\alpha}}|\ge 0} \dfrac{(-\varphi_\eta)^{\boldsymbol{\alpha}}}{{\boldsymbol{\alpha}}!\,(|{\boldsymbol{\alpha}}|+1)}\, L_{\boldsymbol{\alpha}}\right) L_\gamma \, a \right] w \, \dbar\eta\,.
\end{equation*}
Note that $a^*$ no longer depends on $x$ and the second contribution to the amplitude is now of order $m-1$. Recursive repetition of this procedure yields \eqref{mathfrak_s}--\eqref{mathfrak_sk}. The cut-off on the possible values of $|\boldsymbol{\alpha}|$ in \eqref{mathfrak_sk} follows from incorporating the information that $\varphi_\eta|_{x=x^*}=0$.
\end{proof}

\section{Weyl coefficients}
\label{Weyl coefficients}

 Let
\begin{equation*}
N(y;\lambda):= \sum_{\lambda_k<\lambda} |v_k(y)|^2
\end{equation*}
be the local counting function. When integrated over the manifold, $N(y;\lambda)$ turns into the usual (global) counting function
\begin{equation*}
N(\lambda):= \sum_{\lambda_k<\lambda} 1=\int_M N(y;\lambda)\,\rho(y)\,\dr y\,.
\end{equation*}

Let $\hat\mu:\mathbb{R}\to\mathbb{C}$ be a smooth function such that
$\hat\mu(t)=1$ in some neighbourhood of the origin
and the support of $\hat\mu$ is sufficiently small.
Here `sufficiently small' means that
$\operatorname{supp}\hat\mu\subset(-{T}_0,{T}_0)$,
where ${T}_0$ is the infimum of the lengths of all possible loops.
A loop is defined as follows.
Suppose that we have a Hamiltonian trajectory
$(x(t;y,\eta),\xi(t;y,\eta))$
and a real number $T>0$ such that
$x(T;y,\eta)=y$. We say in this case
that we have a loop of length $T$ originating
from the point $y\in M$.

We denote by
\[
\mathcal{F}[f](t)=\hat f(t)=\int_{-\infty}^{+\infty} \er^{-\ir t\lambda}\,f(\lambda)\,\dr\lambda
\]
the one-dimensional Fourier transform and by
\[
\mathcal{F}^{-1}[\hat f](\lambda)=f(\lambda)=\frac1{2\pi}\int_{-\infty}^{+\infty} \er^{\ir t\lambda}\,\hat f(t)\,\dr t
\]
its inverse.
Accordingly, we denote
$\mu:=\mathcal{F}^{-1}[\hat\mu]$.

Further on we will deal with the mollified counting function
$(N\,*\,\mu)(y,\lambda)$ rather than the original discontinuous
counting function $N(y,\lambda)$.
Here the star stands for convolution in the variable $\lambda$.
More specifically, we will deal with the derivative,
in the variable $\lambda$, of the mollified counting function.
The derivative will be indicated by a prime.

It is known
\cite{AFV,CDV,DuGu,Ivr80,Ivr84,Ivr98,SaVa}
that the function $(N'*\mu)(y,\lambda)$
admits an asymptotic expansion in integer powers of $\lambda\,$ as $\lambda\to+\infty$:
\begin{equation}
\label{expansion for mollified derivative of counting function}
(N'*\mu)(y,\lambda)=
c_{d-1}(y)\,\lambda^{d-1}
+
c_{d-2}(y)\,\lambda^{d-2}
+
c_{d-3}(y)\,\lambda^{d-3}
+\dots.
\end{equation}

\begin{definition}
\label{definition of Weyl coefficients}
We call the coefficients
$c_k(y)$ appearing in formula
\eqref{expansion for mollified derivative of counting function}
\emph{local Weyl coefficients}.
\end{definition}

Note that our definition of Weyl coefficients does not
depend on the choice of mollifier $\mu$.

Integrating \eqref{expansion for mollified derivative of counting function} in $\lambda$
and using the fact that $(N'*\mu)(y,\lambda)$ decays faster than any power of $\lambda$ as $\lambda\to-\infty$, we get
\begin{multline}
\label{expansion for mollified counting function}
(N*\mu)(y,\lambda)=
\frac{c_{d-1}(y)}{d}\,\lambda^{d}
\,+\,
\frac{c_{d-2}(y)}{d-1}\,\lambda^{d-1}
\,+\,
\dots
\,+\,
c_0(y)\,\lambda
\,+\,
\\
c_{-1}(y)\,\ln\lambda
\,+\,
b
\,-\,
c_{-2}(y)\,\lambda^{-1}
\,-\,
\frac{c_{-3}(y)}{2}\,\lambda^{-2}
\,-\,
\dots
\quad
\text{as}
\quad
\lambda\to+\infty,
\end{multline}
where $b$ is some constant.
Our Definition~\ref{definition of Weyl coefficients} is somewhat non-standard
as it is customary to call the coefficients
\[
\frac{c_{d-1}(y)}{d}\,,
\,\frac{c_{d-2}(y)}{d-1}\,,
\,\dots
\]
appearing in the asymptotic expansion
\eqref{expansion for mollified counting function} Weyl coefficients
rather than those in the asymptotic expansion
\eqref{expansion for mollified derivative of counting function}.
However, for the purposes of this paper we will stick with
Definition~\ref{definition of Weyl coefficients}.
This is the definition that was used in \cite{ASV}.

A separate question is whether one can get rid of the mollifier in \eqref{expansion for mollified counting function}.
It is known
\cite{Sa89,SaVa}
that under appropriate geometric conditions on loops we do indeed have
\begin{equation*}
\label{expansion for counting function}
N(y,\lambda)
=
\frac{c_{d-1}(y)}{d}\lambda^d
+
\frac{c_{d-2}(y)}{d-1}\lambda^{d-1}
+
o(\lambda^{d-1})
\quad
\text{as}
\quad
\lambda\to+\infty.
\end{equation*}
We do not discuss unmollified spectral asymptotics in the current paper.

The aim of this appendix is show that the small time expansion for the $g$-subprincipal symbol
of the propagator (Theorem~\ref{theorem small time}) allows us to recover in a straightforward manner the first three Weyl coefficients 
---
see also \cite{levitan}, \cite{avakumovic} and \cite{Ho68}
---
and that our result agrees with those obtained by the heat kernel method.

We have
\begin{equation}
\label{hyperbolic approach equation 1}
(N'*\mu)(y,\lambda)
\,=\,
\mathcal{F}^{-1}
\left[
\mathcal{F}
\left[
(N'*\mu)
\right]
\right](y,\lambda)
\,=\,
\mathcal{F}^{-1}
\left[
u(t,y,y)\,\hat\mu(t)
\right]\,,
\end{equation}
where $u$ is the Schwartz kernel \eqref{propagator}
of the propagator \eqref{definition of propagator}.
At  each point of the manifold
the quantity $u(t,y,y)$
is a distribution in the variable $t$ and
the construction presented in the main text of the papers allows us to write down this distribution
explicitly, modulo a smooth function.
Hence, formula \eqref{hyperbolic approach equation 1}
opens the way to the calculation of Weyl coefficients.

\begin{theorem}
\label{first three Weyl coefficients theorem}
The first three Weyl coefficients are
\begin{align}
\label{first three Weyl coefficients theorem formula 1}
&c_{d-1}(y)=\frac{S_{d-1}}{(2\pi)^d}\,,\\
\label{first three Weyl coefficients theorem formula 2}
&c_{d-2}(y)=0\,,\\
\label{first three Weyl coefficients theorem formula 3}
&c_{d-3}(y)=\frac{d-2}{12}\,\mathcal{R}(y)\,c_{d-1}(y)\,,
\end{align}
where
\begin{equation}
\label{first three Weyl coefficients theorem formula 4}
S_{d-1}=\dfrac
{2\pi^{d/2}}
{\Gamma(\frac{d}{2})}
\end{equation}
is the Riemannian volume of the $(d-1)$-dimensional unit sphere, $\mathcal{R}$ is scalar curvature and $\Gamma$ is the gamma function.
\end{theorem}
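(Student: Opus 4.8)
The plan is to extract the Weyl coefficients from the on-diagonal restriction $u(t,y,y)$ of the Schwartz kernel of the propagator, using the representation \eqref{wave kernel = lagrangian + smoothing}--\eqref{main oscillatory integral} with the real-valued Levi-Civita phase function ($\epsilon=0$), which is legitimate for small $t$ by the remark at the start of Section~\ref{Small time expansion for the subprincipal symbol}. First I would fix $y\in M$, work in geodesic normal coordinates centred at $y$, and set $x=y$ in the oscillatory integral. By Lemma~\ref{lemma about versions of Re PF} and \eqref{phase function in normal coordinates}, $\varphi(t,y;y,\eta)=-t\,h(y,\eta)+O(t^4)$ along the diagonal in these coordinates; more precisely, one needs the diagonal values of $\varphi$, $w$, and the symbol $\mathfrak a=1+\mathfrak a_{-1}+\dots$ to the order required to capture three powers of $\lambda$. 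Using \eqref{hyperbolic approach equation 1}, $(N'*\mu)(y,\lambda)=\mathcal{F}^{-1}[u(t,y,y)\hat\mu(t)]$, and since $\hat\mu\equiv 1$ near $t=0$ while the Weyl expansion is insensitive to the mollifier, the asymptotics as $\lambda\to+\infty$ are governed entirely by the behaviour of $u(t,y,y)$ near $t=0$.

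The key computation is then a stationary-phase / direct-Fourier-transform evaluation. Writing $u(t,y,y)\approx\int_{T^*_yM}\er^{-\ir t h(y,\eta)}\,\mathfrak a(t;y,\eta)\,w(t,y;y,\eta)\,\dbar\eta$ modulo smooth terms, I would pass to polar coordinates $\eta=r\omega$ with $r=|\eta|_g>0$ (so $h(y,r\omega)=r$ for $\omega$ on the unit cosphere) and carry out the radial integral, which produces, term by term in the homogeneous expansion of $\mathfrak a w$, distributions of the form $\partial_t^{\,j}\delta(t)$-type kernels whose inverse Fourier transforms are the successive powers $\lambda^{d-1},\lambda^{d-2},\lambda^{d-3},\dots$. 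The coefficient of $\lambda^{d-1}$ comes from the leading term $\mathfrak a_0=1$, $w|_{t=0}=1$, and the angular integral $\int_{S^{d-1}}\dbar\omega$, giving $c_{d-1}(y)=S_{d-1}/(2\pi)^d$ with $S_{d-1}$ as in \eqref{first three Weyl coefficients theorem formula 4}. The coefficient of $\lambda^{d-2}$ collects the first-order-in-$t$ contributions: the $O(t)$ part of $w$ along the diagonal and the $O(t)$ part of $\mathfrak a_{-1}$; one checks these produce an odd function of $\eta$ after the angular integration (or cancel by the vanishing of the subprincipal symbol of $\Delta$ and the normal-coordinate expansion of $\rho$), yielding $c_{d-2}(y)=0$. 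For $c_{d-3}(y)$ I would assemble the $O(t^2)$ diagonal contributions: the $t^2$-terms in $w$ (governed by $\mathrm{Ric}$ and $\mathcal R$ via \eqref{b1 small time step 3}, \eqref{w expansion for small t}), the $O(t)$-coefficient $\frac{\ir}{12 h}\mathcal R(y)$ of $\mathfrak a_{-1}$ from Theorem~\ref{theorem small time}, the $t^3$-correction to the phase, and the normal-coordinate expansion of the density; after the angular integral over $S^{d-1}$ (using $\int_{S^{d-1}}\omega_\mu\omega_\nu\,\dbar\omega=\frac{1}{d}\delta_{\mu\nu}\int_{S^{d-1}}\dbar\omega$) the Ricci contributions combine into scalar curvature with the stated numerical factor, giving $c_{d-3}(y)=\frac{d-2}{12}\mathcal R(y)\,c_{d-1}(y)$.

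The main obstacle I anticipate is bookkeeping: tracking \emph{all} contributions of a given order in $t$ — from the phase $\varphi$, the weight $w$, the symbol $\mathfrak a$, \emph{and} the Riemannian density $\rho(\eta)$ implicit in converting $\dbar\eta$ and $h$ to polar form — and making sure the cross-terms between, say, the $O(t)$ piece of $w$ and the $O(t)$ piece of $\mathfrak a_{-1}$ are correctly counted, while verifying that the $\chi$-cutoff and the continuation of $\varphi$ away from the diagonal only contribute smoothing errors that do not affect the power-law expansion. A secondary technical point is justifying rigorously the interchange of the $\eta$-integration with the inverse Fourier transform in $t$ and the asymptotic expansion as $\lambda\to+\infty$; this is standard (it is exactly the Fourier Tauberian framework of \cite{SaVa}, \cite{DuGu}), so I would invoke it rather than reprove it. Once the three orders are isolated, the remaining work is the elementary angular integrals and the identification $\int_{S^{d-1}}\dbar\omega = S_{d-1}/(2\pi)^d$, and a final cross-check that \eqref{first three Weyl coefficients theorem formula 3} reproduces the classical heat-kernel value $\frac{1}{6}\mathcal R$ after the standard combinatorial factors relating Definition~\ref{definition of Weyl coefficients} to the usual normalisation.
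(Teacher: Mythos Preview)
Your overall strategy matches the paper's: restrict the oscillatory integral to the diagonal in normal coordinates with $\epsilon=0$ and read off coefficients via \eqref{hyperbolic approach equation 1}. The bookkeeping, however, is off in a way that matters. On the diagonal $x=y$, formulae \eqref{b1 small time step 2} and \eqref{w expansion for small t} with $x=0$ give $w=1+O(t^3)$, and \eqref{phase function small time} gives $\varphi=-\|\eta\|\,t+O(t^4)$; so there are \emph{no} $O(t)$ or $O(t^2)$ terms in $w$, no $t^3$ phase correction, no density factor ($\rho(y)=1$), and none of the Ricci cross-terms you anticipate. More importantly, a term $t^j\|\eta\|^{-k}$ in the symbol, combined with the radial Jacobian $r^{d-1}$, contributes to $\lambda^{d-1-k-j}$, not $\lambda^{d-1-j}$: use $t\,\er^{\ir(\lambda-r)t}=\ir\,\partial_r\er^{\ir(\lambda-r)t}$ and integrate by parts in $r$. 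Hence the linear-in-$t$ part of $\mathfrak a_{-1}$, namely $\frac{\ir\,\mathcal R}{12\|\eta\|}\,t$, is the \emph{sole} source of $c_{d-3}$, not a contributor to $c_{d-2}$; it is even in $\eta$ and certainly does not vanish under angular integration. The vanishing of $c_{d-2}$ is not a parity cancellation at all: it holds simply because $\mathfrak a_{-1}(0;y,\eta)=0$ (Theorem~\ref{theorem subprincipal identity FIO}) and nothing else sits at that level.

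Once this is straightened out the computation collapses. In spherical coordinates
\[
(N'*\mu)(y,\lambda)=\frac{S_{d-1}}{(2\pi)^{d+1}}\int_{\mathbb{R}^{2}}\Bigl(r^{d-1}+\tfrac{\ir\,\mathcal R}{12}\,t\,r^{d-2}\Bigr)\,\er^{\ir(\lambda-r)t}\,\hat\mu(t)\,\chi(r)\,\dr r\,\dr t+O(\lambda^{d-4}),
\]
and a single integration by parts in $r$ on the second term produces $\frac{d-2}{12}\mathcal R\,r^{d-3}$, from which \eqref{first three Weyl coefficients theorem formula 1}--\eqref{first three Weyl coefficients theorem formula 3} follow directly.
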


\begin{proof}
Our task is to substitute
\eqref{wave kernel = lagrangian + smoothing},
\eqref{main oscillatory integral}
into
$\,\mathcal{F}^{-1}
\left[
u(t,y,y)\,\hat\mu(t)
\right]\,$
and expand the resulting quantity in powers of $\lambda$ as $\lambda\to+\infty$.
The smooth term $\mathcal{K}$ from
\eqref{wave kernel = lagrangian + smoothing}
does not affect the asymptotic expansion,
so the problem reduces to the analysis of an explicit integral in $d+1$ variables depending on the parameter $\lambda$.
In what follows we fix a point on the manifold and drop the $y$ in our intermediate calculations.
As in the proof of Theorem~\ref{symbol identity Levi-Civita epsilon not 0},
we work in geodesic normal coordinates centred at our chosen point.

The construction presented in the main text of the paper
tells us that the only singularity of the distribution $\,u(t,y,y)\,\hat\mu(t)\,$ is at $t=0$.
Hence, in what follows, we can assume that the support of $\hat\mu$ is arbitrarily small.
In particular, this allows us to use the real-valued ($\epsilon=0$) Levi-Civita phase function.

We have
\begin{equation}
\label{first three Weyl coefficients theorem proof equation 1}
\mathfrak{a}_0(t,\eta)=1
\end{equation}
and, by Theorem~\ref{theorem small time},
\begin{equation}
\label{first three Weyl coefficients theorem proof equation 2}
\mathfrak{a}_{-1}(t,\eta)=\dfrac{\ir}{12\,\|\eta\|}\,\mathcal{R}\,t+ O(t^2)\,.
\end{equation}
The lower order terms $\mathfrak{a}_{-2},\mathfrak{a}_{-3},\ldots$ in the expansion
\eqref{homogeneous asymptotic expansion of symbol}
do not affect the first three Weyl coefficients and neither does the remainder term in
\eqref{first three Weyl coefficients theorem proof equation 2},
so further on we assume that the full symbol of the propagator reads
\begin{equation}
\label{first three Weyl coefficients theorem proof equation 3}
\mathfrak{a}(t,\eta)=1+\dfrac{\ir}{12\,\|\eta\|}\,\mathcal{R}\,t\,.
\end{equation}

Using formula
\eqref{phase function small time}
with $x=y$ we get
\begin{equation}
\label{first three Weyl coefficients theorem proof equation 4}
\varphi(t,\eta)=-\|\eta\|\,t+O(t^4)\,.
\end{equation}
Replacing $\er^{\ir\,\varphi(t,\eta)}$ by $\er^{-\ir\,\|\eta\|\,t}$
in the oscillatory integral
\eqref{main oscillatory integral}
does not affect the first three Weyl coefficients: this fact is established by
using  \eqref{first three Weyl coefficients theorem proof equation 4} and expanding
$\er^{O(t^4)}$ into a power series, with account of the fact that this $O$-term is positively homogeneous in $\eta$ of degree one
(a similar argument was used in the proofs of Theorems
\ref{theorem subprincipal identity FIO}
and
\ref{symbol identity Levi-Civita epsilon not 0}).
Hence, further on we assume that
\begin{equation}
\label{first three Weyl coefficients theorem proof equation 5}
\er^{\ir\varphi(t,\eta)}=\er^{-\ir\|\eta\|t}\,.
\end{equation}

Using formula
\eqref{b1 small time step 2}
with $x=y$ we get
\begin{equation}
\label{first three Weyl coefficients theorem proof equation 6}
\varphi_{x^\alpha\eta_\beta}(t,\eta)=\delta_\alpha{}^\beta+O(t^3)\,.
\end{equation}
Substitution of \eqref{first three Weyl coefficients theorem proof equation 6}
into
\eqref{weight w}
gives us
\begin{equation}
\label{first three Weyl coefficients theorem proof equation 7}
w(t,\eta)=1+O(t^3)\,.
\end{equation}
The remainder term in \eqref{first three Weyl coefficients theorem proof equation 7}
does not affect the first three Weyl coefficients,
so further on we assume that
\begin{equation}
\label{first three Weyl coefficients theorem proof equation 8}
w(t,\eta)=1\,.
\end{equation}

Substituting
\eqref{first three Weyl coefficients theorem proof equation 3},
\eqref{first three Weyl coefficients theorem proof equation 5}
and
\eqref{first three Weyl coefficients theorem proof equation 8}
into
\eqref{main oscillatory integral},
we conclude that formula
\eqref{hyperbolic approach equation 1}
can now be rewritten as
\begin{equation}
\label{first three Weyl coefficients theorem proof equation 9}
\begin{split}
(N'*\mu)(y,\lambda)
&=
\frac1{2\pi}\int_{\mathbb{R}^{d+1}}
\left(
1+\dfrac{\ir}{12\,\|\eta\|}\,\mathcal{R}\,t
\right)
\er^{\ir(\lambda-\|\eta\|)t}\,\hat\mu(t)\,\chi(\|\eta\|)\,\dbar\eta\,\dr t\,
\\
&+\,O(\lambda^{d-4} ).
\end{split}
\end{equation}
Here $\chi\in C^\infty(\mathbb{R})$ is a cut-off such that
$\chi(r)=0$ for $r\le1/2$
and
$\chi(r)=1$ for $r\ge1$.

Switching to spherical coordinates in $\mathbb{R}^d$, we rewrite \eqref{first three Weyl coefficients theorem proof equation 9} as
\begin{multline}
\label{first three Weyl coefficients theorem proof equation 10}
(N'*\mu)(y,\lambda)
=\\
\frac{S_{d-1}}{(2\pi)^{d+1}}\int_{\mathbb{R}^{2}}
\left(
r^{d-1}+\dfrac{\ir}{12}\,\mathcal{R}\,r^{d-2}\,t
\right)
\er^{\ir(\lambda-r)t}\,\hat\mu(t)\,\chi(r)\,\dr r\,\dr t\,
+\,O(\lambda^{d-4} ).
\end{multline}
Here $r$ is the radial coordinate and the extra factor $(2\pi)^d$ in the denominator came from \eqref{definition of dbar}.

Observe that
\[
t\,\er^{\ir(\lambda-r)t}=\ir\frac\partial{\partial r}\er^{\ir(\lambda-r)t}\,,
\]
so integrating by parts in \eqref{first three Weyl coefficients theorem proof equation 10} we simplify this formula to read
\begin{equation}
\label{first three Weyl coefficients theorem proof equation 11}
(N'*\mu)(y,\lambda)
=
\frac{S_{d-1}}{(2\pi)^{d+1}}\int_{\mathbb{R}^{2}}
r^{d-1}\,
\er^{\ir(\lambda-r)t}\,\hat\mu(t)\,\chi(r)\,\dr r\,\dr t\,
+\,O(\lambda^{d-4} )
\end{equation}
for $d=2$ and
\begin{equation}
\label{first three Weyl coefficients theorem proof equation 12}
\begin{split}
(N'*\mu)(y,\lambda)
&=
\frac{S_{d-1}}{(2\pi)^{d+1}}\int_{\mathbb{R}^{2}}
\left(
r^{d-1}+\dfrac{d-2}{12}\,\mathcal{R}\,r^{d-3}
\right)
\er^{\ir(\lambda-r)t}\,\hat\mu(t)\,\chi(r)\,\dr r\,\dr t\,
\\
&+\,O(\lambda^{d-4} )
\end{split}
\end{equation}
for $d\ge3$.

It remains only to drop the cut-off $\chi$ in formulae
\eqref{first three Weyl coefficients theorem proof equation 11}
and
\eqref{first three Weyl coefficients theorem proof equation 12}
as this does not affect the asymptotics when $\lambda\to+\infty$
and to make use of the formula
\[
\frac1{2\pi}
\int_{\mathbb{R}^{2}}
r^m\,
\er^{\ir(\lambda-r)t}\,\hat\mu(t)\,\dr r\,\dr t
=\lambda^m,
\]
which holds for $m=0,1,2,\ldots$.

We see that formulae
\eqref{first three Weyl coefficients theorem proof equation 11}
and
\eqref{first three Weyl coefficients theorem proof equation 12}
give us
\eqref{first three Weyl coefficients theorem formula 1}--\eqref{first three Weyl coefficients theorem formula 3}.
\end{proof}

\

As a final step, let us show that Theorem~\ref{first three Weyl coefficients theorem} agrees with the classical heat kernel expansion.
To this end, let us introduce the (local) heat  trace
\begin{equation}
\label{heat kernel 1}
Z(y,t):=
\int_{-\infty}^{+\infty}\er^{-t\lambda^2}\,N'(y,\lambda)\,\dr\lambda\,
=
\int_0^{+\infty}\er^{-t\lambda^2}\,N'(y,\lambda)\,\dr\lambda\,
+
\,\frac1{\operatorname{Vol}(M,g)}\,.
\end{equation}
If we now replace $\,N'(y,\lambda)\,$ in formula \eqref{heat kernel 1} with its mollified version
$\,(N'*\mu)(y,\lambda)\,$ this gives an error, but this error can be easily estimated:
\begin{equation}
\label{heat kernel 2}
Z(y,t)=
\int_0^{+\infty}\er^{-t\lambda^2}\,(N'*\mu)(y,\lambda)\,\dr\lambda\,+\,O(1)\quad\text{as}\quad t\to 0^+.
\end{equation}

Substituting
\eqref{expansion for mollified derivative of counting function}
and
\eqref{first three Weyl coefficients theorem formula 1}--\eqref{first three Weyl coefficients theorem formula 3}
into
\eqref{heat kernel 2}, we get
\begin{equation}
\label{heat kernel 3}
Z(y,t)=
c_{d-1}(y)
\int_0^{+\infty}\er^{-t\lambda^2}\,
\lambda^{d-1}
\,\dr\lambda\,+\,O(1)\quad\text{as}\quad t\to 0^+
\end{equation}
for $d=2$,
\begin{multline}
\label{heat kernel 4}
Z(y,t)=
\int_0^{+\infty}\er^{-t\lambda^2}
\left(
c_{d-1}(y)\,\lambda^{d-1}
+
c_{d-3}(y)\,\lambda^{d-3}
\right)
\dr\lambda\,+\,O(|\ln t|)\\
\text{as}\quad t\to 0^+
\end{multline}
for $d=3$,
and
\begin{multline}
\label{heat kernel 5}
Z(y,t)=
\int_0^{+\infty}\er^{-t\lambda^2}
\left(
c_{d-1}(y)\,\lambda^{d-1}
+
c_{d-3}(y)\,\lambda^{d-3}
\right)
\dr\lambda\,+\,O\bigl(t^{(3-d)/2}\bigr)\\
\text{as}\quad t\to 0^+
\end{multline}
for $d\ge4$.

We have
\begin{equation}
\label{heat kernel 6}
\int_{0} ^{+\infty}\er^{-z^2} \,z^{d-1}\,\dr z\,=\,\dfrac{\Gamma(\frac{d}{2})}{2}\,.
\end{equation}
We also have
\begin{equation}
\label{heat kernel 7}
\int_{0} ^{+\infty}\er^{-z^2} \,z^{d-3}\,\dr z\,=\,\dfrac{\Gamma(\frac{d}{2}-1)}{2}
\,= \dfrac{\Gamma(\frac{d}{2})}{d-2}
\end{equation}
for $d\ge3$.

Using
\eqref{first three Weyl coefficients theorem formula 1}--\eqref{first three Weyl coefficients theorem formula 4},
\eqref{heat kernel 6}
and
\eqref{heat kernel 7}
we can rewrite formulae
\eqref{heat kernel 3}--\eqref{heat kernel 5}
as a single formula
\begin{equation}
\label{heat kernel 8}
Z(y,t)=
\begin{cases}
(4\pi t)^{-d/2}
+O(1)\quad\text{for}\quad d=2,
\\
(4\pi t)^{-d/2}
\left(
1+\frac16\,\mathcal{R}(y)\,t
\right)
+O(|\ln t|)\quad\text{for}\quad d=3,
\\
(4\pi t)^{-d/2}
\left(
1+\frac16\,\mathcal{R}(y)\,t
\right)
+O\bigl(t^{(3-d)/2}\bigr)\quad\text{for}\quad d\ge4
\end{cases}
\end{equation}
as $t\to 0^+$.

It is known 
\cite{minak},
\cite[Ch.~III, E.IV.]{berger},
\cite[Section~3.3]{rosenberg},
that for all $d\ge2$ the heat trace admits the expansion
\begin{equation}
\label{heat kernel 9}
Z(y,t)=
(4\pi t)^{-d/2}
\left(
1+\frac16\,\mathcal{R}(y)\,t
\right)
+O\bigl(t^{(4-d)/2}\bigr)
\quad\text{as}\quad t\to 0^+.
\end{equation}
We see that our result \eqref{heat kernel 8} agrees with the classical formula  \eqref{heat kernel 9}.

\end{appendices}


\begin{thebibliography}{ChDoVa}

\bibitem[Ava]{avakumovic}
V.~G.~Avakumovic, 
\emph{{\"U}ber die Eigenfunktionen auf geschlossenen Riemannschen Mannigfaltigkeiten}, 
Math. Z. \textbf{65} (1956) 327--344.

\bibitem[AvFaVa]{AFV}
Z.~Avetisyan, Y.-L.~Fang and D.~Vassiliev,
\emph{Spectral asymptotics for first order systems},
J.~Spectr.~Theory \textbf{6}  no.~4 (2016) 695--715.

\bibitem[AvSjVa]{ASV}
Z.~Avetisyan, J.~Sj\"ostrand and D.~Vassiliev,
\emph{The second Weyl coefficient for a first order system},
in: \emph{Analysis as a tool in mathematical physics}, P.~Kurasov, A.~Laptev, S.~Naboko and B.~Simon (Eds.), Operator Theory: Advances and Applications \textbf{276}, Birkh\"auser Verlag (2020) 120--153. 

\bibitem[B\"a]{bar}
C.~B\"ar, N.~Ginoux and F.~Pf\"affle,
\emph{Wave equation on Lorentzian manifolds and quantization}.
 ESI Lectures in Mathematics and Physics, European Mathematical Society, Z\"{u}rich, 2007.

\bibitem[B{\'e}]{berard}
P.~H.~B{\'e}rard,
\emph{On the wave equation on a compact Riemannian manifold
without conjugate points},
Math.~Z. \textbf{155} (1977) 249--276. 

\bibitem[BeGaMa]{berger}
M.~Berger, P.~Gauduchon and E.~Mazet,
\emph{Le spectre d'une vari\'et\'e Riemannienne}.
Lecture Notes in Mathematics {\bf 194}, Springer-Verlag, Berlin-New York, 1971.

\bibitem[Bo]{boutet}
L.~Boutet de Monvel,
\emph{Convergence dans le domaine complexe des s\'eries de fonctions propres},
C. R. Acad. Sci. Paris S\'er. A-B \textbf{287} no.~13 (1978) A855--A856. 

\bibitem[BuOl]{bunke}
U.~Bunke and M.~Olbrich, 
\emph{The wave kernel for the Laplacian on the classical
locally symmetric spaces of rank one, theta functions, trace formulas and the Selberg zeta function}. With an Appendix by A.~Juhl, 
Ann.~Glob.~Anal.~Geom. \textbf{12} (1994) 357--405.

\bibitem[CaHa]{canzani}
Y.~Canzani, B.~Hanin, 
\emph{Scaling limit for the kernel of the spectral projector and remainder estimates in the pointwise Weyl law},
Anal.~PDE \textbf{8} no.~7 (2015) 1707--1731.

\bibitem[ChTa]{cheeger}
J.~Cheeger and M.~Taylor,
\emph{On the diffraction of waves by conical singularities. I},
Comm. Pure Appl. Math. \textbf{35} no.~3 (1982) 275--331. 

\bibitem[ChDoVa]{CDV}
O.~Chervova, R.~J.~Downes and D.~Vassiliev,
\emph{The spectral function of a first order elliptic system},
J. Spectr. Theory \textbf{3} no.~3 (2013) 317--360. 

\bibitem[CoDoSc]{CDS}
S.~Coriasco, M.~Doll and R.~Schulz,
\emph{Lagrangian distributions on asymptotically Euclidean manifolds},
Ann.~Mat.~Pura~Appl. \textbf{198} no.~5 (2019) 1731--1780.

\bibitem[CoSc]{CoSh}
S.~Coriasco and R.~Schulz,
\emph{Lagrangian submanifolds at infinity and their parametrization},
J. Symplectic Geom. \textbf{15} no.~4 (2017) 937--982. 

\bibitem[DeLaSi]{DLS}
J.~Derezi\'nski, A.~Latosi\'nski and D.~Siemssen,
\emph{Geometric pseudodifferential calculus on (pseudo-)Riemannian manifolds},
Ann.~Henri Poincar\'e \textbf{21} (2020) 1595--1635.

\bibitem[Du]{Dui}
J.~J.~Duistermaat,
\emph{Fourier integral operators}.
Progress in Mathematics, Birkh\"auser Boston, 1996.

\bibitem[DuGu]{DuGu}
J.~J.~Duistermaat and V.~W.~Guillemin,
\emph{The spectrum of positive elliptic operators and periodic bicharacteristics},
Invent. Math. \textbf{29} no.~1 (1975) 39--79.

\bibitem[DuH{\"{o}}]{DuHo}
J.~J.~Duistermaat and L.~H\"ormander,
\emph{Fourier integral operators. II},
Acta Math. \textbf{128} no.~3-4 (1972) 183--269.

\bibitem[Ha]{Had}
J.~Hadamard,
\emph{Lectures on Cauchy's problem in linear partial differential equations}. 
Dover Publications, New York, 1953.

\bibitem[H{{\"o}}68]{Ho68}
L.~H\"ormander,
\emph{The spectral function of an elliptic operator},
Acta Math. \textbf{121} (1968) 193--218.

\bibitem[H{{\"o}}71]{Ho71}
L.~H\"ormander,
\emph{Fourier integral operators. I},
Acta Math. \textbf{127} no.~1-2  (1971) 79--183.

\bibitem[H{{\"o}}]{Hor}
L.~H\"ormander,
\emph{The analysis of linear partial differential operators. I}.
Reprint of the second (1990) edition. Classics in Mathematics. Springer-Verlag, Berlin, 2003;
\emph{III}. Reprint of the 1994 edition. Classics in Mathematics. Springer-Verlag, Berlin, 2007; \emph{IV}. Reprint of the 1994 edition. Classics in Mathematics. Springer-Verlag, Berlin, 2009.



\bibitem[Iv80]{Ivr80}
V.~Ivrii,
\emph{Second term of the spectral asymptotic expansion of the Laplace--Beltrami operator on manifolds with boundary},
Funct. Anal. Appl. \textbf{14} (1980) 98--106.

\bibitem[Iv84]{Ivr84}
V.~Ivrii,
\emph{Precise spectral asymptotics for elliptic operators
acting in fiberings over manifolds with boundary}.
Lecture Notes in Mathematics \textbf{1100}, Springer-Verlag, Berlin, 1984.

\bibitem[Iv98]{Ivr98}
V.~Ivrii,
\emph{Microlocal analysis and precise spectral asymptotics}.
Springer-Verlag, Berlin, 1998.

\bibitem[JaSaSt]{jakobson}
D.~Jakobson, Yu.~Safarov and A.~Strohmaier,
\emph{The semiclassical theory of discontinuous systems and ray-splitting billiards.
With an appendix by Yves Colin de Verdi\`ere}, 
Amer.~J.~Math. \textbf{137} no.~4 (2015) 859--906.

\bibitem[LaSaVa]{LSV}
A.~Laptev, Yu.~Safarov and D.~Vassiliev,
\emph{On global representation of Lagrangian distributions and solutions of hyperbolic equations},
Comm.~Pure~Appl. Math. \textbf{47} no.~11 (1994) 1411--1456.

\bibitem[LaSi]{LaSi}
A.~Laptev and I.~M.~Sigal,
\emph{Global Fourier integral operators and semiclassical asymptotics},
Rev. Math. Phys. \textbf{12} no.~5  (2000) 749--766.

\bibitem[Leb]{lebeau}
G.~Lebeau,
\emph{A proof of a result of L.~Boutet de Monvel},
in: \emph{Algebraic and Analytic Microlocal Analysis}, M.~Hitrik, D.~Tamarkin, B.~Tsygan, S.~Zelditch (eds). Springer Proceedings in Mathematics and Statistics, Springer-Verlag (2018) 589--634.

\bibitem[Lee]{Lees}
J.~A.~Lees,
\emph{Defining Lagrangian immersions by phase functions},
Trans. Amer. Math. Soc. \textbf{250} (1979) 213--222.

\bibitem[Lev]{levitan}
B.~M.~Levitan,
\emph{On the asymptotic behaviour of the spectral function of a self-adjoint differential second order equation},
Izv. Akad. Nauk SSSR Ser. Mat. \textbf{16} (1952) 325--352.

\bibitem[McSa]{McSa}
P.~McKeag and Yu.~Safarov,
\emph{Pseudodifferential operators on manifolds: a coordinate-free approach},
in: \emph{Partial Differential Equations and Spectral Theory},
series \emph{Operator Theory: Advances and Applications} \textbf{211}
(2011), 321--341.

\bibitem[MeSj]{MeSj}
A.~Melin and J.~Sj\"ostrand,
\emph{Fourier integral operators with complex phase functions and parametrix for an interior boundary value problem},
Comm. Part. Diff. Eq. \textbf{1} no.~4 (1976) 313--400.

\bibitem[MiPl]{minak}
S.~Minakshisundaram and {{\AA}}.~Pleijel,
\emph{Some properties of the eigenfunctions of the Laplace-operator on Riemannian manifolds},
Canadian J. Math. \textbf{1} (1949) 242--256. 


\bibitem[Ra]{ralston}
J.~Ralston, 
\emph{Gaussian beams and the propagation of singularities}, 
Studies in partial differential equations, MAA Stud.~Math.~\textbf{23} (1982) 206--248.


\bibitem[Re]{rellich}
F.~Rellich,
\emph{Perturbation theory of eigenvalue problems}.
Courant Institute of Mathematical Sciences, New York University, 1954.

\bibitem[Ri49]{Ri49}
M.~Riesz,
\emph{L'int{\'e}grale de Riemann-Liouville et le probl{\`e}me de Cauchy},
Acta Math. \textbf{81} (1949) 1--223.

\bibitem[Ri60]{Ri60}
M.~Riesz,
 \emph{A geometric solution of the wave equation in space-time of even dimension},
Comm. Pure Appl. Math. \textbf{13} (1960) 329--351.

\bibitem[Rob]{robert}
D.~Robert,
\emph{On the Herman-Kluk semiclassical approximation},
Rev.~Math.~Phys. \textbf{22} no. 10 (2010) 1123--1145.

\bibitem[Ros]{rosenberg}
S.~Rosenberg,
\emph{The Laplacian on a Riemannian manifold.}
London Mathematical Society Student Texts \textbf{31}, Cambridge University Press, Cambridge, 1997.

\bibitem[Sa89]{Sa89}
Yu.~Safarov,
\emph{Non-classical two-term spectral asymptotics for self-adjoint elliptic operators}.
DSc~thesis,
Leningrad Branch of the Steklov Mathematical Institute of the
USSR Academy of Sciences, 1989.
In Russian.

\bibitem[Sa14]{Sa14}
Yu.~Safarov,
\emph{A Symbolic calculus for Fourier integral operators},
in: \emph{Geometric and Spectral Analysis}, 
P.~Albin, D.~Jakobson, F.~Rochon (Eds.), \emph{Contemporary Mathematics} \textbf{630}, American Mathematical Society (2014)  275--290.

\bibitem[SaVa]{SaVa}
Yu.~Safarov and D.~Vassiliev,
\emph{The asymptotic distribution of eigenvalues of partial differential operators}.
Amer.~Math.~Soc., Providence (RI), 1997, 1998.

\bibitem[Sh]{Shu}
M.~A.~Shubin,
\emph{Pseudodifferential operators and spectral theory}.
Translated from the 1978 Russian Edition. Second edition. Springer-Verlag, Berlin, 2001.

\bibitem[Sm]{smyshlyaev}
V.~P.~Smyshlyaev,
\emph{Diffraction by conical surfaces at high frequencies},
Wave Motion \textbf{12} no.~4 (1990) 329--339. 

\bibitem[St]{stenzel}
M.~B.~Stenzel,
\emph{On the analytic continuation of the Poisson Kernel},
Manuscripta Math. \textbf{144} (2014)  253--276.

\bibitem[Ta]{taylor}
M.~E.~Taylor,
\emph{Noncommutative harmonic analysis}.
Mathematical Surveys and Monographs {\bf 22}. American Mathematical Society, Providence, RI, 1986.

\bibitem[Tr]{Tr}
F.~Tr\`eves,
\emph{Introduction to pseudodifferential and Fourier integral operators}. Vol.~1 \& 2.
The University Series in Mathematics. Plenum Press, New York-London, 1980.

\bibitem[Ze07]{Zel1}
S.~Zelditch,
\emph{Complex zeros of real ergodic eigenfunctions},
Invent.~Math. \textbf{167} no.~2 (2007) 419--443.

\bibitem[Ze09]{Zel4}
S.~Zelditch, 
\emph{Real and complex zeros of Riemannian random waves}, in: \emph{Spectral Analysis in Geometry and Number Theory},  M.~Kotani, H.~Naito and T.~Tate (Eds.), \emph{Contemporary Mathematics} {\bf 14}, American Mathematical Society, 2009, 321--342.

\bibitem[Ze12]{Zel3}
S.~Zelditch,
\emph{Pluri-potential theory on Grauert tubes of real analytic Riemannian manifolds, I}, 
in: \emph{Spectral geometry},  A.~H.~Barnett, C.~S.~Gordon, P.~A.~Perry, A.~Uribe (Eds.),
\emph{Proc.~Sympos.~Pure Math.} {\bf 84}, Amer.~Math.~Soc., Providence, RI, 2012, 299--339.

\bibitem[Ze14]{Zel2}
S.~Zelditch,
\emph{Ergodicity and intersections of nodal sets and geodesics on real analytic surfaces},
J.~Differential Geom. \textbf{96} no.~2 (2014) 305--351.
\end{thebibliography}
\end{document}